\title[Affine Quermassintegrals and Even Minkowski Valuations]{Af{}f{}ine Quermassintegrals and \\ Even Minkowski Valuations}
\author{Georg C.\ Hofstätter}
\address{Institute of Discrete Mathematics and Geometry, TU Wien, 1040 Vienna, Austria}
\email{georg.hofstaetter@tuwien.ac.at}
\author{Philipp Kniefacz}
\author{Franz E.\ Schuster}
\email{franz.schuster@tuwien.ac.at}
\theoremstyle{plain}
\newtheorem{thmintro}{Theorem}
\newtheorem*{thmintro*}{Theorem}
\newtheorem{propintro}[thmintro]{Proposition}
\newtheorem{corintro}[thmintro]{Corollary}
\newtheorem*{corintro*}{Corollary}
\theoremstyle{plain}
\newtheorem{lem}{Lemma}[section]
\newtheorem{prop}[lem]{Proposition}
\newtheorem{thm}[lem]{Theorem}
\newtheorem*{thm*}{Theorem}
\theoremstyle{definition}
\theoremstyle{remark}
\newtheorem{exl}[lem]{Example}
\newtheorem{rem}[lem]{Remark}
\numberwithin{equation}{section}
\crefname{equation}{}{}
\crefname{lem}{Lemma}{Lemmas}
\crefname{thm}{Theorem}{Theorems}
\crefname{thmintro}{Theorem}{Theorems}
\crefname{rem}{Remark}{Remarks}
\crefname{figure}{Figure}{Figures}
\newcommand{\N}{\mathbb{N}}			
\newcommand{\R}{\mathbb{R}}			
\renewcommand{\S}{\mathbb{S}}		
\newcommand{\Gr}{\mathrm{Gr}}		
\newcommand{\K}{\mathcal{K}}		
\newcommand{\Val}{\mathbf{Val}}		
\DeclareMathOperator{\SO}{SO}		
\DeclareMathOperator{\OO}{O}		
\newcommand{\pr}{\mathrm{pr}}								
\newcommand{\pair}[2]{\langle #1,#2 \rangle}				
\DeclareMathOperator{\Cr}{Cr}
\DeclareMathOperator{\Kl}{Kl}
\DeclareMathOperator{\pd}{pd}
\begin{document}
	
	\begin{abstract}
		It is shown that each continuous even Minkowski valuation on convex bodies of degree $1 \leq i \leq n - 1$ intertwining rigid motions is obtained from convolution of the $i$th~projection function with a unique spherical Crofton distribution. In case of a non-negative distribution, the polar volume of the associated Minkowski valuation gives rise to an isoperimetric inequality which strengthens the classical relation between the $i$th quermassintegral and the volume. This large family of inequalities unifies earlier results obtained for $i = 1$ and $n - 1$. In these cases, isoperimetric inequalities for affine quermassintegrals, specifically the Blaschke--Santal\'o inequality for $i = 1$ and the Petty projection inequality for $i = n - 1$, were proven to be the strongest inequalities. An analogous result for the intermediate degrees is established here. Finally, a new sufficient condition for the existence of maximizers for the polar volume of Minkowski valuations intertwining rigid motions reveals unexpected examples of volume inequalities having asymmetric extremizers.
	\end{abstract}
	
	\maketitle
	\thispagestyle{empty}
	
	\section{Introduction}
	\label{sec:intro}
	
	The classical theory of convex bodies, often referred to as the Brunn--Minkowski theory, arises naturally from the interplay between
	Minkowski addition and volume. The variations of volume, which include surface area and mean width, are the fundamental geometric functionals of the theory and called \emph{quermassintegrals}.
	When viewed as coefficients in Weyl's tube formula, the quermassintegrals appear in differential geometry as integrals of intermediate mean curvatures of convex hypersurfaces. They are also central to various integral geometric formulas, such as the principal kinematic, Crofton's, or Kubota's formula. The latter allows to compute quermassintegrals of a convex body $K$ (a compact, convex set) in $\mathbb{R}^n$, where we assume $n \geq 3$ throughout, from means of its projection functions,
	\begin{align*}
		W_{n-i}(K) = \frac{\kappa_n}{\kappa_i}\int_{\mathrm{Gr}_{i,n}}\!\! V_i(K|E)\,dE, \qquad i = 0, \ldots, n.
	\end{align*}
	The integration here is with respect to the invariant probability measure on the Grassmannian $\mathrm{Gr}_{i,n}$ of $i$-dimensional subspaces of $\mathbb{R}^n$,
	$V_i(K|E)$ is the $i$-dimensional volume of the orthogonal projection of $K$ onto $E \in \mathrm{Gr}_{i,n}$, and $\kappa_i = V_i(B^i)$ with $B^i$ denoting the unit ball in $\mathbb{R}^i$.
	Note that $W_n(K) = \kappa_n$ and $W_0(K) = V_n(K)$.
	
	One of the basic classical inequalities for quermassintegrals of a convex body $K \subseteq \mathbb{R}^n$ with non-empty interior relates $W_{n-i}(K)$, $1 \leq i \leq n - 1$, to the volume of $K$ in the following way
	(see, e.g., \cite{Schneider2014}*{p.\ 401}),
	\begin{equation} \label{WiVninequ}
		W_{n-i}(K)^n \geq \kappa_n^{n-i}\, V_n(K)^i
	\end{equation}
	with equality if and only if $K$ is a Euclidean ball.
	
	
	More recently, conceptually more involved projection inequalities that directly imply (\ref{WiVninequ}) were established by Petty~\cite{Petty1971} for $i = n - 1$ and Lutwak~\cite{Lutwak1985b} for $i = 1, \ldots, n - 2$. In order to explain their improvement of (\ref{WiVninequ}), recall that a convex body $K \subseteq \mathbb{R}^n$ is determined by its support function $h(K,u) = \max\{\pair{u}{x}: x \in K\}$ for $u \in \mathbb{S}^{n-1}$ and that, if $K$ contains the origin in its interior, the polar body of $K$ is given by $K^\circ = \{x \in \mathbb{R}^n: h(K,x) \leq 1\}$. For $K$ with non-empty interior and $1 \leq i \leq n - 1$, the Lutwak--Petty projection inequalities are given by the right-hand side in the chain of inequalities,
	\begin{equation} \label{WVnPiiVnkette}
		W_{n-i}(K)^n \geq \frac{\kappa_n^{n+1}}{\kappa_{n-1}^n} V_n(\Pi_i^\circ K)^{-1} \geq \kappa_n^{n-i}\, V_n(K)^i
	\end{equation}
	with equality on the right if and only if $K$ is an ellipsoid when $i = n - 1$, and a Euclidean ball when $i \leq n - 2$. Here, $\Pi_i^\circ K$ is the polar of the projection body of order $i$ of $K$, defined by $h(\Pi_iK,u) = c_{n,i} W_{n-i}(K|u^\perp)$, where $c_{n,i} > 0$ is usually chosen such that $\Pi_iB^n = \kappa_{n-1} B^n$. The left-hand inequalities in \eqref{WVnPiiVnkette} are due to
	Lutwak \cites{Lutwak1984, Lutwak1985b} with equality if and only if $\Pi_iK$ is a Euclidean ball.
	
	For $i = n - 1$, the quantity $V_n(\Pi_i^\circ K)^{-1/n}$ is (up to a factor) one of the \emph{affine quermassintegrals}, first defined by Lutwak~\cite{Lutwak1984} for a convex body $K \subseteq \R^n$ with non-empty interior by letting $A_n(K) = \kappa_n$, $A_0(K) = V_n(K)$, and for $ 1 \leq i \leq n - 1$,
	\[A_{n-i}(K) = \frac{\kappa_n}{\kappa_i}\!\left (\int_{\mathrm{Gr}_{i,n}}\!\! V_i(K|E)^{-n}\,dE\right)^{\!\!-1/n}.   \]
	Note that while the quermassintegrals $W_{n-i}$, $1 \leq i \leq n - 1$, are merely invariant under rigid motions of $\R^n$, every $A_{n-i}$ is invariant under all volume-preserving affine transformations, as was shown by Grinberg~\cite{Grinberg1991}. A major problem in affine convex geometry, first posed by Lutwak~\cite{Lutwak1988b}, was to obtain a sharp lower bound on $A_{n-i}(K)$, $1 \leq i \leq n - 1$, analogous to \eqref{WiVninequ},
	\begin{equation} \label{Lutwakconjec}
		A_{n-i}(K)^n \geq  \kappa_n^{n-i}\, V_n(K)^i
	\end{equation}
	with equality if and only if $K$ is an ellipsoid. Asymptotic confirmations of \eqref{Lutwakconjec} were obtained in \cites{Dafnis2012, Paouris2013}. Apart from these, only the rank-one cases $i = 1$ (a consequence of the well known Blaschke--Santal\'o inequality) and $i = n - 1$ (the Petty projection inequality) were known until very recently. However, in a landmark paper, Milman and Yehudayoff~\cite{Milman2020} established Lutwak's conjectured inequalities \eqref{Lutwakconjec} giving a unified proof for all $i = 1, \ldots, n - 1$.
	
	Finally, the relation of \eqref{Lutwakconjec} to the chain of inequalities \eqref{WVnPiiVnkette} was settled recently in \cite{Berg2020}, where it was shown that for a convex body $K \subseteq \R^n$ with non-empty interior,
	\begin{equation} \label{PiAffi}
		\frac{\kappa_n^{n+1}}{\kappa_{n-1}^n}  V_n(\Pi_i^\circ K)^{-1} \geq A_{n-i}(K)^n.
	\end{equation}
	Thus, the affine invariant inequality (\ref{Lutwakconjec}) fits in seamlessly in (\ref{WVnPiiVnkette}) and implies the other inequalities of the chain.
	
	
	The aim of this article is to show that \eqref{WVnPiiVnkette} and \eqref{PiAffi} can be extended to a much larger family of inequalities, by proving them not only for the projection body maps but for operators from an infinite dimensional cone. Among the defining characteristics of the maps in this cone are their compatibility with rigid motions and their valuation property, that is, finite (set-theoretic) additivity. More precisely, a map $\phi: \mathcal{K}^n \rightarrow \mathcal{A}$ defined on the set $\mathcal{K}^n$ of convex bodies in $\R^n$ with values in an Abelian semigroup $\mathcal{A}$ is a {\it valuation} if
	\[\phi(K) + \phi(L) = \phi(K \cup L) + \phi(K \cap L)   \]
	whenever $K \cup L$ is convex. The most widely known classical result on real valued valuations, that is, $\mathcal{A} = \R$, is the classification of all continuous rigid motion invariant valuations by Hadwiger~\cite{Hadwiger1957} as precisely the linear combinations of the quermassintegrals, see \textbf{\cite{Klain1997}} and \cite{Schneider2008} for more information on the history of this result and its transformational impact on integral geometry.
	
	Inspired by Hadwiger's theorem, different choices for the semigroup $\mathcal{A}$ became an important focus of interest. In particular, over the last two decades \emph{Minkowski valuations}, where $\mathcal{A} = \mathcal{K}^n$ and addition on $\mathcal{K}^n$ is Minkowski addition, received widespread attention. This line of research has its origins in the seminal work of Ludwig~\cites{Ludwig2002b, Ludwig2005} and, first, was mainly concerned with classifying continuous Minkowski valuations compatible with \emph{linear} transformations \cites{Abardia2012b, Abardia2011b, Abardia2018b, Ludwig2010b, Schuster2012, Wannerer2011}. In this case, recent results show that these valuations form a cone generated by finitely many maps (such as the projection body map $\Pi_{n-1}$). In contrast, results by Kiderlen~\cite{Kiderlen2006} and from \cite{Schuster2007} imply that the cone of all translation invariant continuous Minkowski valuations which merely commute with $\mathrm{SO}(n)$ (like the operators $\Pi_i$ when $i = 1, \ldots, n - 2$) is infinite-dimensional. The natural problem to also obtain a precise description of this cone has yet to be solved. By McMullen~\cite{McMullen1977}, only integer degrees of homogeneity $0 \leq i \leq n$ can occur, with $i = 0$ and $i = n$ being trivial. In \cite{Kiderlen2006} and \cite{Schuster2007}, convolution representations were established for the cases $i = 1$ and $i = n - 1$, respectively. For \emph{even} Minkowski valuations these results were subsequently generalized in \cite{Schuster2010} and \cite{Schuster2015} to all intermediate degrees under an additional smoothness assumption. With our first result, we are able to remove this strong regularity condition.
	
	\begin{propintro} \label{mthm:repEvenMinkVal}
		Suppose that $\Phi_i: \mathcal{K}^n \rightarrow \mathcal{K}^n$ is a continuous, translation invariant, and $\mathrm{SO}(n)$ equivariant Minkowski valuation of degree $i \in \{1, \ldots, n - 1\}$. \linebreak
		If $\Phi_i$ is even, then there exists an $\mathrm{O}(i) \times \mathrm{O}(n-i)$ invariant distribution $\delta$ on $\mathbb{S}^{n-1}$ uniquely determined by the property that for every $K \in \mathcal{K}^{n}$,
		\begin{align*}
			h(\Phi_i K, \cdot) = V_i(K|\,\cdot\,) \ast \delta.
		\end{align*}
		The distribution $\delta$ is called the \emph{spherical Crofton distribution} of $\Phi_i$.
	\end{propintro}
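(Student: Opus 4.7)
My plan is to reduce to the smooth case already settled in Schuster~(2010, 2015) by spherically mollifying $\Phi_i$, and then recover $\delta$ as the weak limit of the smooth Crofton functions of the mollified operators.

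Take a sequence $(\psi_k) \subset C^\infty(\SO(n))$ of non-negative, bi-$\SO(n-1)$-invariant functions forming an approximate identity at the identity, and descend them to zonal functions on $\S^{n-1}$. Define $\Phi_i^{(k)}: \K^n \to \K^n$ by $h(\Phi_i^{(k)} K, \cdot) = h(\Phi_i K, \cdot) \ast \psi_k$. Since $\psi_k \geq 0$ has unit mass, the right-hand side equals the support function of a Minkowski average of rotated copies of $\Phi_i K$, hence is itself a support function; continuity, translation invariance, $\SO(n)$-equivariance, evenness, $i$-homogeneity, and the valuation property all propagate through the linear, equivariant spherical convolution, so $\Phi_i^{(k)}$ lies in the same class as $\Phi_i$ and additionally has smooth support functions. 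The smooth case then yields a unique $\OO(i) \times \OO(n-i)$-invariant $f_k \in C^\infty(\S^{n-1})$ with $h(\Phi_i^{(k)} K, \cdot) = V_i(K|\,\cdot\,) \ast f_k$ for every $K \in \K^n$.

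To pass to the limit, I would decompose into $\SO(n)$-irreducible spherical-harmonic components. The $\OO(i) \times \OO(n-i)$-invariance restricts $f_k$ to a one-dimensional subspace within each admissible type, and a Funk--Hecke-type identity turns the convolution into scalar multiplication on each component. Testing the identity on a fixed convex body $K_0$ whose projection function has non-vanishing harmonic part on every admissible type allows me to solve explicitly for the multipliers of $f_k$ in terms of those of $h(\Phi_i K_0, \cdot) \ast \psi_k$; since the Fourier multipliers of $\psi_k$ tend to one as $k \to \infty$, those coefficients converge, the multipliers of $f_k$ stabilize, and together they determine a $\OO(i) \times \OO(n-i)$-invariant distribution $\delta$ of finite Sobolev order on $\S^{n-1}$. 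Letting $k \to \infty$ in the convolution identity, using the uniform convergence $h(\Phi_i^{(k)} K, \cdot) \to h(\Phi_i K, \cdot)$ and continuity of the pairing against the fixed continuous kernel $V_i(K|\cdot)$, yields the claimed formula; uniqueness follows because $V_i(K|\cdot) \ast (\delta - \delta') = 0$ for every $K$ would annihilate every admissible harmonic component of $\delta - \delta'$ via injectivity of the relevant cosine/Radon transform on $\Gr_{i,n}$.

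The main obstacle will be establishing a uniform-in-$k$ Sobolev bound on $f_k$ ensuring an honest distributional limit. This rests on the harmonic asymptotics of the kernel governing convolution by $V_i(K|\cdot)$, whose decay on high-degree types only partially offsets the loss of regularity removed by $\psi_k$; controlling this trade-off component by component is the delicate heart of the argument.
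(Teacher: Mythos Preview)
Your approach is genuinely different from the paper's, and the gap you flag at the end is real and, as written, unfilled.

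The paper does not mollify at all. Instead it passes to the associated real-valued valuation $\varphi_i(K)=h(\Phi_i K,\bar e)\in\Val_i^+$, embeds it into Alesker--Faifman's space of \emph{generalized} valuations $\Val_i^{+,-\infty}$ via the extended Poincar\'e duality, and invokes their result that the Crofton map extends to a \emph{surjection} $\widetilde{\Cr_i}:C^{-\infty}(\Gr_{i,n})\to\Val_i^{+,-\infty}$. This produces a Crofton distribution $\psi$ directly, with no limiting procedure; the convolution formula then follows from a lemma of Alesker--Faifman evaluating $\widetilde{\Cr_i}\psi$ on $C^\infty_+$ bodies, plus approximation. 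Uniqueness is obtained, as you also suggest, from bijectivity of the cosine transform on the relevant invariant subspace. In short, the paper outsources the hard analytic step (existence of $\delta$ as a distribution of finite order) to \cite{Alesker2014}.

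Your proposal tries to prove exactly that analytic step by hand, and the test-body strategy you sketch does not close. Solving for the multipliers via a fixed $K_0$ gives $\lambda_m(f_k)=h(\Phi_i K_0,\cdot)_m\,\lambda_m(\psi_k)\big/V_i(K_0|\cdot)_m$; the numerator is under control, but the denominator is the $m$-th harmonic coefficient of a \emph{specific} projection function. For any smooth $K_0$ this decays faster than any polynomial, so the ratio has no a priori polynomial bound, and for a rough $K_0$ you have no mechanism to guarantee simultaneously that all admissible coefficients are nonzero \emph{and} decay at a controlled polynomial rate. The quantity you actually need to invert is not ``convolution by $V_i(K_0|\cdot)$'' for some $K_0$ but the $K$-independent cosine transform $C_i$ on $\SO(n-1)$-invariant functions on $\Gr_{i,n}$; its multipliers decay polynomially, which is what makes $\delta$ a distribution of finite order. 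Establishing that inversion is essentially the content of the Alesker--Faifman input the paper uses. If you want to salvage the mollification route, a cleaner path is to combine the already-known $L^1$ generating function representation (their Theorem~1.1) with the explicit relation $\hat\mu = c\,R_{n-1,i}\nu^\perp$ and the polynomial multiplier bounds for the spherical cosine transform, rather than testing against a single body.
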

	
	The convolution here is induced from the compact group $\mathrm{SO}(n)$ by identifying $\mathbb{S}^{n-1}$ and $\mathrm{Gr}_{i,n}$ with homogeneous spaces with respect to $\mathrm{SO}(n)$ (see~\cref{sec:bgConv}).
	
	
	The proof of \cref{mthm:repEvenMinkVal} relies on results about generalized valuations by Alesker and Faifman~\cite{Alesker2014} and the techniques to obtain the earlier versions of \cref{mthm:repEvenMinkVal} from \cite{Schuster2010} and \cite{Schuster2015}. Examples of spherical Crofton distributions and more details on the class of distributions that appear for different degrees will be given in \cref{sec:prfIneq} and in \cref{sec:exExtr}.
	
	In recent years, several isoperimetric-type inequalities involving projection body maps (of arbitrary degree) were shown to hold, in fact, for much larger classes of Minkowski valuations compatible with rigid motions (see \cites{Alesker2011, Berg2014, Haberl2019, Hofstaetter2021, Parapatits2012, Schuster2007}).
	Based on \cref{mthm:repEvenMinkVal}, we establish with our main result a significant extension of inequalities \eqref{WVnPiiVnkette} and \eqref{PiAffi} to all even Minkowski valuations admitting a spherical Crofton distribution which is non-negative (and, thus, a spherical Crofton \emph{measure}).
	
	\begin{thmintro} \label{mthm:ineqEvenMinkVal} Suppose that the spherical Crofton distribution of an even Minkowski valuation $\Phi_i : \mathcal{K}^n \rightarrow \mathcal{K}^n$ of degree $i$, $1 \leq i \leq n - 1$, is non-negative and normalized such that $\Phi_i B^n = \kappa_{n-1}B^n$. If $K \in \mathcal{K}^n$ has non-empty interior, then
		\begin{align}\label{eq:mthmIneq}
			W_{n-i}(K)^{n} \geq \frac{\kappa_n^{n+1}}{\kappa_{n-1}^n} V_n(\Phi_i^\circ K)^{-1} \geq  A_{n-i}(K)^{n}.
		\end{align}
		There is equality in the left hand inequality if and only if $\Phi_iK$ is a Euclidean ball. Equality holds in the right hand inequality if and only if $K$ is of constant $i$-brightness or $i = 1$ and $\Phi_1 = \frac{\kappa_{n-1}}{2}(-\mathrm{Id} + \mathrm{Id})$ or $i = n - 1$ and $\Phi_{n-1} = \Pi_{n-1}$.
	\end{thmintro}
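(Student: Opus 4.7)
My plan is to deduce both inequalities from Jensen's inequality applied to $x\mapsto x^{-n}$, combined with \cref{mthm:repEvenMinkVal} and Kubota's formula. A preliminary computation at $K=B^n$, where $V_i(B^n|E)=\kappa_i$ is constant on $\mathrm{Gr}_{i,n}$ and $\Phi_iB^n=\kappa_{n-1}B^n$ forces $h(\Phi_iB^n,\cdot)$ to be the constant $\kappa_{n-1}$, fixes the total mass of $\delta$ as $\mu(\delta)=\kappa_{n-1}/\kappa_i$; in particular $\delta/\mu(\delta)$ is a probability measure on $\mathbb{S}^{n-1}$.

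For the left inequality I would integrate $h(\Phi_iK,\cdot)$ over $\mathbb{S}^{n-1}$. Fubini on $\mathrm{SO}(n)$ together with the descent of Haar measure to $\mathbb{S}^{n-1}$ and $\mathrm{Gr}_{i,n}$ turns the integral of the convolution into $n\kappa_n\mu(\delta)\int_{\mathrm{Gr}_{i,n}}V_i(K|E)\,dE$, which by Kubota equals $n\kappa_{n-1}W_{n-i}(K)$. Combined with the Jensen bound $\bigl(\int f\,du\bigr)^{n}\int f^{-n}\,du\geq (n\kappa_n)^{n+1}$ applied to $f=h(\Phi_iK,\cdot)$ and the identity $V_n(\Phi_i^\circ K)=\tfrac1n\int h(\Phi_iK,u)^{-n}\,du$, this produces the left inequality, with equality iff $h(\Phi_iK,\cdot)$ is constant on $\mathbb{S}^{n-1}$, i.e.\ iff $\Phi_iK$ is a Euclidean ball. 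For the right inequality I would instead apply Jensen \emph{pointwise} against the probability measure $\delta/\mu(\delta)$ to obtain
\[
\bigl((V_i(K|\cdot)\ast\delta)(u)\bigr)^{-n}\;\leq\;\mu(\delta)^{-n-1}\bigl(V_i(K|\cdot)^{-n}\ast\delta\bigr)(u)\qquad (u\in\mathbb{S}^{n-1}).
\]
Integrating over $\mathbb{S}^{n-1}$, commuting past the convolution by the same Fubini–Kubota step as above, and substituting $\mu(\delta)=\kappa_{n-1}/\kappa_i$ yields $V_n(\Phi_i^\circ K)\leq \tfrac{\kappa_n\kappa_i^n}{\kappa_{n-1}^n}\int_{\mathrm{Gr}_{i,n}}V_i(K|E)^{-n}\,dE$, which is exactly $\tfrac{\kappa_n^{n+1}}{\kappa_{n-1}^n}A_{n-i}(K)^{-n}$ by the definition of the affine quermassintegral.

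The main obstacle is the equality characterization on the right. Pointwise Jensen is an equality at a fixed $u$ iff $V_i(K|\cdot)$ is constant on the support of the measure on $\mathrm{Gr}_{i,n}$ against which the convolution integrates at $u$; requiring this for every $u$ forces $V_i(K|\cdot)$ to be constant on the $\mathrm{SO}(n)$-saturation of that support inside $\mathrm{Gr}_{i,n}$. To conclude that $K$ must have constant $i$-brightness unless $\delta$ is concentrated on a zero-dimensional orbit, I would exploit the orbit structure of the action of $\mathrm{O}(i)\times\mathrm{O}(n-i)$ on $\mathbb{S}^{n-1}$: parametrized by $t\in[0,1]$ with generic orbit $\mathbb{S}^{i-1}(\sqrt{t})\times\mathbb{S}^{n-i-1}(\sqrt{1-t})$, all orbits have positive dimension when $2\leq i\leq n-2$, so the support of any non-negative invariant $\delta$ sweeps out enough of $\mathrm{Gr}_{i,n}$ through its $\mathrm{SO}(n)$-translates to force $V_i(K|\cdot)$ to be globally constant. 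Only for $i\in\{1,n-1\}$ do the zero-dimensional antipodal orbits $\{\pm e_1\}$, resp.\ $\{\pm e_n\}$, appear; a direct computation then identifies the appropriately normalized Dirac measures supported on them with the two exceptional valuations $\tfrac{\kappa_{n-1}}{2}(-\Id+\Id)$ and $\Pi_{n-1}$, for which the pointwise Jensen step is trivially an equality for every $K$.
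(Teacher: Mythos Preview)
Your derivation of both inequalities coincides with the paper's: the left-hand side is exactly Lutwak's Urysohn-type bound \eqref{eq:PolMeanWidthIneqLutwak} combined with $W_{n-1}(\Phi_iK)=r_{\Phi_i}W_{n-i}(K)$ (\cref{lem:propNormMinkVal}), and the right-hand side is the same pointwise Jensen step followed by an $\SO(n)$-average and Fubini that the paper carries out in \cref{thm:ineqEvenMinkValAffQm}.

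The gap is in the equality discussion on the right. Equality in Jensen gives, for each $u\in\S^{n-1}$, a constant $c_u$ with $V_i(K|\vartheta_uF)=c_u$ for all $F$ in the $\SO(n-1)$-orbit supporting $\widehat\delta$. Your sentence ``requiring this for every $u$ forces $V_i(K|\cdot)$ to be constant on the $\SO(n)$-saturation of that support'' is not correct as stated: the saturation is all of $\Gr_{i,n}$, but the constants $c_u$ may a priori depend on $u$. What must be shown is that $c_u=c_{u'}$ for all $u,u'$, and this is \emph{not} implied by the orbits having positive dimension. One needs that nearby $\SO(n)$-translates of the orbit in $\Gr_{i,n}$ actually \emph{intersect}, so that the corresponding constants are forced to agree, and then connectedness of $\SO(n)$ propagates this globally. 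The paper isolates precisely this overlap statement as \cref{lem:OrbitsSOn1} for the generic orbits (those with $t_E\in(0,1)$), and treats the two extreme orbits $\{E:\bar e\in E\}$ and $\{E\subset\bar e^\perp\}$ by hand (using $i\geq 2$ and $i\leq n-2$ respectively to guarantee a common $i$-plane). Your ``sweeps out enough'' is the right intuition but is not yet an argument; the overlap lemma is the missing ingredient.
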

	
	Let us note that (as we shall show) the assumption that the spherical Crofton distribution is non-negative is only necessary for the right hand inequality. In fact, the left hand inequality, including its equality conditions, holds for all continuous Minkowski valuations which are translation in- and $\mathrm{SO}(n)$ equivariant, mapping bodies with non-empty interior to such bodies. Next, we want to point out that when $i = 1$ or $i = n - 1$ \emph{all} non-negative spherical measures which are $\mathrm{O}(n-1)$ invariant are spherical Crofton measures. It is an open problem, whether the same is true also for the degrees $2 \leq i \leq n - 2$. Finally, the special cases $i = 1$ and $i = n - 1$ of \cref{mthm:ineqEvenMinkVal} were previously obtained in \cite{Hofstaetter2021} and \cite{Haberl2019}, respectively. Partial results for the intermediate degrees, without the equality cases for the right hand inequality, were obtained in \cite{Berg2020}. However, we will see in \cref{sec:prfIneq} that \cref{mthm:ineqEvenMinkVal} provides a significant extension of these earlier results.
	
	An immediate consequence of \cref{mthm:ineqEvenMinkVal} and inequality \eqref{Lutwakconjec} of Milman and Yehudayoff is the solution to the following isoperimetric problem.
	
	\begin{corintro}\label{mcor:maxEvenMinkVal}
		Suppose that $\Phi_i: \mathcal{K}^n \rightarrow \mathcal{K}^n$ is a continuous, translation invariant, and $\mathrm{SO}(n)$ equivariant Minkowski valuation of a given degree $i \in \{1, \ldots, n - 1\}$.
		If $\Phi_i$ is non-trivial, even and its spherical Crofton distribution non-negative, then, among $K \in \mathcal{K}^n$ with non-empty interior,
		\begin{equation} \label{volprodphii}
			V_n(\Phi_i^\circ K) V_n(K)^i
		\end{equation}
		is maximized by Euclidean balls. If $i = 1$ and $\Phi_1 = c(-\mathrm{Id} + \mathrm{Id})$ or $i = n - 1$ and $\Phi_{n-1} = c\Pi_{n-1}$ for some $c > 0$, then $K$ is a maximizer if and only if it is an ellipsoid. Otherwise, Euclidean balls are the only maximizers.
	\end{corintro}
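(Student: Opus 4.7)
The plan is to derive the corollary by chaining Theorem B with the Milman--Yehudayoff resolution of Lutwak's conjecture (\ref{Lutwakconjec}). Since $\Phi_i$ is a non-trivial continuous $\mathrm{SO}(n)$ equivariant Minkowski valuation with non-negative spherical Crofton distribution, the body $\Phi_i B^n$ must be a non-degenerate Euclidean ball, say $c\kappa_{n-1}B^n$ with $c > 0$. As the product $V_n(\Phi_i^\circ K) V_n(K)^i$ is merely multiplied by a positive constant when $\Phi_i$ is rescaled, the set of its maximizers is invariant under such rescaling. Hence I may reduce without loss of generality to the normalization $\Phi_i B^n = \kappa_{n-1}B^n$, which is exactly the hypothesis of Theorem B.

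Under this normalization, the right hand inequality of Theorem B together with (\ref{Lutwakconjec}) gives
\[
\frac{\kappa_n^{n+1}}{\kappa_{n-1}^n}\, V_n(\Phi_i^\circ K)^{-1} \geq A_{n-i}(K)^n \geq \kappa_n^{n-i}\, V_n(K)^i,
\]
which rearranges to the upper bound $V_n(\Phi_i^\circ K)\, V_n(K)^i \leq \kappa_n^{i+1}/\kappa_{n-1}^n$. A direct computation, using $\Phi_i^\circ B^n = \kappa_{n-1}^{-1}B^n$, shows that $K = B^n$ achieves this bound. Thus Euclidean balls are maximizers, establishing the first assertion.

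For the characterization of equality, any maximizer $K$ must realize equality in both inequalities of the chain. By Milman--Yehudayoff, the left equality forces $K$ to be an ellipsoid. By the equality statement of Theorem B, the right equality forces either that $K$ is of constant $i$-brightness, or that $(i,\Phi_i)$ is one of the two exceptional pairs $(1, \tfrac{\kappa_{n-1}}{2}(-\mathrm{Id}+\mathrm{Id}))$ or $(n-1, \Pi_{n-1})$. In the exceptional cases the right hand inequality of Theorem B is an identity for every $K$, so the maximizers are precisely the ellipsoids, matching the statement of the corollary (up to the scaling $c > 0$ absorbed earlier). In the non-exceptional cases, $K$ must be simultaneously an ellipsoid and of constant $i$-brightness; the classical fact that an ellipsoid $TB^n$ satisfies $V_i(TB^n \mid E) = \kappa_i\sqrt{\det(P_E TT^\top P_E^\top)}$, which is independent of $E \in \mathrm{Gr}_{i,n}$ precisely when $TT^\top$ is a scalar multiple of the identity, then forces $K$ to be a Euclidean ball.

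The main obstacle is purely organisational: carefully combining the two distinct equality conditions, tracking the positive scaling $c$ that disappears under the ball-invariance of the maximization, and invoking the (standard but non-trivial) classification of ellipsoids of constant $i$-brightness as balls. Once these are in place, the corollary follows immediately from Theorem B and the Milman--Yehudayoff theorem without any further analytic input.
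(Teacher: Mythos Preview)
Your proposal is correct and follows precisely the route the paper indicates: the corollary is stated as ``an immediate consequence of Theorem~B and inequality~\eqref{Lutwakconjec} of Milman and Yehudayoff,'' and you have spelled out exactly that chaining, together with the equality analysis. The only addition on your side is the explicit verification that an ellipsoid of constant $i$-brightness must be a ball, which the paper leaves implicit.
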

	
	
	
	Let us emphasize that the existence of extremals for \eqref{volprodphii} is a-priori not clear. In case of continuous affine invariant functionals, this follows easily from compactness
	(see, e.g., \cite{Schneider2014}*{Chapter 10}). However, as was recently discovered in \cite{Hofstaetter2021}, there exists a continuous Minkowski valuation $\Phi_1$ of degree 1 compatible with rigid motions such that $V_n(\Phi_1^\circ K) V_n(K)$ is unbounded. As this somewhat surprising example is \emph{not even}, we consider for our final result -- a sufficient condition for the existence of maximizers of volume products of the form \eqref{volprodphii} -- also Minkowski valuations that are not necessarily even. To this end, we require the following counterpart of \cref{mthm:repEvenMinkVal} for such valuations. Here, $S_i(K, \cdot)$ denotes the area measure of order $i$ of $K \in \K^n$, $0 \leq i \leq n-1$ (see, e.g., \cite{Schneider2014}*{Ch.~4}).
	
	
	\begin{thm}[\cites{Dorrek2017b, Schuster2018}]\label{thm:repGenFct}
		If $\Phi_i: \mathcal{K}^n \rightarrow \mathcal{K}^n$ is a continuous, translation invariant, and $\mathrm{SO}(n)$ equivariant Minkowski valuation of a given degree $i \in \{1, \ldots, n - 1\}$, then there exists a unique $\mathrm{SO}(n - 1)$ invariant $f \in L^1(\mathbb{S}^{n-1})$ with center of mass at the origin such that for every $K \in \mathcal{K}^n$,
		\begin{equation} \label{convrep}
			h(\Phi_i K,\cdot) = S_i(K,\cdot) \ast f.
		\end{equation}
		\noindent The function $f$ is called the \emph{generating function} of $\Phi_i$.
	\end{thm}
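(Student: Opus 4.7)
The plan is to reduce the Minkowski-valued statement to a scalar valuation by evaluating at a pole, represent that valuation by integration against the area measure $S_i(K,\cdot)$ with an $L^1$ density, and then invoke $\SO(n)$-equivariance to turn the scalar representation into the claimed convolution on $\S^{n-1} \cong \SO(n)/\SO(n-1)$.

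Fix $\bar{e} \in \S^{n-1}$ with stabilizer $H = \SO(n-1)$. The map $\mu(K) := h(\Phi_i K, \bar{e})$ is a continuous, translation invariant, real-valued valuation of degree $i$. The $\SO(n)$-equivariance of $\Phi_i$ combined with $h(\eta L, \bar{e}) = h(L, \eta^{-1}\bar{e})$ shows $\mu$ is $H$-invariant. The key step is to produce an $H$-invariant $f \in L^1(\S^{n-1})$ with vanishing first moment such that
\[\mu(K) = \int_{\S^{n-1}} f(u)\, dS_i(K,u), \qquad K \in \K^n.\]
To obtain $f$, I would smooth $\Phi_i$ by convolving on $\SO(n)$ with an approximate identity to get a family of smooth $\SO(n)$-equivariant Minkowski valuations $\Phi_i^{(\varepsilon)}$; for these, Alesker's theory of smooth translation invariant valuations together with classical Kubota/Crofton-type formulas delivers smooth $H$-invariant generating functions $f^{(\varepsilon)}$ satisfying the analogue of the display. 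A uniform $L^1$ bound on $f^{(\varepsilon)}$ is extracted by testing against $K = B^n$, since $S_i(B^n,\cdot)$ is a positive multiple of spherical Lebesgue measure and $\mu^{(\varepsilon)}(B^n)$ stays bounded by continuity of $\Phi_i$; combined with positivity estimates for the positive and negative parts of $f^{(\varepsilon)}$ obtained from the sublinearity of $h(\Phi_i^{(\varepsilon)}K,\cdot)$ applied to suitable perturbations of $B^n$, a weak limit yields the desired $f \in L^1(\S^{n-1})$.

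For any $u = \eta\bar{e} \in \S^{n-1}$ with $\eta \in \SO(n)$, the $\SO(n)$-equivariance of $\Phi_i$ and the rotation-equivariance of area measures yield
\[h(\Phi_i K, u) = h(\Phi_i \eta^{-1} K, \bar{e}) = \int_{\S^{n-1}} f(v)\, dS_i(\eta^{-1} K, v) = (S_i(K,\cdot) \ast f)(u),\]
the last equality being the definition of convolution on $\S^{n-1} \cong \SO(n)/H$; the $H$-invariance of $f$ is exactly what makes the right-hand side independent of the choice of $\eta$. Uniqueness of $f$ up to linear functions on $\S^{n-1}$ (which are annihilated by every $S_i(K,\cdot)$ because area measures of orders $0 \leq i \leq n-1$ have vanishing first moment) follows from the classical density result of Weil and Goodey that differences $S_i(K,\cdot) - S_i(L,\cdot)$ are weak-$*$ dense in the signed measures with vanishing first moment; the center-of-mass normalization on $f$ then removes this remaining ambiguity.

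The main obstacle is the $L^1$ regularity of $f$. A generic continuous translation invariant valuation of degree $i$ admits, via the Goodey-Weil embedding, only a distributional representation, and upgrading to an $L^1$ density uses both the Minkowski property of $\Phi_i$ -- which forces $\mu$ to be bounded and compatible with the sublinearity of support functions -- and the $H$-invariance, which effectively reduces the regularity problem from $n-1$ sphere coordinates to a single zonal variable. This is the technical heart of the arguments in \cite{Dorrek2017b} and \cite{Schuster2018}.
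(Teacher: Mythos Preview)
The paper does not prove this theorem; it is quoted from the cited references \cite{Dorrek2017b} and \cite{Schuster2018} and used as a black box. There is therefore no ``paper's own proof'' to compare against.

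Your outline follows the natural strategy (reduce to the scalar valuation $\mu(K)=h(\Phi_iK,\bar e)$, represent it against $S_i(K,\cdot)$, then rotate), and the convolution step and the uniqueness argument via the Goodey--Weil density theorem are correct. The genuine gap is exactly where you locate it: the $L^1$ regularity of $f$. Your proposed mechanism for the uniform $L^1$ bound does not work as stated. Testing against $K=B^n$ only controls $\int_{\S^{n-1}} f^{(\varepsilon)}\,d\sigma$, not $\int_{\S^{n-1}} |f^{(\varepsilon)}|\,d\sigma$; and ``positivity estimates for the positive and negative parts \dots\ from sublinearity applied to suitable perturbations of $B^n$'' is not a proof --- sublinearity of $h(\Phi_i^{(\varepsilon)}K,\cdot)$ gives inequalities between values of $\mu^{(\varepsilon)}$ at different bodies, not pointwise sign information on $f^{(\varepsilon)}$. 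A weak limit of the $f^{(\varepsilon)}$ without an $L^1$ bound could a priori be a genuine distribution (this is exactly what the Goodey--Weil embedding produces for a general continuous valuation). The actual argument in \cite{Dorrek2017b} is considerably more delicate and exploits the zonal (i.e., $\SO(n-1)$-invariant) structure together with the Minkowski property in a way that cannot be summarized as ``perturb $B^n$''. So your proposal is a correct roadmap with the hard step deferred to the literature, which is also precisely what the paper does.
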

	Let us point out that it was shown in \cite{Brauner2023} that in certain cases, the generating function $f$ is actually much more regular.

	The aforementioned sufficient condition can now be stated in terms of generating functions. We also include in our result an immediate application of the condition that uncovers a phenomenon not seen before.
	
	\begin{thmintro} \label{mthm:existExtr} Suppose that $\Phi_i: \mathcal{K}^n \rightarrow \mathcal{K}^n$ is a continuous, translation invariant, and $\mathrm{SO}(n)$ equivariant Minkowski valuation of a given degree
		$i \in \{1, \ldots, n - 1\}$. If the generating function of $\Phi_i$ is a sum of two generating functions one of which is bounded from below by a positive constant, then
		\[V_n(\Phi_i^\circ K) V_n(K)^i\]
		attains a maximum on convex bodies $K \in \mathcal{K}^n$ with non-empty interior.
		Moreover, for $i = 1$, there exist Minkowski valuations $\Psi_1: \mathcal{K}^n \rightarrow \mathcal{K}^n$ such that the maximizers of $V_n(\Psi_1^\circ K) V_n(K)$ are different from Euclidean balls.
	\end{thmintro}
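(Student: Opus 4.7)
The plan for the existence of a maximizer is to turn the hypothesis $f = g + h$, via \cref{thm:repGenFct}, into a Minkowski sum $\Phi_i K = \Phi_i^g K + \Phi_i^h K$ of Minkowski valuations of the same type as $\Phi_i$. The identity $\int_{\S^{n-1}} v\,dS_i(K,v) = 0$—equivalent to translation invariance of mixed volumes—forces the Steiner point of both $\Phi_i^g K$ and $\Phi_i^h K$ to lie at the origin, so both contain it and $\Phi_i K \supseteq \Phi_i^g K$. Combined with $g \geq c > 0$ and the non-negativity of $S_i(K,\cdot)$, this gives
\begin{equation*}
	h(\Phi_i K, u) \;\geq\; \bigl(S_i(K, \cdot) \ast g\bigr)(u) \;\geq\; c\, n\, W_{n-i}(K) \qquad \text{for all } u \in \S^{n-1},
\end{equation*}
so $\Phi_i K \supseteq c\, n\, W_{n-i}(K)\, B^n$ and $V_n(\Phi_i^\circ K) \leq \kappa_n\,\bigl(c\, n\, W_{n-i}(K)\bigr)^{-n}$; coupled with the classical inequality~\eqref{WiVninequ} this yields a uniform upper bound on $V_n(\Phi_i^\circ K)\,V_n(K)^i$. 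For attainment, normalize a maximizing sequence to $V_n(K_j) = 1$: the bound just derived forces $W_{n-i}(K_j)$ to stay bounded along any non-vanishing subsequence, so after a translation the $K_j$ lie in a fixed bounded set, and the Blaschke selection theorem produces a Hausdorff limit $K_\ast$ with $V_n(K_\ast) = 1$. The uniform containment $\Phi_i K_{j_k} \supseteq r\,B^n$ then guarantees continuity of $K \mapsto V_n(\Phi_i^\circ K)$ at $K_\ast$, identifying $K_\ast$ as a maximizer.

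For the asymmetric example at $i = 1$, consider $\Psi_1 := \Phi_1' + \epsilon\, \Phi_1''$, where $\Phi_1' K := c_0\, W_{n-1}(K)\, B^n$ is the Minkowski valuation with constant generating function $c_0 > 0$ (its polar volume product $V_n((\Phi_1'K)^\circ)\,V_n(K)$ is maximized at Euclidean balls by~\eqref{WiVninequ}), and $\Phi_1''$ is a continuous, translation invariant, $\SO(n)$ equivariant, non-even Minkowski valuation of degree $1$. Such $\Phi_1''$ exist in abundance: take an $\SO(n-1)$ invariant $L^1$ generating function with a non-zero odd spherical harmonic of degree $\geq 3$ (degree $1$ being ruled out by the centre-of-mass condition of \cref{thm:repGenFct}). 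For small $\epsilon > 0$, the generating function of $\Psi_1$ splits as $c_0$ plus a bounded perturbation, so the decomposition hypothesis is satisfied and a maximizer exists by the first part. The task is to exhibit an asymmetric body $K_0$—for instance a regular simplex oriented relative to the axis of $\Phi_1''$'s generating function—for which
\begin{equation*}
	V_n(\Psi_1^\circ K_0)\, V_n(K_0) \;>\; V_n(\Psi_1^\circ B^n)\, V_n(B^n),
\end{equation*}
whereupon no Euclidean ball can be a maximizer.

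The main obstacle is this strict inequality. Because $\int v\,dS_i(K,v) = 0$ forces the first variation of $V_n(\Psi_1^\circ K)\,V_n(K)$ at $K = B^n$ to vanish for every $\Psi_1$, a first-order perturbation argument cannot suffice. One instead has to carry out a higher-order analysis of the polar-volume functional in terms of spherical harmonic coefficients of the generating function of $\Psi_1$, locate an odd-harmonic direction in which the non-even component $\Phi_1''$ contributes a positive second-order term, and then realize that direction by a concrete asymmetric $K_0$ where the strict inequality persists. By contrast, the first part reduces to a clean coercivity plus Blaschke selection argument once the Minkowski decomposition afforded by \cref{thm:repGenFct} is in hand.
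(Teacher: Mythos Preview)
Your first part is essentially the paper's argument: decompose $\Phi_i = \Phi_i^g + \Phi_i^h$ via \cref{thm:repGenFct}, use the Steiner-point-at-origin fact to get $\Phi_i K \supseteq \Phi_i^g K \supseteq cnW_{n-i}(K)B^n$, bound $V_n(\Phi_i^\circ K)V_n(K)^i$ by a constant times $V_n(K)^i/W_{n-i}(K)^n$, and then run Blaschke selection. The only cosmetic difference is normalization: the paper scales so that $K_j \subseteq B^n$ and contains a unit segment, then invokes a separate lemma (\cref{lem:isoperRatioTendsInfty}) to rule out a degenerate limit, whereas you fix $V_n(K_j)=1$ and argue directly that $W_{n-i}(K_j)$ bounded forces bounded diameter. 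Your claim is correct (it follows from the same two ingredients the paper uses, namely the diameter bound~\eqref{ineqDiamGritzmann} and the quermassintegral inequality~\eqref{eq:IsoperimetricIneqTwoQuermass}), but you should spell out that step rather than assert it.

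The second part, however, has a genuine gap. You set up $\Psi_1 = \Phi_1' + \epsilon\Phi_1''$ with $\Phi_1'K = c_0W_{n-1}(K)B^n$, and then say the ``main obstacle'' is producing a body $K_0$ beating the ball, proposing a second-order spherical-harmonic analysis that you do not carry out. There is reason to doubt this route works as stated: for $\epsilon=0$ balls are the \emph{strict} maximizers (Urysohn's inequality has only balls as equality cases), so the second variation at $B^n$ is negative definite modulo the obvious invariances; by continuity it remains so for small $\epsilon$, and balls stay \emph{local} maximizers. You would therefore have to find $K_0$ far from round, and nothing in a local harmonic expansion tells you how to do that.

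The paper takes a completely different, and much shorter, route. It uses the operator $\mathrm{J}K = K - s(K)$ and the key external input (\cref{thm:Junbounded}, from \cite{Hofstaetter2021}) that $V_n(\mathrm{J}^\circ K)V_n(K)$ is \emph{unbounded}. One then sets $\Psi_1^\lambda = \lambda\Phi_1 + (1-\lambda)\mathrm{J}$ for any $\Phi_1$ with generating function bounded below by a positive constant. By the first part, maximizers exist for every $\lambda\in(0,1)$. Since $\Psi_1^\lambda B^n$ is a ball of radius uniformly bounded below, $V_n(\Psi_1^{\lambda,\circ}B^n)V_n(B^n)$ is uniformly bounded in $\lambda$; but as $\lambda\to 0$ one has $\Psi_1^\lambda K \to \mathrm{J}K$, so by unboundedness there is some $K$ with $V_n(\Psi_1^{\lambda,\circ}K)V_n(K)$ exceeding that uniform bound once $\lambda$ is small. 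Hence balls are not maximizers. No perturbation analysis is needed; the entire weight is carried by the existence of a degree-$1$ Minkowski valuation with unbounded polar volume product, which is precisely the ingredient missing from your proposal.
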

	
	Let us point out that an even Minkowski valuation with a positive generating function need not have a non-negative spherical Crofton distribution (cf.\ \cref{thm:monotExMax}). Let us also note that the positivity condition on generating functions in \cref{mthm:existExtr} is not very restrictive. Indeed, generating functions of Minkowski valuations of degree $i = n - 1$ are all non-negative. Moreover, strictly positive support functions of convex bodies of revolution generate a large class of examples of Minkowski valuations of arbitrary degree $1 \leq i \leq n - 1$.
	
	In addition, our proof of \cref{mthm:existExtr} shows that any example of a Minkowski valuation $\Phi_i$ with unbounded volume product $V_n(\Phi_i^\circ K) V_n(K)^i$ yields an entire cone with apex at $\Phi_i$ of Minkowski valuations $\Psi_i$ such that the maximizers of $V_n(\Psi_i^\circ K) V_n(K)^i$ exist and are different from Euclidean balls. In the final section of the article, we will also comment on non-polar versions of \cref{mthm:existExtr}.
	
	\newpage
	
	\section{Background material on Harmonic Analysis}\label{sec:bgConv}
	In this section we recall some well known facts about generalized functions and the notion of convolution of functions, measures, and distributions on the homogeneous spaces $\mathbb{S}^{n-1}$ and $\Gr_{i,n}$. For the first reading, this section may be skipped, we will mostly need equations \eqref{eq:convGrSph}, \eqref{eq:ConvSphSphMeas} and \eqref{eq:ConvSphMeasSph}. As general references for the material presented here we recommend the articles \cite{Grinberg1999} and \cite{Schuster2018}.
	
	Let us first consider a general smooth manifold $M$. (In what follows, $M$ will be either Euclidean space $\R^n$, the Euclidean unit sphere $\S^{n-1}$, the Lie group $\SO(n)$, or the Grassmannian $\Gr_{i,n}$.) As usual $C^{\infty}(M)$ denotes the space of all smooth functions on $M$ equipped with the Fr\'echet space topology of uniform convergence of any finite number of derivatives on each compact subset of $M$. If $X$ is a Banach space, then the Fr\'echet space $C^{\infty}(M,X)$ of all infinitely differentiable functions on $M$ with values in $X$ is defined in a similar way.
	
	We denote by $C^{-\infty}(M)$ the space of distributions on $M$, that is, the topological dual space of the space $C_c^{\infty}(M)$ of all functions in $C^{\infty}(M)$ with compact support.
	Note that $C^{-\infty}(M)$ often denotes the space of generalized functions, that is, continuous linear functionals on $C_c^{\infty}(M,|\Lambda|(M))$, where $|\Lambda|(M)$ is the 1-dimensional space of smooth densities on $M$. However, the choice of a Riemannian metric on $M$ induces an isomorphism between the spaces of distributions and generalized functions. In the following, we make use of this identifications and write $C^{-\infty}(M)$ for the space of distributions equipped with the topology of weak convergence. The canonical bilinear pairing on $C_c^{\infty}(M) \times C^{-\infty}(M)$ is denoted by $\pair{\cdot}{\cdot}_{C^{-\infty}}$.
	
	Letting $\mathcal{M}(M)$ denote the space of signed finite Borel measures on $M$, note that every $\mu \in \mathcal{M}(M)$ defines a distribution $\nu_{\mu}$ by
	\begin{align*}
		\pair{f}{\nu_{\mu}}_{C^{-\infty}} = \int_{M} f(x)\,d\mu(x), \qquad f \in C_c^{\infty}(M).
	\end{align*}
	We will use the continuous linear injection $\mu \mapsto \nu_{\mu}$ to identify $\mathcal{M}(M)$ with a (dense) subspace of $C^{-\infty}(M)$. In the same way, the spaces $C^{\infty}(M)$, $C(M)$, and $L^2(M)$ can be identified with subspaces of $C^{-\infty}(M)$ and we will often not distinguish between a function or measure and its associated distribution.
	
	Let $M$ now be one of the compact smooth manifolds $\SO(n)$, $\S^{n-1}$, or $\Gr_{i,n}$. Then for a subgroup $H$ of $\SO(n)$, the left-action $l_{\vartheta}$ of $\vartheta \in H$ on $f \in C^{\infty}(M)$ is given by
	\begin{align*}
		(l_{\vartheta} f)(x) = f(\vartheta^{-1}x), \qquad x \in M.
	\end{align*}
	It extends to distributions as follows: For $\vartheta \in H$ and $\nu \in C^{-\infty}(M)$, we set
	\begin{align*}
		\pair{l_{\vartheta} \nu}{f}_{C^{-\infty}} = \pair{\nu}{l_{\vartheta^{-1}} f}_{C^{-\infty}}, \qquad f \in C^{\infty}(M).
	\end{align*}
	Note that this definition is compatible with the embeddings of functions and measures in $C^{-\infty}(M)$ and that if $\nu$ is a measure on $M$, then $l_{\vartheta} \nu$ is just the image measure	of $\nu$ under the rotation $\vartheta$. 
	
	When the manifold $M$ is the Lie group $\SO(n)$, there is also a natural right-action $r_{\vartheta}$ of an element $\vartheta$ of a subgroup $H$ of $\SO(n)$ on a distribution $\nu$ (or function or measure) on $\SO(n)$ given by
	\begin{align*}
		\pair{r_{\vartheta} \nu}{f}_{C^{-\infty}} = \pair{\nu}{r_{\vartheta^{-1}} f}_{C^{-\infty}}, \qquad f \in C^{\infty}(\SO(n)),
	\end{align*}
	and $r_{\vartheta}f(\tau) = f(\tau\vartheta)$, $\tau, \vartheta \in \SO(n)$.
	
	A function, measure, or distribution on $\SO(n)$ is called left or right $H$-invariant, respectively, if it is invariant with respect to the left- or right-action of any element from $H$, respectively.
	
	Not only left and right multiplication in $\SO(n)$ induce natural left- and right-actions on functions, measures, and distributions, but also the inversion map on $\SO(n)$ gives rise
	to a natural involution on functions and distributions as follows: For $f \in C^{\infty}(\SO(n))$, the function $\widehat{f} \in C^{\infty}(\SO(n))$ is defined by
	\begin{align*}
		\widehat{f}(\vartheta) = f(\vartheta^{-1}), \qquad \vartheta \in \SO(n),
	\end{align*}
	and, for a distribution $\nu$ on  $\SO(n)$, we define $\widehat{\nu} \in C^{-\infty}(\SO(n))$ by
	\begin{align*}
		\pair{\widehat{\nu}}{f}_{C^{-\infty}} = \pair{\nu}{\widehat f}_{C^{-\infty}}, \qquad  f \in C^{\infty}(\SO(n)).
	\end{align*}
	In particular, it follows that
	\begin{align}\label{eq:pairingDistrHat}
		\pair{\nu}{f}_{C^{-\infty}} = \pair{\widehat{\nu}}{\widehat{f}}_{C^{-\infty}} \quad \text{ and } \quad \widehat{l_{\vartheta} f} = r_{\vartheta} \widehat{f},
	\end{align}
	where the second identity implies that the inversion map interchanges left- and right $H$-invariance of functions and distributions.
	
	\medskip
	
	Next, the convolution of $\nu \in C^{-\infty}(\SO(n))$ and $f \in C^\infty(\SO(n))$ is defined by
	\begin{align}\label{eq:convDistrib}
		(\nu \ast f)(\eta) = \pair{\nu}{l_\eta \widehat{f}}_{C^{-\infty}}, \quad \eta \in \SO(n),
	\end{align}
	in particular, $\pair{\nu}{f} = (\nu \ast \widehat{f})(\mathrm{id})$. Note that $\nu \ast f \in C^\infty(\SO(n))$. The convolution of distributions $\nu, \delta \in C^{-\infty}(\SO(n))$ is defined by associativity and \eqref{eq:convDistrib}, that is,
	\begin{align}\label{eq:defConvDistribByAssoc}
		(\nu \ast \delta) \ast f = \nu \ast (\delta \ast f)
	\end{align}
	for all $f \in C^\infty(\SO(n))$. Using \eqref{eq:convDistrib} and the comment beneath it, it is not hard to check that for measures $\mu, \sigma \in \mathcal{M}(\SO(n))$, definition \eqref{eq:defConvDistribByAssoc} is equivalent to
	\begin{align*}
		\int_{\SO(n)} f(\tau) d(\mu \ast \sigma)(\tau) = \int_{\SO(n)}\int_{\SO(n)} f(\eta\vartheta)d\mu(\vartheta)d\sigma(\eta), \quad f \in C^\infty(\SO(n)).
	\end{align*}
	Let us point out that $l_{\vartheta} (\nu \ast \delta) = (l_\vartheta \nu) \ast \delta$, $r_{\vartheta}(\nu \ast \delta) = \nu \ast (r_{\vartheta} \delta)$ and $\nu \ast (l_{\vartheta^{-1}}\delta) = (r_\vartheta \nu) \ast \delta$. In particular, if $\nu$ is left $H$-invariant for some subgroup $H$ of $\SO(n)$, so is $\nu \ast \delta$, and if $\delta$ is right $H$-invariant, so is $\nu \ast \delta$. If $\nu$ is right $H$-invariant, we may assume that $\delta$ is left $H$-invariant, and vice versa. Moreover, $\widehat{\nu \ast \delta} = \widehat{\delta} \ast \widehat{\nu}$.

	For later reference, we note an easy continuity property of convolution.
	\begin{lem}\label{lem:convergenceConv}
		If $\nu_j \in C^{-\infty}(\SO(n))$, $j \in \N$, converge to $\nu \in  C^{-\infty}(\SO(n))$, then
		$\nu_j \ast \delta$ converge to $\nu \ast \delta$ and $\delta \ast \nu_j$ converge to $\delta \ast \nu$ for every $\delta \in  C^{-\infty}(\SO(n))$.
	\end{lem}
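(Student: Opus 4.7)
The plan is to unwind both statements through the definitions and reduce them to the fact that convergence in $C^{-\infty}(\SO(n))$ means weak pointwise convergence against every fixed test function in $C^{\infty}(\SO(n))$. Since $\SO(n)$ is compact, $C_c^{\infty}(\SO(n)) = C^{\infty}(\SO(n))$, so it suffices to verify that $\pair{\nu_j \ast \delta}{f}_{C^{-\infty}} \to \pair{\nu \ast \delta}{f}_{C^{-\infty}}$ and $\pair{\delta \ast \nu_j}{f}_{C^{-\infty}} \to \pair{\delta \ast \nu}{f}_{C^{-\infty}}$ for every $f \in C^{\infty}(\SO(n))$.

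For the first convergence statement, the key observation is that the pairing can be rewritten as a convolution evaluated at the identity. Using $\pair{\mu}{g}_{C^{-\infty}} = (\mu \ast \widehat{g})(\mathrm{id})$ from the line below \eqref{eq:convDistrib} together with the associativity \eqref{eq:defConvDistribByAssoc}, I compute
\begin{align*}
	\pair{\nu_j \ast \delta}{f}_{C^{-\infty}} = ((\nu_j \ast \delta) \ast \widehat{f})(\mathrm{id}) = (\nu_j \ast (\delta \ast \widehat{f}))(\mathrm{id}) = \pair{\nu_j}{\widehat{\delta \ast \widehat{f}}}_{C^{-\infty}}.
\end{align*}
Since $\delta \ast \widehat{f} \in C^{\infty}(\SO(n))$ (as noted after \eqref{eq:convDistrib}), the function $\widehat{\delta \ast \widehat{f}}$ is a fixed smooth test function independent of $j$. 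Hence the weak convergence $\nu_j \to \nu$ in $C^{-\infty}(\SO(n))$ gives exactly $\pair{\nu_j}{\widehat{\delta \ast \widehat{f}}}_{C^{-\infty}} \to \pair{\nu}{\widehat{\delta \ast \widehat{f}}}_{C^{-\infty}} = \pair{\nu \ast \delta}{f}_{C^{-\infty}}$, which is the first claim.

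For the second statement I would reduce to the first via the involution $\mu \mapsto \widehat{\mu}$. First, the inversion is weakly continuous on distributions, because \eqref{eq:pairingDistrHat} yields $\pair{\widehat{\nu}_j}{f}_{C^{-\infty}} = \pair{\nu_j}{\widehat{f}}_{C^{-\infty}} \to \pair{\nu}{\widehat{f}}_{C^{-\infty}} = \pair{\widehat{\nu}}{f}_{C^{-\infty}}$, so $\widehat{\nu}_j \to \widehat{\nu}$. Applying the part already proved to the pair $(\widehat{\nu}_j, \widehat{\delta})$, I obtain $\widehat{\nu}_j \ast \widehat{\delta} \to \widehat{\nu} \ast \widehat{\delta}$ in $C^{-\infty}(\SO(n))$. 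Using the identity $\widehat{\nu \ast \delta} = \widehat{\delta} \ast \widehat{\nu}$ recorded in the paragraph before the lemma, this is the same as $\widehat{\delta \ast \nu_j} \to \widehat{\delta \ast \nu}$, and another application of weak continuity of the involution gives $\delta \ast \nu_j \to \delta \ast \nu$, as desired.

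There is no real obstacle here: the entire argument is a bookkeeping exercise in the definitions. The only point requiring a little care is to insist that the test function appearing after the associativity step, namely $\widehat{\delta \ast \widehat{f}}$, does not depend on $j$, which is what makes the weak convergence of $\nu_j$ directly applicable.
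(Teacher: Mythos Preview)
Your argument is correct; the paper does not actually give a proof of this lemma, stating it only as an ``easy continuity property of convolution,'' and your unwinding through the definitions together with the involution trick for the second assertion is a clean way to supply the omitted details.
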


	Letting $\bar{e} \in \S^{n-1}$ be fixed but arbitrary, we can consider $\S^{n-1}$ as homogeneous space $\SO(n)/\SO(n-1)$, where we denote by $\SO(n-1)$ the stabilizer of $\bar{e}$. Similarly, by fixing a subspace $\overline{E} \in \Gr_{i,n}$, we identify $\Gr_{i,n} = \SO(n)/\mathrm{S}(\OO(i) \times \OO(n-i))$, where $\mathrm{S}(\OO(i) \times \OO(n-i))$ denotes the stabilizer of $\overline{E}$.
	
	In the following, let $H$ be one of the subgroups $\SO(n-1)$ or $\mathrm{S}(\OO(i) \times \OO(n-i))$, $1 \leq i \leq n-1$, of $\SO(n)$, and denote by $\pr_H: \SO(n) \to \SO(n)/H$ the canonical projection.
	
	Then the pullback $(\pr_H)^\ast: C^\infty(\SO(n)/H) \to C^\infty(\SO(n))$ and the pushforward $(\pr_H)_\ast: C^\infty(\SO(n)) \to C^\infty(\SO(n)/H)$ are defined by
	\begin{align*}
		(\pr_H)^\ast f(\eta) = f(\pr_H \eta) \quad \text{ and } \quad (\pr_H)_\ast g(\eta H) = \int_H g(\eta \tau) d\tau, \quad \eta \in \SO(n),
	\end{align*}
	for all $f \in C^\infty(\SO(n)/H)$ and $g \in C^\infty(\SO(n))$, where the measure on $H$ is the unique invariant probability measure. They are naturally extended to distributions by duality, that is, for $\nu \in C^{-\infty}(\SO(n)/H)$ and $\delta \in C^{-\infty}(\SO(n))$,
	\begin{align*}
		\pair{(\pr_H)^\ast \nu}{g} = \pair{\nu}{(\pr_H)_\ast g} \quad \text{ and } \quad \pair{(\pr_H)_\ast \delta}{f} = \pair{\nu}{(\pr_H)^\ast f}
	\end{align*}
	for all $f \in C^\infty(\SO(n)/H)$ and $g \in C^\infty(\SO(n))$. Clearly, $(\pr_H)^\ast \nu$ is right $H$-invariant, while any right $H$-invariant distribution on $\SO(n)$ can be represented as a pullback of a distribution on $\SO(n)/H$, using that $(\pr_H)_\ast \circ (\pr_H)^\ast = \mathrm{id}$.
	
	We can now use pullback and pushforward to define convolution on $\SO(n)/H$. Indeed, for $\nu, \delta \in C^{-\infty}(\SO(n)/H)$, the convolution of $(\pr_H)^\ast \nu$ and $(\pr_H)^\ast \delta$ is right $H$-invariant and therefore descends to a distribution on $\SO(n)/H$. This operation is again denoted by $\ast$. In a similar way, a distribution $\nu$ on $\SO(n)/H_1$ can be convoluted with a distribution $\delta$ on $\SO(n)/H_2$ for two different subgroups $H_1, H_2$ of $\SO(n)$, yielding a distribution $\nu \ast \delta$ on $\SO(n)/H_2$. Moreover, $\nu \ast \delta$ will be left $H_1$-invariant.
	
	In the following, we will need explicit formulas for the convolution in three cases: First, for $f \in C(\Gr_{i,n})$ and $\mu \in \mathcal{M}(\S^{n-1})$,
	\begin{align}\label{eq:convGrSph}
		(f \ast \mu)(u) = \int_{\Gr_{i,n}} f(\vartheta_u F) d\widehat{\mu}(F), \quad u \in \S^{n-1},
	\end{align}
	where $\vartheta_u \in \SO(n)$ is chosen from $\pr_{\SO(n-1)}^{-1}(u)$, that is, $\vartheta_u \bar{e} = u$. Second, for $g \in C(\S^{n-1})$ and left $\SO(n-1)$-invariant $\mu \in \mathcal{M}(\S^{n-1})$,
	\begin{align}\label{eq:ConvSphSphMeas}
		(g \ast \mu)(u) = \int_{\S^{n-1}} g(\vartheta_uv)d\mu(v), \quad u \in \S^{n-1}
	\end{align}
	Note here that as $\mu$ is left $\SO(n-1)$-invariant, $\widehat{\mu} = \mu$.
	
	Finally, for $g = \bar{g}(\pair{\cdot}{\bar e}) \in C(\S^{n-1})$ and $\mu \in \mathcal{M}(\S^{n-1})$,
	\begin{align}\label{eq:ConvSphMeasSph}
		(\mu \ast g)(u) = \int_{\S^{n-1}} g(\vartheta_u^{-1}v)d\mu(v) = \int_{\S^{n-1}} \bar{g}(\pair{u}{v})d\mu(v), \quad u \in \S^{n-1}.
	\end{align}

	\section{Proof of \texorpdfstring{\cref{mthm:repEvenMinkVal}}{Proposition A}}\label{sec:prfIneq}
	In this section, we will first recall necessary background from valuation theory, which we then use to deduce \cref{mthm:repEvenMinkVal}. The main observation here is that for every continuous, translation invariant Minkowski valuation $\Phi: \K^n \to \K^n$, we can define a real-valued valuation $\varphi: \K^n \to \R^n$ by
	\begin{align*}
		\varphi(K) = h(\Phi K, \bar{e}), \quad K \in \K^n.
	\end{align*}
	Clearly, $\varphi$ is continuous and translation invariant. If $\Phi$ is additionally $\SO(n)$ equivariant, then $\varphi$ is invariant under $\SO(n-1)$, the stabilizer of $\bar e$. Moreover, in this case, $\varphi$ determines $\Phi$ completely, as $h(\Phi K, \eta \bar e) = \varphi(\eta^{-1}K)$, $\eta \in \SO(n)$. Consequently, we will use results for the associated real-valued valuation to deduce analogous statements for Minkowski valuations.
	
	To this end, denote by $\Val(\R^n)$ the space of continuous, translation invariant, real valued valuations on $\R^n$, and by $\Val_i^{\pm}(\R^n)$ its subspaces of even resp.\ odd valuations homogeneous of degree $i \in \{0, \dots, n\}$. The space $\Val(\R^n)$ becomes a Banach space when equipped with the topology of uniform convergence on compact subsets of $\K^n$.
	
	By a result of Klain~\cite{Klain2000}, every even valuation in $\Val_i(\R^n)$ gives rise to a continuous function on $\Gr_{i,n}$. This identification, called the Klain map $\Kl_i: \Val_i^+(\R^n) \to C(\Gr_{i,n})$, is defined implicitly by the equation
	\begin{align*}
		\varphi(K) = (\Kl_{i} \varphi)(E) V_i^E(K), \quad K \in \K(E),
	\end{align*}
	using that, by a result by Hadwiger~\cite{Hadwiger1957}, every $\varphi \in \Val_i^+(\R^n)$ coincides on convex bodies contained in an $i$-dimensional subspace $E$ (denoted by $\K(E)$) with a constant multiple $(\Kl_{i} \varphi)(E)$ of the volume $V_i^E$ on $E$.
	
	\medskip
	
	It is sometimes more convenient to work with the subspace $\Val^\infty(\R^n)$ (and similarly $\Val^{\pm,\infty}_i(\R^n)$) of smooth valuations, that is, those $\varphi \in \Val(\R^n)$ for which the map $\mathrm{GL}(\R^n) \ni g \mapsto \varphi(g^{-1}(\cdot)) \in \Val(\R^n)$ is infinitely differentiable as a map from a smooth manifold to a Banach space. The space $\Val^\infty(\R^n)$ becomes a graded algebra, when endowed with the Alesker product (see, e.g., \cite{Alesker2004b}). See, e.g., \cites{Alesker2006,Alesker2007,Alesker2008,Alesker2006b} for a thorough study of smooth valuations.
	
	Further, in \cite{Alesker2014}, the space $\Val_i^{\pm,-\infty}(\R^n)$ of $i$-homogeneous, even resp.\ odd generalized valuations was defined by
	\begin{align*}
		\Val_i^{\pm,-\infty}(\R^n) = \left(\Val_{n-i}^{\pm, \infty}(\R^n)\right)^\ast \otimes \mathrm{Dens}(\R^n),
	\end{align*}
	where we denote by $(\cdot)^\ast$ the topological dual space and by $\mathrm{Dens}(\R^n)$ the one-dimensional space of densities on $\R^n$. As we tacitly fixed the standard Euclidean structure on $\R^n$, we can identify $\Val_i^{\pm, -\infty}(\R^n) \cong (\Val_{n-i}^{\pm, \infty}(\R^n))^\ast$. The standard pairing of $\Val_i^{\pm, -\infty}(\R^n)$ and $\Val_{n-i}^\infty(\R^n)$ is denoted by $\pair{\cdot}{\cdot}_{\Val^{-\infty}}$. 
	
	The Alesker product gives rise to a non-degenerate pairing on $\Val^\infty(\R^n)$, called the (Alesker-)Poincar\'e duality, and, thus, we can embed $\Val_i^{\pm, \infty}(\R^n)$ into $\Val_i^{\pm, -\infty}(\R^n)$. Using an extension of the product in \cite{Alesker2011b}, allowing to take the product of a continuous and a smooth valuation, this embedding can be extended to yield a map $\widetilde{\pd}$,
	\begin{align*}
		\Val^\infty(\R^n) \subset \Val(\R^n) \xhookrightarrow{\quad\widetilde{\pd}\quad} \Val^{-\infty}(\R^n).
	\end{align*}
	The same is true for $\Val_i(\R^n)$ and $\Val_i^{\pm}(\R^n)$. Let us note that the product was further extended to a partial product on $\Val^{-\infty}(\R^n)$ in \cite{Alesker2009}.

	\medskip
	
	In the following, we will need an extension of the Klain map to the space of generalized valuations, established in \cite{Alesker2014}. This extension goes hand in hand with an extension of the Crofton map $\Cr_i: C(\Gr_{i,n}) \to \Val_i^+(\R^n)$, which is defined for $f \in C(\Gr_{i,n})$ by
	\begin{align}\label{eq:croftCont}
		(\Cr_i f) (K) = \int_{\Gr_{i,n}} V_i(K|E) f(E)dE, \quad K \in \K^n.
	\end{align}
	A direct calculation shows that $\Kl_i \circ \Cr_{i} = C_i$, where $C_i: C(\Gr_{i,n}) \to C(\Gr_{i,n})$ denotes the cosine transform on $\Gr_{i,n}$ (see, e.g., \cite{Alesker2004} for the definition).
	
	In the following, we write $f^\perp$ for the function $E \mapsto f(E^\perp)$, $E \in \Gr_{n-i,n}$, $f \in C(\Gr_{i,n})$, and correspondingly for measures.
	
	\begin{thm}[\cite{Alesker2014}]\label{thm:AlFExtCrKl}
		Let $1 \leq i \leq n-1$. The Crofton map $\Cr_i$ and the Klain map $\Kl_i$ can be extended uniquely by continuity to
		\begin{align*}
			\widetilde {\Cr_{i}}: C^{-\infty}(\Gr_{i,n}) \rightarrow \Val^{+, -\infty}_i, \quad \widetilde {\Kl_{i}}: \Val^{+, -\infty}_i \rightarrow C^{-\infty}(\Gr_{i,n}),
		\end{align*}
		where $\widetilde \Cr_i$ is surjective and $\widetilde \Kl_i$ is injective. Moreover, $\widetilde {\Cr_{i}}$ is adjoint to $\Kl_{n-i}$, 
		\begin{align}\label{eq:thmAlFExtCrKlAdjRel}
			\langle\widetilde {\Cr_{i}} (\psi), \varphi \rangle_{\Val^{-\infty}} = \langle \psi, (\Kl_{n-i} \varphi)^\perp \rangle_{C^{-\infty}},
		\end{align}
		for $\psi \in C^{-\infty}(\Gr_{i,n})$ and $\varphi \in \Val^{+, \infty}_{n-i}$, and the extensions satisfy $\widetilde\Kl_i \circ \widetilde\Cr_i=C_i$.
	\end{thm}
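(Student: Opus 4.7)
The plan is to build both extensions simultaneously via an adjoint principle: once the smooth Crofton and Klain maps are shown to be mutually adjoint between the Alesker--Poincar\'e pairing on valuations and the standard pairing on the Grassmannian, they each admit a unique weakly continuous extension to the distributional setting, with the adjointness becoming the \emph{defining} formula for $\widetilde{\Cr_i}$ and, symmetrically, for $\widetilde{\Kl_i}$.

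The key preliminary step is the smooth adjoint identity
\begin{align*}
\langle \Cr_i f, \varphi\rangle_{\Val^{-\infty}} = \langle f, (\Kl_{n-i}\varphi)^\perp\rangle_{C^{-\infty}}
\end{align*}
for $f \in C^\infty(\Gr_{i,n})$ and $\varphi \in \Val^{+,\infty}_{n-i}$, where the left-hand pairing is the Alesker--Poincar\'e pairing on smooth valuations, viewed through the embedding $\widetilde{\pd}$. By linearity in $f$ and the integral form of $\Cr_i$, this reduces to the pointwise identity $\langle V_i(\cdot|E), \varphi\rangle_{\Val^{-\infty}} = (\Kl_{n-i}\varphi)(E^\perp)$ for each $E \in \Gr_{i,n}$, which follows from the explicit evaluation of the Alesker product of the smooth valuation $V_i(\cdot|E)$ with $\varphi$, using the Klain-function characterization of smooth even valuations on subspaces.

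Having this identity, I would define $\widetilde{\Cr_i}\psi \in (\Val^{+,\infty}_{n-i})^\ast = \Val^{+,-\infty}_i$ for $\psi \in C^{-\infty}(\Gr_{i,n})$ by the right-hand side of \eqref{eq:thmAlFExtCrKlAdjRel}, which is meaningful since $(\Kl_{n-i}\varphi)^\perp \in C^\infty(\Gr_{i,n})$ by smoothness of $\varphi$ and of the orthogonal complement diffeomorphism. Continuity of $\widetilde{\Cr_i}$ is automatic, the smooth adjoint identity shows that it extends $\Cr_i$, and uniqueness of the extension follows from the density of $C^\infty(\Gr_{i,n})$ in $C^{-\infty}(\Gr_{i,n})$. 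Symmetrically, I would set $\langle \widetilde{\Kl_i}\mu, g\rangle_{C^{-\infty}} := \langle \mu, \Cr_{n-i}g^\perp\rangle_{\Val^{-\infty}}$ for $g \in C^\infty(\Gr_{i,n})$ and $\mu \in \Val^{+,-\infty}_i$; the same identity with degrees swapped shows that $\widetilde{\Kl_i}$ extends $\Kl_i$.

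For surjectivity and injectivity, the adjoint principle reduces both claims to their duals: surjectivity of $\widetilde{\Cr_i}$ is equivalent to injectivity of $\varphi \mapsto (\Kl_{n-i}\varphi)^\perp$ on $\Val^{+,\infty}_{n-i}$, which is Klain's classical injectivity theorem for smooth even valuations; injectivity of $\widetilde{\Kl_i}$ is dual to the density of the smooth Crofton image in $\Val^{+,\infty}_{n-i}$, a consequence of Alesker's irreducibility theorem. For the cosine composition, applying the two defining formulas in succession yields
\begin{align*}
\langle \widetilde{\Kl_i}\widetilde{\Cr_i}\psi, g\rangle_{C^{-\infty}} = \langle \psi, (\Kl_{n-i}\Cr_{n-i}g^\perp)^\perp\rangle_{C^{-\infty}} = \langle \psi, (C_{n-i}g^\perp)^\perp\rangle_{C^{-\infty}},
\end{align*}
and the identity $(C_{n-i}g^\perp)^\perp = C_i g$, which follows from the symmetry $|\cos(E,F)| = |\cos(E^\perp,F^\perp)|$ of the cosine kernel, together with self-adjointness of $C_i$ on smooth functions, yields $\widetilde{\Kl_i}\circ\widetilde{\Cr_i} = C_i$ in the distributional sense. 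I expect the main obstacle to be the smooth adjoint identity: the pointwise formula $\langle V_i(\cdot|E),\varphi\rangle = (\Kl_{n-i}\varphi)(E^\perp)$ requires concrete work with the Alesker product on mixed-type smooth valuations, while a secondary difficulty lies in the density argument for the smooth Crofton image, which draws on the full strength of the irreducibility theorem rather than being a direct computation.
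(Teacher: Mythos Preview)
The paper does not prove this theorem; it is quoted from Alesker--Faifman \cite{Alesker2014} and used as a black box in the proof of \cref{mthm:repEvenMinkVal}. There is therefore no ``paper's own proof'' to compare against.

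Your outline is nonetheless close in spirit to the argument in \cite{Alesker2014}: the extensions are indeed constructed by duality via the smooth adjoint identity, and the composition with $C_i$ follows exactly as you describe. Two points deserve care, however. First, your reduction of the smooth adjoint identity to the pointwise formula $\langle V_i(\cdot|E),\varphi\rangle_{\Val^{-\infty}} = (\Kl_{n-i}\varphi)(E^\perp)$ implicitly uses the extended product of a \emph{continuous} valuation (the fixed-subspace projection $V_i(\cdot|E)$ is not smooth) with a smooth one; this is legitimate but should be flagged as relying on the extension from \cite{Alesker2011b} mentioned in the text, together with a continuity argument allowing the pairing to pass under the integral over $\Gr_{i,n}$. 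Second, and more substantively, the implication ``injectivity of $\Kl_{n-i}$ $\Rightarrow$ surjectivity of its adjoint $\widetilde{\Cr_i}$'' is not automatic: for an adjoint map between duals of Fr\'echet spaces, injectivity of the predual map yields only weak-$\ast$ density of the image, not surjectivity. One needs that $\Kl_{n-i}$ has closed range (equivalently, is a topological embedding), which in \cite{Alesker2014} is obtained from representation-theoretic input (a Casselman--Wallach type argument). The same issue arises for the injectivity of $\widetilde{\Kl_i}$: density of the smooth Crofton image alone does not suffice without a closed-range statement. These are the genuine technical obstacles in the original proof, and your proposal underestimates them.
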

	As a consequence of \cref{thm:AlFExtCrKl}, Alesker and Faifman~\cite{Alesker2014} deduced the following representation for any $\varphi \in \Val_i^+(\R^n)$ similar to \eqref{eq:croftCont}. Here, a body $K \in \K^n$ is said to be of class $C^\infty_+$, if its boundary hypersurface is a smooth submanifold of $\R^n$ with everywhere positive Gauss curvature (see, e.g., \cite{Schneider2014}*{Sec.~2.5}).
	\begin{lem}[\cite{Alesker2014}*{Lem.~4.7}]\label{lem:AlFRepCri}
		Let $1 \leq i \leq n-1$ and suppose that $\varphi \in \Val_i^{+}$ and $\widetilde \pd\, \varphi = \widetilde \Cr_i \psi$ for some $\psi \in C^{-\infty}(\Gr_{i,n})$. If $K \in \K^n$ is of class $C^\infty_+$, then
		\begin{align}\label{eq:lemAlFRepCriClaim}
			\varphi(K) = \langle \psi, V_i(K|\cdot) \rangle_{C^{-\infty}}.
		\end{align}
	\end{lem}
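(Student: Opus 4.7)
The plan is to exhibit, for the $C^\infty_+$ body $K$, a smooth valuation $\eta_K \in \Val_{n-i}^{+,\infty}(\R^n)$ that simultaneously (a) represents evaluation at $K$ through the extended Alesker--Poincar\'e pairing, $\langle \widetilde{\pd}\,\varphi,\eta_K\rangle_{\Val^{-\infty}} = \varphi(K)$ for every $\varphi \in \Val_i^+(\R^n)$, and (b) has Klain function $\Kl_{n-i}(\eta_K)(F)=V_i(K|F^\perp)$, so that $(\Kl_{n-i}\eta_K)^\perp = V_i(K|\cdot)$ as a smooth function on $\Gr_{i,n}$ (smoothness on the Grassmannian being precisely the reason for the $C^\infty_+$ hypothesis on $K$). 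Granted (a) and (b), the lemma collapses to a one-line computation using the adjointness \eqref{eq:thmAlFExtCrKlAdjRel} of \cref{thm:AlFExtCrKl}:
\begin{align*}
\varphi(K)
&= \langle \widetilde{\pd}\,\varphi,\eta_K\rangle_{\Val^{-\infty}}
= \langle \widetilde{\Cr_i}\,\psi,\eta_K\rangle_{\Val^{-\infty}} \\
&= \langle \psi,(\Kl_{n-i}\eta_K)^\perp\rangle_{C^{-\infty}}
= \langle \psi, V_i(K|\cdot)\rangle_{C^{-\infty}}.
\end{align*}

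The natural candidate is the normalized mixed-volume valuation $\eta_K(L):=\binom{n}{i}V(K[i],L[n-i])$, which lies in $\Val_{n-i}^{+,\infty}(\R^n)$ because $K$ is of class $C^\infty_+$ (so its associated Monge--Amp\`ere differential form is smooth in the Alesker sense). Property (b) is then a standard Cauchy-type identity for mixed volumes: if $L\in\K(F)$ with $F\in\Gr_{n-i,n}$, then
\begin{align*}
V(K[i],L[n-i]) = \tbinom{n}{i}^{-1}V_i(K|F^\perp)\,V_{n-i}^F(L),
\end{align*}
so that $\eta_K|_{\K(F)}=V_i(K|F^\perp)\,V_{n-i}^F$, which by the defining formula of the Klain embedding is exactly the statement $\Kl_{n-i}(\eta_K)(F)=V_i(K|F^\perp)$.

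Property (a) is the crux, and I expect it to be the main obstacle. It says that the smooth valuation $\eta_K$ is Alesker--Poincar\'e dual to the ``evaluation at $K$'' functional on $\Val^{-\infty}(\R^n)$. This is precisely what the partial continuous-times-smooth product of \cite{Alesker2011b} delivers: the top-degree component of the extended product $\varphi\cdot\eta_K$ yields $\varphi(K)$, since this is well known for smooth $\varphi$ by Alesker--Poincar\'e duality applied to the mixed-volume valuation, and is then promoted to continuous $\varphi$ by the very construction of $\widetilde{\pd}$. A route closer in spirit to the rest of the paper, which avoids invoking the partial product in its full strength, is weak approximation: the identity \eqref{eq:lemAlFRepCriClaim} is immediate from the ordinary Crofton formula \eqref{eq:croftCont} when $\psi\in C^\infty(\Gr_{i,n})$, because then $\widetilde{\Cr_i}\,\psi=\Cr_i\,\psi \in \Val_i^{+,\infty}(\R^n)$ and injectivity of $\widetilde{\pd}$ forces $\varphi=\Cr_i\,\psi$; one then extends to arbitrary $\psi\in C^{-\infty}(\Gr_{i,n})$ by weak density of smooth functions, using continuity of $\widetilde{\Cr_i}$ together with continuity of the functional $\langle\,\cdot\,,\eta_K\rangle_{\Val^{-\infty}}$ on $\Val_i^{+,-\infty}(\R^n)$ --- which is of course property (a) once more, but now only needed on the image of $\widetilde{\Cr_i}$, where it can be tracked through the approximants.
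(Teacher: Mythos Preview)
The paper does not supply its own proof of this lemma; it is quoted from \cite{Alesker2014}*{Lem.~4.7} and used as a black box. Your reconstruction is correct and is essentially the argument given in that reference: one introduces the mixed-volume valuation $\eta_K = \binom{n}{i}V(K[i],\,\cdot\,[n-i])$, notes that it lies in $\Val_{n-i}^{+,\infty}$ precisely because $K$ is of class $C^\infty_+$, computes its Klain function via the Cauchy--Kubota identity to obtain $(\Kl_{n-i}\eta_K)^\perp = V_i(K|\cdot)$, and then combines the adjointness relation \eqref{eq:thmAlFExtCrKlAdjRel} with the identification $\langle\widetilde{\pd}\,\varphi,\eta_K\rangle_{\Val^{-\infty}} = \varphi(K)$.

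Your hesitation about property (a) is understandable but can be dispelled: the map $\widetilde{\pd}$ is, by the paper's own description, constructed from the continuous-times-smooth Alesker product of \cite{Alesker2011b}, so that $\langle\widetilde{\pd}\,\varphi,\eta\rangle_{\Val^{-\infty}}$ is by definition the degree-$n$ coefficient of $\varphi\cdot\eta$ for $\eta\in\Val^\infty_{n-i}$; and for $\eta=\eta_K$ this coefficient equals $\varphi(K)$ by the standard fact that Alesker multiplication by the mixed-volume valuation $\eta_K$ realizes evaluation at $K$. This is the clean route. Your alternative approximation argument, as you yourself note, loops back to property (a) and does not stand on its own; it should be dropped rather than patched.
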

	
	We can now apply \cref{thm:AlFExtCrKl} and \cref{lem:AlFRepCri} to deduce \cref{mthm:repEvenMinkVal}.
	
	\begin{proof}[Proof of \cref{mthm:repEvenMinkVal}]
		If $\Phi_i$ is an even, continuous, $\SO(n)$ equivariant, and translation invariant Minkowski valuation, then the associated real-valued valuation $\varphi_i \in \Val^{+}_i$ is $\SO(n-1)$ invariant. Using the extended Poincar\'e duality map $\widetilde {\pd}$, we may embed $\varphi_i$ into the space $\Val_i^{+,-\infty}$. Thus, by \cref{thm:AlFExtCrKl}, there exists a Crofton distribution
		$\psi \in C^{-\infty}(\Gr_{i,n})$ such that
		\begin{align*}
			\widetilde {\pd}\, \varphi_i = \widetilde {\Cr_{i}} \psi.
		\end{align*}
		Since both $\widetilde {\pd}$ and $\widetilde {\Cr_{i}}$ are $\SO(n)$ equivariant, we may assume without loss of generality that $\psi$ is $\SO(n-1)$ invariant.
		If $K \in \mathcal{K}^n$ is of class $C^\infty_+$, then we are now able to compute $h(\Phi_i K, \eta \bar e)$ for $\eta \in \mathrm{SO}(n)$ by applying \cref{lem:AlFRepCri}, \eqref{eq:pairingDistrHat}, and \eqref{eq:convDistrib},
		\begin{align*}
			h(\Phi_i K, \eta \bar e)        &= \varphi_i(\eta^{-1} K) = \pair{\psi}{l_{\eta^{-1}} V_i(K |\, \cdot\,) }_{C^{-\infty}}  \\
			&= \pair{\widehat{\psi}}{r_{\eta^{-1}} \widehat{V_i(K|\,\cdot\,)} }_{C^{-\infty}} \\
			&= (V_i(K|\,\cdot\,) \ast \hat \psi) (\eta \bar e).
		\end{align*}
		Setting $\delta = \widehat{\psi}$ and noting that the $\SO(n-1)$ invariance of $\psi \in C^{-\infty}(\Gr_{i,n})$ implies the $\OO(i) \times \OO(n-i)$ invariance of $\delta \in C^{-\infty}(\S^{n-1})$ (see \cite{Schuster2015}*{Prop.~A}), we obtain $h(\Phi_i K, \cdot) = V_i(K|\,\cdot\,) \ast \delta$ for every $K \in \K^n$ of class $C^\infty_+$. The result for a general convex body $K \in \mathcal{K}^n$ follows by approximation (see, e.g., \cite{Schneider2014}*{Thm.~3.4.1}) and Lemma \ref{lem:convergenceConv}: Indeed, let $K \in \K^n$ be arbitrary and approximate $K$ by a sequence $K_j \in \K^n$ of bodies of class $C^\infty_+$. Then the projection functions $V_i(K_j|\cdot)$ converge uniformly to $V_i(K|\cdot)$. In particular, they converge weakly as distributions. By Lemma~\ref{lem:convergenceConv} (using that convolution on $\Gr_{i,n}$ is induced by convolution on $\SO(n)$), $V_i(K_j|\cdot) \ast \delta \to V_i(K|\cdot) \ast \delta$.
		
		On the other hand, by the continuity of $\Phi_i$, $V_i(K_j|\cdot) \ast \delta=h(\Phi_i K_j, \cdot) \to h(\Phi_i K, \cdot)$ uniformly, in particular also weakly. As limits are unique in the space of distributions, $h(\Phi_i K, \cdot)$ and $V_i(K|\cdot) \ast \delta$ must define the same distribution, that is, the distribution $V_i(K|\cdot) \ast \delta$ is given by the continuous function $h(\Phi_i K, \cdot)$.
		
		\medskip
		
		It remains to show the uniqueness of the spherical Crofton distribution $\delta = \widehat{\psi}$. To this end, we first note that by its $\SO(n-1)$ invariance, $\psi$ is uniquely determined by its values on $\SO(n-1)$ invariant functions in $C^{\infty}(\Gr_{i,n})$. Next, using Theorem~\ref{thm:AlFExtCrKl}, we see that for $f \in C^{\infty}(\Gr_{i,n})$,
		\begin{equation} \label{prfthm1a}
			\pair{\psi}{C_i f}_{C^{-\infty}} = \pair{\widetilde{\Kl}_i(\widetilde {\Cr_{i}} \psi)}{f }_{C^{-\infty}} =
			\pair{\widetilde{\Kl}_i(\widetilde{\pd}\, \varphi_i)}{f}_{C^{-\infty}}
		\end{equation}
		depends only on the valuation $\varphi_i$. If $f$ is in addition $\SO(n-1)$ invariant, then it was shown in \cite{Schuster2015}*{Lem.~3.3} that
		\begin{align*}
			C_i f = \frac{n\kappa_i\kappa_{n-i}}{2\kappa_{n-1}} {\binom{n}{i}}^{-1} \widehat{C \hat{f}},
		\end{align*}
		where $C$ denotes the spherical cosine transform (see, e.g., \cite{Groemer1996}*{Sec.~3.4}), and
		when $\hat{f}$ is interpreted as an $\mathrm{S}(\OO(i) \times \OO(n-i))$ invariant function in $C^{\infty}(\S^{n-1})$. Since such functions are even (see \cite{Schuster2015}*{Prop.~A}) and the spherical cosine transform $C$ is bijective on even functions in $C^{\infty}(\S^{n-1})$, $C_i$ is a bijection on $\SO(n-1)$ invariant functions in $C^{\infty}(\Gr_{i,n})$. Consequently, by \eqref{prfthm1a}, the values
		\begin{align*}
			\pair{\psi}{f}_{C^{-\infty}} =\pair{\widetilde{\Kl}_i(\widetilde{\pd}\, \varphi_i)}{C_i^{-1}f }_{C^{-\infty}}
		\end{align*}
		for $\SO(n-1)$ invariant $f \in C^{\infty}(\Gr_{i,n})$ are uniquely determined by $\varphi_i$.
	\end{proof}

	\section{Proof of \texorpdfstring{\cref{mthm:ineqEvenMinkVal}}{Theorem B}}
	In this section, we give a proof of \cref{mthm:ineqEvenMinkVal}. To this end, we first prove the left-hand inequality of \eqref{eq:mthmIneq}, which holds in larger generality. Next, we will prove some auxiliary result for $\SO(n-1)$ orbits in the Grassmanian, which is needed for the proof of the equality cases of \cref{mthm:ineqEvenMinkVal}, given then. In the remainder of the section, we will show in \cref{ex:NonZonoidPosCrofton} that \cref{mthm:ineqEvenMinkVal} significantly extends previous results by \cite{Berg2020}.
	
	\medskip
	
	First, let us state the following well-known direct consequence of \cref{thm:repGenFct}.
	\begin{lem}[see, e.g., \cite{Schuster2006}*{Lem.~6.3}]
		\label{lem:propNormMinkVal}
		Let $\Phi_i$ be an $i$-homogeneous Minkowski valuation, $1 \leq i \leq n-1$, generated by $f \in L^1(\S^{n-1})$. If $K \in \K^n$, then
		\begin{align*}
			\Phi_i B^n = r_{\Phi_i} B^n \qquad \text{and} \qquad W_{n-1}(\Phi_i K) = r_{\Phi_i} W_{n-i}(K),
		\end{align*}
		where $r_{\Phi_i} = \int_{\S^{n-1}} f(u) du$.
	\end{lem}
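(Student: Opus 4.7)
The plan is to derive both identities from the convolution representation $h(\Phi_i K, \cdot) = S_i(K, \cdot) \ast f$ provided by \cref{thm:repGenFct}, combined with the explicit formula \eqref{eq:ConvSphMeasSph} for the convolution on $\S^{n-1}$ of a measure with a zonal function. Since the generating function $f$ is $\SO(n-1)$ invariant, writing $f(u) = \bar f(\pair{u}{\bar e})$ and invoking \eqref{eq:ConvSphMeasSph} yields the pointwise identity
\begin{align*}
    h(\Phi_i K, u) = \int_{\S^{n-1}} \bar f(\pair{u}{v}) \, dS_i(K, v), \qquad u \in \S^{n-1}.
\end{align*}

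For the first assertion, I would use that $S_i(B^n, \cdot)$ is $\OO(n)$ invariant and is therefore a scalar multiple of the spherical Lebesgue measure $\sigma$; the standard relation $S_i(B^n, \S^{n-1}) = n W_{n-i}(B^n) = n\kappa_n$ fixes the multiple, forcing $S_i(B^n, \cdot) = \sigma$. Substituting this into the displayed formula and exploiting the rotation invariance of $\sigma$ via the change of variables $v = \vartheta w$, where $\vartheta \in \SO(n)$ carries $\bar e$ to $u$, collapses the integrand to $f(w)$ and gives $h(\Phi_i B^n, u) = \int_{\S^{n-1}} f(w) \, d\sigma(w) = r_{\Phi_i}$, independent of $u$. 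Hence $\Phi_i B^n = r_{\Phi_i} B^n$.

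For the second identity, the plan is to start from the classical representation $W_{n-1}(L) = \tfrac{1}{n} \int_{\S^{n-1}} h(L, u) \, d\sigma(u)$ applied to $L = \Phi_i K$, insert the pointwise identity above, and swap the order of integration via Fubini's theorem. The resulting inner integral $\int_{\S^{n-1}} \bar f(\pair{u}{v}) \, d\sigma(u)$ is then, by the same rotation-invariance argument, constant in $v$ and equal to $r_{\Phi_i}$; factoring it out leaves $\tfrac{1}{n}\, r_{\Phi_i}\, S_i(K, \S^{n-1}) = \tfrac{1}{n}\, r_{\Phi_i} \cdot n W_{n-i}(K) = r_{\Phi_i} W_{n-i}(K)$.

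No substantive obstacle is anticipated: once \eqref{eq:ConvSphMeasSph} has been invoked, the argument reduces to a Fubini swap plus rotation invariance of $\sigma$. The only point requiring mild attention is the consistent bookkeeping of normalizations (Lebesgue versus probability measures) implicit in the convolution conventions of \cref{sec:bgConv}, together with a brief density argument to extend \eqref{eq:ConvSphMeasSph} from continuous zonal $g$ to $f \in L^1(\S^{n-1})$. In this sense, the lemma is essentially a direct transcription of \cite{Schuster2006}*{Lem.~6.3} to the general setting afforded by \cref{thm:repGenFct}.
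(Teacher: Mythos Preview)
The proposal is correct. The paper does not give its own proof of this lemma; it states it as a well-known direct consequence of \cref{thm:repGenFct} and refers to \cite{Schuster2006}*{Lem.~6.3}, so your argument via \eqref{eq:ConvSphMeasSph}, rotation invariance of $\sigma$, and Fubini is exactly the standard verification one would supply.
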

	
	We further need the following inequality due to Lutwak~\cite{Lutwak1975} (used for deducing Urysohn's inequality from the classical Blaschke--Santal\'o inequality), stating that if $K \in \K^n$ contains the origin in its interior, then
	\begin{align}\label{eq:PolMeanWidthIneqLutwak}
		V_n(K^\circ)(nW_{n-1}(K))^{n} \geq \kappa_n,
	\end{align}
	with equality if and only if $K$ is a centered Euclidean ball.
	
	The left-hand side of \eqref{eq:mthmIneq} reads
	\begin{prop}	\label{prop:ineqEvenMinkValQm}
		Suppose that $\Phi_i : \mathcal{K}^n \rightarrow \mathcal{K}^n$, $1 \leq i \leq n - 1$, is a continuous, translation invariant, and $\SO(n)$ equivariant Minkowski valuation, mapping bodies with non-empty interior to such bodies. If $K \in \K^n$ has non-empty interior, then
		\begin{align}\label{eq:MinkValEstQm}
			V_n(\Phi_i^\circ K)(nW_{n-i}(K))^n \geq V_n(\Phi_i^\circ B^n).
		\end{align}
		Equality holds if and only if $\Phi_i K$ is a Euclidean ball.
	\end{prop}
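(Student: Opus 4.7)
The plan is to reduce \eqref{eq:MinkValEstQm} to Lutwak's polar mean-width inequality \eqref{eq:PolMeanWidthIneqLutwak}, applied to the body $\Phi_i K$, and to translate the resulting bound back to $K$ using the normalization identities collected in \cref{lem:propNormMinkVal}.

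A preliminary observation, used both to make $\Phi_i^\circ K$ meaningful and to align equality conditions later, is that the Steiner point of $\Phi_i K$ always lies at the origin. This follows from \cref{thm:repGenFct}: the generating function $f$ of $\Phi_i$ has center of mass at the origin, so its first-degree spherical harmonic component (viewed as a zonal distribution) vanishes, and convolution with $f$ acts as a Fourier multiplier and therefore annihilates the first-degree component of $h(\Phi_i K, \cdot) = S_i(K, \cdot) \ast f$. By hypothesis, $\Phi_i K$ has non-empty interior, and the Steiner point of a body with non-empty interior lies in its interior, so $0 \in \mathrm{int}(\Phi_i K)$ and $\Phi_i^\circ K$ is a well-defined convex body of finite volume.

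Now for the main calculation: let $r_{\Phi_i} > 0$ denote the constant from \cref{lem:propNormMinkVal} (strictly positive since $\Phi_i B^n$ has non-empty interior). Applying \eqref{eq:PolMeanWidthIneqLutwak} to $\Phi_i K$ gives
\[
V_n(\Phi_i^\circ K) \bigl(n W_{n-1}(\Phi_i K)\bigr)^n \geq \kappa_n,
\]
with equality if and only if $\Phi_i K$ is a Euclidean ball centered at the origin. Substituting $W_{n-1}(\Phi_i K) = r_{\Phi_i} W_{n-i}(K)$ from \cref{lem:propNormMinkVal}, together with $V_n(\Phi_i^\circ B^n) = V_n(r_{\Phi_i}^{-1} B^n) = \kappa_n / r_{\Phi_i}^n$ (using $\Phi_i B^n = r_{\Phi_i} B^n$), and dividing both sides by $r_{\Phi_i}^n$, produces \eqref{eq:MinkValEstQm} directly. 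Combining Lutwak's equality characterization with the Steiner-point remark above shows that equality in \eqref{eq:MinkValEstQm} holds precisely when $\Phi_i K$ is a Euclidean ball, because in our setting any such ball is automatically centered at the origin. I do not foresee any genuinely hard step — the proof is essentially just Lutwak's inequality combined with the normalization lemma, with the Steiner-point observation serving as the small bridge between the two formulations of the equality condition.
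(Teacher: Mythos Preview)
Your proposal is correct and follows essentially the same route as the paper: apply Lutwak's polar mean-width inequality \eqref{eq:PolMeanWidthIneqLutwak} to $\Phi_i K$, then use \cref{lem:propNormMinkVal} to rewrite $W_{n-1}(\Phi_i K)$ and $V_n(\Phi_i^\circ B^n)$ in terms of $W_{n-i}(K)$ and $r_{\Phi_i}$, with the Steiner-point observation handling both the well-definedness of $\Phi_i^\circ K$ and the equality case. The only cosmetic difference is that the paper cites \cites{Alesker2011, Parapatits2012, Berg2014} for the Steiner-point fact, whereas you sketch it via the multiplier property of convolution from \cref{thm:repGenFct}.
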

	Let us point out that the conditions of \cref{prop:ineqEvenMinkValQm} are trivially fulfilled, if $\Phi_i$ admits a non-negative spherical Crofton measure which is non-zero.
	\begin{proof}[Proof of \cref{prop:ineqEvenMinkValQm}]
		Let $K \in \K^n$ with non-empty interior. Then, by assumption, $\Phi_i K$ has non-empty interior, which contains the Steiner point of $\Phi_iK$. As the Steiner point of $\Phi_iK$ lies at the origin (see, e.g., \cites{Alesker2011, Parapatits2012, Berg2014}), $0 \in \mathrm{int}\, \Phi_i K$, and by \eqref{eq:PolMeanWidthIneqLutwak},
		\begin{align*}
			V_n(\Phi_i^\circ K)(nW_{n-1}(\Phi_i K))^{n} \geq \kappa_n.
		\end{align*}
		Thus, the inequality follows directly from \cref{lem:propNormMinkVal}. Next, note that as its Steiner point always lies at the origin, $\Phi_i K$ is a Euclidean ball if and only if it is a centered Euclidean ball. The equality cases therefore follow directly from the equality cases of \eqref{eq:PolMeanWidthIneqLutwak}.
	\end{proof}
	
	\begin{lem}\label{lem:OrbitsSOn1}
		Let $E \in \Gr_{i,n}$, $1 \leq i \leq n-2$, such that neither $\bar e \in E$ nor $E \subset \bar e^\perp$. Set
		\begin{align*}
			t_E = \max_{v \in \S^{i-1}(E)}\pair{v}{\bar e} \in (0,1).
		\end{align*}
		If $\eta \in \SO(n)$ is such that
		\begin{align*}
			\eta(H_{\bar{e}, t_E} \cap \S^{n-1}) \cap (H_{\bar{e}, t_E} \cap \S^{n-1}) \neq \emptyset,
		\end{align*}
		then there exists $G \in \Gr_{i,n}$ such that $E, G, \eta^{-1}G$ are in the same $\SO(n-1)$ orbit.
	\end{lem}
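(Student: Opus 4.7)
The plan is to reduce the statement to a dimension count by first recording that, for $1 \leq i \leq n - 2$, the $\SO(n-1)$-orbits in $\Gr_{i,n}$ are parameterized by the single invariant $t_{(\cdot)}$: indeed, $\SO(n-1)$ acts transitively on pairs consisting of a unit vector in $\bar e^\perp$ together with an orthogonal $(i-1)$-dimensional subspace of $\bar e^\perp$, and this is precisely the data encoded by $t_F = \|\pi_F(\bar e)\|$ for a subspace $F$ with $0 < t_F < 1$. Hence it will be enough to produce $G \in \Gr_{i,n}$ with $t_G = t_{\eta^{-1}G} = t_E$.

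Next, I would extract from the hypothesis a unit vector $u \in \S^{n-1}$ with $\pair{u}{\bar e} = t_E$ and $\pair{u}{\eta \bar e} = t_E$: the first follows from $u \in H_{\bar e, t_E} \cap \S^{n-1}$, while $u \in \eta(H_{\bar e, t_E} \cap \S^{n-1})$ is equivalent to $\pair{\eta^{-1} u}{\bar e} = t_E$, i.e.\ $\pair{u}{\eta \bar e} = t_E$. The core step is then to construct $G$ of dimension $i$ so that both orthogonal projections $\pi_G(\bar e)$ and $\pi_G(\eta \bar e)$ coincide with $t_E u$. Decomposing
\[\bar e = t_E u + (\bar e - t_E u), \qquad \eta \bar e = t_E u + (\eta \bar e - t_E u),\]
with each residue perpendicular to $u$, this demand translates to
\[u \in G \subseteq W := (\bar e - t_E u)^\perp \cap (\eta \bar e - t_E u)^\perp.\]

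Existence of such a $G$ is straightforward: $u$ lies in $W$ since $\pair{u}{\bar e - t_E u} = \pair{u}{\eta \bar e - t_E u} = 0$, and $W$, as the intersection of two linear hyperplanes in $\R^n$, satisfies $\dim W \geq n - 2$. The hypothesis $i \leq n - 2$ thus permits extending $\{u\}$ to an $i$-dimensional subspace $G$ of $W$. For this $G$ one reads off $\pi_G(\bar e) = \pi_G(\eta \bar e) = t_E u$, whence $t_G = t_E$, and
\[\pi_{\eta^{-1} G}(\bar e) = \eta^{-1} \pi_G(\eta \bar e) = t_E\, \eta^{-1} u\]
has norm $t_E$, so $t_{\eta^{-1}G} = t_E$ as well. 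Combined with the orbit parameterization, $E$, $G$, and $\eta^{-1}G$ then lie in a common $\SO(n-1)$-orbit.

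The only genuinely nontrivial point is spotting the correct definition of $G$: requiring the witness $u$ to be, simultaneously, the direction in $G$ nearest to both $\bar e$ and $\eta \bar e$ forces $G$ into a subspace of codimension at most two, and the assumption $i \leq n - 2$ is precisely what keeps this solvable. The remaining verifications are short linear-algebraic computations together with the standard description of $\SO(n-1)$-orbits on the Grassmannian.
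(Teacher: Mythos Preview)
Your proof is correct and follows essentially the same approach as the paper's: both arguments first identify $t_{(\cdot)}$ as the complete $\SO(n-1)$-orbit invariant, take the witness vector $u$ (the paper's $w_1$) from the intersection of the two spherical subspheres, and then complete $u$ to an $i$-dimensional subspace $G$ using a dimension count that hinges on $i \leq n-2$. The only cosmetic difference is that the paper cuts out the complementary $(i-1)$-dimensional piece from the triple intersection $\bar e^\perp \cap (\eta\bar e)^\perp \cap w_1^\perp$ (dimension $\geq n-3$), whereas you work in the slightly larger space $W = (\bar e - t_E u)^\perp \cap (\eta\bar e - t_E u)^\perp$ (dimension $\geq n-2$) and verify the invariant via the projection identity $\pi_G(\bar e)=\pi_G(\eta\bar e)=t_E u$; this is a minor streamlining of the same idea.
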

	\begin{proof}
		First, note that two subspaces $E, F \in \Gr_{i,n}$ are in the same $\SO(n-1)$ orbit if and only if $t_E = t_F$. Indeed, $E \mapsto t_E$ clearly is invariant under $\SO(n-1)$ and the converse claim is trivial for $t_E=t_F\in \{0,1\}$. For $t_E \in (0,1)$, observe that $t_E = \pair{v_E}{\bar e}$ for a unique unit vector $v_E \in E$, as $\dim (E \cap \bar{e}^\perp) = i-1$, and similarly for $F$. Moreover, $v_E | \bar{e}^\perp$ resp. $v_F | \bar{e}^\perp$ are orthogonal to $E \cap \bar{e}^\perp$ resp. $F \cap \bar{e}^\perp$ and have the same length $\sqrt{1-t_E^2}$. We can therefore find $\tau \in \SO(n-1)$ mapping $E \cap \bar{e}^\perp$ to $F \cap \bar{e}^\perp$ and $v_E | \bar{e}^\perp$ to $v_F | \bar{e}^\perp$. Clearly, this implies that $\tau E = F$, that is, $E$ and $F$ are in the same $\SO(n-1)$ orbit.
		
		Next, let $\eta \in \SO(n)$ as above and take $w_1 \in \eta(H_{\bar{e}, t_E} \cap \S^{n-1}) \cap (H_{\bar{e}, t_E} \cap \S^{n-1})$. As each of the spaces $(\eta \bar{e})^{\perp}$, $w_1^{\perp}$, and $\bar{e}^{\perp}$ is $(n - 1)$-dimensional, their intersection has at least dimension $n - 3$. Since $i \leq n - 2$, we have $i - 1 \leq n - 3$ and, thus, there exists an orthonormal system $\{w_2, \ldots, w_i\}$ in $(\eta \bar{e})^{\perp} \cap w_1^{\perp} \cap \bar{e}^{\perp}$.
		
		 Define $G = \mathrm{span}\,\{w_1, w_2, \ldots, w_i\}$. It follows that $v_G = w_1$ and, thus, by the first step, that $E$ and $G$ are in the same $\SO(n-1)$ orbit. Moreover, since $\eta^{-1} G = \mathrm{span}\,\{\eta^{-1} w_1, \eta^{-1} w_2, \ldots, \eta^{-1} w_i\}$, where
		$\eta^{-1} w_1 \in H_{\bar{e},t} \cap \mathbb{S}^{n-1}$ and $\eta^{-1} w_j \in \bar{e}^{\bot}$, for all $j \geq 2$, it follows that $v_{\eta^{-1}G} = \eta^{-1}w_1$. Thus, by the above, $\eta^{-1}G$ is in the same $\SO(n-1)$ orbit as $E$ and $G$.
	\end{proof}
	
	We are now ready to complete the proof of \cref{mthm:ineqEvenMinkVal} by showing the right-hand inequality of \eqref{eq:mthmIneq}. We will prove it without any assumption on the normalization. However, to exclude pathological cases, we will assume that the Minkowski valuation is non-trivial, that is, it is not of the form $\Phi_i(K) = \{0\}$ for all $K \in \K^n$. As the cases $i=1$ and $i=n-1$ were previously proved in \cite{Hofstaetter2021} and \cite{Haberl2019}, respectively, and the proofs of the equality cases need slightly different arguments in co-degree $1$, we will restrict ourselves to $2 \leq i \leq n-2$.
	\begin{thm}	\label{thm:ineqEvenMinkValAffQm}
		Suppose that the spherical Crofton distribution of a non-trivial even Minkow\-ski valuation $\Phi_i : \mathcal{K}^n \rightarrow \mathcal{K}^n$, $2 \leq i \leq n - 2$, is non-negative.
		
		If $K \in \K^n$ has non-empty interior, then
		\begin{align}\label{eq:PosGenEstAffQ}
			V_n(\Phi_i^\circ K)A_{n-i}(K)^{n} \leq V_n(\Phi_i^\circ B^n)\kappa_n^n. 
		\end{align}
		Equality holds if and only if $K$ is of constant $i$-brightness, that is, there exists $c \in \R$ such that $V_i(K|F) = c$ for all $F \in \Gr_{i,n}$.
	\end{thm}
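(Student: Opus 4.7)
The plan is to combine \cref{mthm:repEvenMinkVal} with a Jensen-type inequality that converts \eqref{eq:PosGenEstAffQ} into a direct consequence of the definition of $A_{n-i}$. By \cref{mthm:repEvenMinkVal} and formula \eqref{eq:convGrSph}, the non-negative spherical Crofton measure $\delta$ of $\Phi_i$ yields
\[
h(\Phi_i K, u) = \int_{\Gr_{i,n}} V_i(K|\vartheta_u E)\, d\hat\delta(E), \qquad u \in \S^{n-1},
\]
where $\hat\delta$ is a non-negative $\SO(n-1)$-invariant measure on $\Gr_{i,n}$ of total mass $M := \hat\delta(\Gr_{i,n}) > 0$ (positive by non-triviality of $\Phi_i$). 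Since $K$ has non-empty interior, $V_i(K|\cdot)$ is strictly positive on $\Gr_{i,n}$, so Jensen's inequality applied to the strictly convex function $t \mapsto t^{-n}$ on $(0,\infty)$ with respect to the probability measure $\hat\delta/M$ gives
\[
h(\Phi_i K, u)^{-n} \le M^{-n-1} \int_{\Gr_{i,n}} V_i(K|\vartheta_u E)^{-n}\, d\hat\delta(E).
\]

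I would then integrate this pointwise bound against $du$ on $\S^{n-1}$ (using $V_n(\Phi_i^\circ K) = \tfrac{1}{n}\int_{\S^{n-1}} h(\Phi_i K, u)^{-n}\, du$), apply Fubini, and lift the inner integral over $u$ from $\S^{n-1} \cong \SO(n)/\SO(n-1)$ to $\SO(n)$, where invariance of the Haar measure makes it independent of $E$; by the definition of the affine quermassintegral,
\[
\int_{\S^{n-1}} V_i(K|\vartheta_u E)^{-n}\, du = n \kappa_n \int_{\Gr_{i,n}} V_i(K|G)^{-n}\, dG = \frac{n\, \kappa_n^{n+1}}{\kappa_i^n}\, A_{n-i}(K)^{-n}.
\]
Collecting constants yields $V_n(\Phi_i^\circ K)\, A_{n-i}(K)^n \le M^{-n}\kappa_n^{n+1}/\kappa_i^n$. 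Evaluating at $K = B^n$ (where $V_i(B^n|\cdot) \equiv \kappa_i$, so $\Phi_i B^n = \kappa_i M\, B^n$ and $A_{n-i}(B^n) = \kappa_n$) identifies the right-hand side with $V_n(\Phi_i^\circ B^n)\kappa_n^n$, completing the inequality.

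For the equality characterization, strict convexity of $t \mapsto t^{-n}$ combined with continuity of $V_i(K|\cdot)$ forces equality in Jensen iff, for each $u \in \S^{n-1}$, the map $E \mapsto V_i(K|\vartheta_u E)$ is constant on $\mathrm{supp}(\hat\delta)$, say equal to $c(u)$. Taking $u = \bar e$ with $\vartheta_{\bar e} = \mathrm{Id}$ reveals $V_i(K|\cdot) \equiv c(\bar e) =: c_0$ on the $\SO(n-1)$-invariant set $\mathrm{supp}(\hat\delta)$. The main obstacle is to upgrade this to constancy on all of $\Gr_{i,n}$; this is precisely the role of \cref{lem:OrbitsSOn1}. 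Since $\Phi_i$ is non-trivial with $2 \le i \le n-2$, one first verifies that $\mathrm{supp}(\hat\delta)$ must contain an $\SO(n-1)$-orbit $\mathcal{O}$ with parameter $t_{E_0} \in (0,1)$ (the degenerate orbits $\{E : \bar e \in E\}$ and $\{E : E \subseteq \bar e^\perp\}$ alone being too thin to support a non-trivial $i$-homogeneous valuation in this range). Writing $\eta = \vartheta_{u_2}^{-1}\vartheta_{u_1}$, \cref{lem:OrbitsSOn1} supplies, whenever $\eta(H_{\bar e, t_{E_0}} \cap \S^{n-1}) \cap (H_{\bar e, t_{E_0}} \cap \S^{n-1}) \ne \emptyset$, a subspace $G \in \mathcal{O}$ with $\eta^{-1}G \in \mathcal{O}$, which forces $c(u_1) = V_i(K|G) = V_i(K|\eta^{-1}G) = c(u_2)$. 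Since this intersection condition is satisfied on a neighborhood of the identity in $\SO(n)$, the function $c$ is locally constant on $\S^{n-1}$ and hence constant by connectedness. Finally, $\bigcup_{u \in \S^{n-1}} \vartheta_u \mathcal{O} = \Gr_{i,n}$, so $V_i(K|\cdot) \equiv c_0$ globally, i.e., $K$ is of constant $i$-brightness. Conversely, if $V_i(K|\cdot) \equiv c$, then $h(\Phi_i K, \cdot) \equiv cM$ and $\Phi_i K = cM\, B^n$, and both sides of \eqref{eq:PosGenEstAffQ} reduce to the common value computed for Euclidean balls.
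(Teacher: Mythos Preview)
Your derivation of the inequality is correct and matches the paper's: Jensen for $t\mapsto t^{-n}$ against the probability measure $\hat\delta/M$, followed by lifting the $u$-integral to $\SO(n)$ and identifying $A_{n-i}(K)$.

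The equality analysis, however, contains a genuine gap. Your claim that the degenerate $\SO(n-1)$-orbits $\{E:\bar e\in E\}$ and $\{E:E\subseteq\bar e^{\perp}\}$ are ``too thin to support a non-trivial $i$-homogeneous valuation in this range'' is false. For instance, if $\hat\delta$ is the unique $\SO(n-1)$-invariant probability measure on $\{F\in\Gr_{i,n}:\bar e\in F\}$, then
\[
h(\Phi_i K,u)=\int_{\{G\,:\,u\in G\}} V_i(K|G)\,dG
\]
defines a perfectly good non-trivial even Minkowski valuation for every $2\le i\le n-2$; similarly for the other degenerate orbit. So you cannot assume $\mathrm{supp}(\hat\delta)$ contains an orbit with $t_{E_0}\in(0,1)$, and \cref{lem:OrbitsSOn1} alone is insufficient. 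The paper treats these two degenerate cases separately: if $\bar e\in E$ for some $E\in\mathrm{supp}(\hat\delta)$, then the equality condition forces $V_i(K|G)=c_{\vartheta_u}$ for all $G\ni u$, and since $i\ge 2$ any two points $u,v\in\S^{n-1}$ lie in a common $G\in\Gr_{i,n}$, whence $c_{\vartheta_u}$ is constant; the case $E\subseteq\bar e^{\perp}$ is dual and uses $i\le n-2$. Only when neither degenerate orbit meets the support does \cref{lem:OrbitsSOn1} enter.

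A secondary issue: your chain $c(u_1)=V_i(K|G)=V_i(K|\eta^{-1}G)=c(u_2)$ is not quite right as written, since by your own definition $c(u_1)=V_i(K|\vartheta_{u_1}G)$ for $G\in\mathrm{supp}(\hat\delta)$, not $V_i(K|G)$. The paper parameterizes the constants by $\eta\in\SO(n)$ rather than $u\in\S^{n-1}$, writing $c_\eta=V_i(K|\eta F)$ for $F\in\mathrm{supp}(\hat\delta)$; then with $G,\eta^{-1}G\in\mathrm{supp}(\hat\delta)$ one gets $c_\tau=V_i(K|\tau G)=V_i(K|\tau\eta\cdot\eta^{-1}G)=c_{\tau\eta}$, which is the correct local-constancy statement.
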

	\begin{proof}
		Let $K \in \K^n$ with non-empty interior. By assumption, $\Phi_i K$ is given by
		\begin{align*}
			h(\Phi_i K, \cdot) = V_i(K|\cdot) \ast \delta,
		\end{align*}
		where $\delta$ is an $\OO(i) \times \OO(n-i)$ invariant, non-negative distribution $\S^{n-1}$, that is a positive measure. Denoting the ($\SO(n-1)$ invariant) measure $\widehat{\delta}$ on $\Gr_{i,n}$ by $\mu$, we thus have by \eqref{eq:convGrSph}
		\begin{align}\label{eq:prfIneqRepInt}
			h(\Phi_i K, u) = \int_{\Gr_{i,n}}V_i(K|\vartheta_u F) d\mu(F), \quad u \in \S^{n-1},
		\end{align}
		for some $\vartheta_u \in \SO(n)$ with $\vartheta_u \bar e = u$. As \eqref{eq:PosGenEstAffQ} is invariant under scaling of $\Phi$, we may assume that $\Phi_i B^n = \kappa_i B^n$, that is, that $\mu$ is a probability measure. Let us further point out that by \eqref{eq:prfIneqRepInt}, $\Phi_i$ is monotone and thus $\Phi_i K$ contains a small ball around the origin whenever $K$ has non-empty interior.
		
		By using polar coordinates and \eqref{eq:prfIneqRepInt},
		\begin{align*}
			V_n(\Phi_i^\circ K) = \frac{1}{n} \int_{\S^{n-1}} h(\Phi_i K, u)^{-n}du = \frac{1}{n}\int_{\S^{n-1}} \left(\int_{\Gr_{i,n}} V_i(K|\vartheta_u F)d\mu(F)\right)^{-n} du,
		\end{align*}
		which, by Jensen's inequality for the $\mu$-integral is estimated by
		\begin{align}\label{eq:prfIneqJensen}
			V_n(\Phi_i^\circ K) \leq \frac{1}{n}\int_{\S^{n-1}} \int_{\Gr_{i,n}} V_i(K|\vartheta_u F)^{-n}d\mu(F) du.
		\end{align}
		Next, as both $\Phi_i$ and polarity intertwine $\SO(n)$ maps, $V_n(\Phi_i^\circ K) = V_n(\Phi_i^\circ (\tau K))$ for all $\tau \in \SO(n)$. Integrating with respect to the Haar probability measure on $\SO(n)$, denoted by $d\tau$, therefore yields
		\begin{align*}
			V_n(\Phi_i^\circ K) = \int_{\SO(n)}\!\!\!\!\!V_n(\Phi_i^\circ (\tau K))d\tau \leq \frac{1}{n}\int_{\SO(n)}\int_{\S^{n-1}} \int_{\Gr_{i,n}} \!\!\!\!\!\!\!\! V_i(K|\tau^{-1}\vartheta_u F)^{-n}d\mu(F) dud\tau.
		\end{align*}
		By Fubini's theorem and since the Haar measure on $\SO(n)$ is invariant, we can rewrite the right-hand side to
		\begin{align*}
			\frac{1}{n}\int_{\S^{n-1}} \int_{\Gr_{i,n}} \int_{\SO(n)} \!\!\! V_i(K|\eta F)^{-n}d\eta d\mu(F) du = \kappa_n \int_{\Gr_{i,n}}V_i(K|E)^{-n} dE,
		\end{align*}
		where we used that $\mu(\Gr_{i,n}) = 1$ and that by the uniqueness of the Haar measure
		\begin{align*}
			\int_{\SO(n)} f(\eta F) d\eta = \int_{\Gr_{i,n}} f(E) dE, \qquad \forall f \in C(\Gr_{i,n}), F \in \Gr_{i,n}.
		\end{align*}
		Identifying $A_{n-i}(K)$, we thus have shown the claimed inequality
		\begin{align*}
			V_n(\Phi_i^\circ K) \leq \frac{\kappa_n^{n+1}}{\kappa_i^n} A_{n-i}(K)^{-n} = \kappa_n^{n}V_n(\Phi_i^\circ K)A_{n-i}(K)^{-n}.
		\end{align*}
		It remains to prove the equality cases. To this end, we note that by the first part of the proof, equality holds in \eqref{eq:PosGenEstAffQ} if and only if equality holds in every instance of \eqref{eq:prfIneqJensen}. By the equality conditions of Jensen's inequality, this is the case if and only if for almost every $\tau \in \SO(n)$ and $u \in \S^{n-1}$, there exist $c_{\tau, u} \in \R$ such that 
		\begin{align}\label{eq:IneqCondEq1}
			V_i(K|\tau^{-1}\vartheta_u F) = c_{\tau, u}, \qquad \text{ for $\mu$-a.e. } F \in \Gr_{i,n}.
		\end{align}
		Since $\eta \mapsto V_i(K|\eta F)$ is continuous on $\SO(n)$, \eqref{eq:IneqCondEq1} holds indeed for every $\tau \in \SO(n)$ and $u \in \S^{n-1}$, and, by the continuity of $V_i(K|\cdot)$, for all $F \in \mathrm{supp}\,\mu$. Clearly, this is equivalent to the existence of $c_\eta \in \R$, $\eta \in \SO(n)$, such that
		\begin{align}\label{eq:IneqCondEq2}
			V_i(K|\eta F) = c_\eta, \qquad \text{ for all } F \in \mathrm{supp}\,\mu.
		\end{align}
		Since $\mu$ is non-zero, there exists a subspace $E \in \mathrm{supp}\,\mu$ and, by the invariance of $\mu$, also its $\SO(n-1)$ orbit is contained in $\mathrm{supp}\,\mu$. We distinguish now three cases:
		\begin{enumerate}
			\item \emph{There exists $E \in \mathrm{supp}\,\mu$ such that $\bar{e} \in E$.}\\
			Then the $\SO(n-1)$ orbit of $E$ consists of all $i$-dimensional subspaces containing $\bar{e}$, and \eqref{eq:IneqCondEq2} implies that
			\begin{align*}
				V_i(K|\eta F) = c_\eta, \qquad \text{ for all } F \in \Gr_{i,n} \text{ with } \bar e \in F.
			\end{align*}
			Consequently, for every $u \in \S^{n-1}$, $V_i(K|G) = c_{\vartheta_u}$ for all $G \in \Gr_{i,n}$ containing $u$. Since $i \geq 2$, for every $u,v \in \S^{n-1}$, we find $G \in \Gr_{i,n}$ containing both of them. Hence, $c_{\vartheta_u} = c_{\vartheta_v}$, that is, $u \mapsto c_{\vartheta_u}$ is independent of $u \in \S^{n-1}$, concluding the proof in this case.
			\smallskip
			\item \emph{There exists $E \in \mathrm{supp}\,\mu$ such that $E \subset \bar{e}^\perp$.}\\
			Then the $\SO(n-1)$ orbit of $E$ consists of all $i$-dimensional subspaces that are contained in $\bar{e}^\perp$, and we infer as in the previous case that
			\begin{align*}
				V_i(K|G) = c_{\vartheta_u}, \qquad \text{ for all } G \in \Gr_{i,n} \text{ such that } G \subset u^\perp.
			\end{align*}
			Since $i \leq n-2$, every such $G\in \Gr_{i,n}$ is contained in at least two hyperplanes $u^\perp$, $v^\perp$, and we can conclude as in (i) that $V_i(K|\cdot)$ is constant.
			\smallskip
			\item \emph{For all $E \in \mathrm{supp}\,\mu$ neither $\bar e \in E$ nor $E \subset \bar{e}^\perp$.}\\
			Let $t = \max_{v \in \S^{i-1}(E)}\pair{v}{\bar{e}} \in (0,1)$ and note that the set of all $\eta \in \SO(n)$ such that
			\begin{align*}
				\eta(H_{\bar{e}, t} \cap \S^{n-1}) \cap (H_{\bar{e}, t} \cap \S^{n-1}) \neq \emptyset
			\end{align*}
			contains an open neighborhood of the identity in $\SO(n)$. For a fixed such $\eta$, by \cref{lem:OrbitsSOn1}, there exists a subspace $G \in \Gr_{i,n}$ with both $G$ and $\eta^{-1}G$ in the $\SO(n-1)$ orbit of $E$, and, thus, in the support of $\mu$. Hence, we have on one hand, by \eqref{eq:IneqCondEq2}, that $c_\tau = V_i(K|\tau G)$ for every $\tau \in \SO(n)$. On the other hand, again by \eqref{eq:IneqCondEq2}, we have $V_i(K|\tau G)= V_i(K|\tau\eta (\eta^{-1}G)) = c_{\tau\eta}$. Consequently,
			\begin{align*}
				c_\tau = V_i(K|\tau G) = c_{\tau\eta},
			\end{align*}
			that is, the continuous map $\eta \mapsto c_\eta$ is locally constant on $\SO(n)$ and therefore constant as $\SO(n)$ is connected. This concludes the proof.
		\end{enumerate}
	\end{proof}

	Finally, we will give examples of Minkowski valuations of degree $2 \leq i \leq n-2$, which are eligible for our \cref{mthm:ineqEvenMinkVal}, but not for the preceding theorem in \cite{Berg2020}. In \cite{Berg2020}, inequality~\eqref{eq:mthmIneq} is shown for all Minkowski valuations $\Phi_i$, such that
	\begin{align*}
		h(\Phi_i K, \cdot) = S_i(K, \cdot) \ast h(Z, \cdot), \quad K \in \K^n,
	\end{align*}
	where $Z \in \K^n$ is a zonoid of revolution, that is, a body of revolution which is the limit of finite Minkowski sums of segments. Alternatively, $Z$ is a zonoid of revolution, if and only if there exists a non-negative, $\SO(n-1)$ invariant measure $\rho\in\mathcal{M}(\S^{n-1})$ such that (up to translations)
	\begin{align*}
		h(Z, u) = \int_{\S^{n-1}} |\pair{u}{v}| d\rho(v) = (C\rho)(u), \quad u \in \S^{n-1},
	\end{align*}
	where $C$ denotes the spherical cosine transform, see, e.g., \cite{Schneider2014}*{Thm.~3.5.3}. A body is called a generalized zonoid, if its support function up to translations is given as the spherical cosine transform of a signed measure.
	
	In order to compare \cref{mthm:ineqEvenMinkVal} and the result from \cite{Berg2020}, we will need the following theorem translating between the representations from \cref{mthm:repEvenMinkVal} and \cref{thm:repGenFct}. Here, $R_{i,j}:C(\Gr_{i,n}) \to C(\Gr_{j,n})$, $i\neq j \in \{1, \dots, n-1\}$, denotes the Radon transform, defined by
	\begin{align*}
		(R_{i,j}f)(F) = \int_{\Gr_{i,n}^F} f(E) dE, \quad F \in \Gr_{j,n},
	\end{align*}
	where $\Gr_{i,n}^F$ denotes the subset of $\Gr_{i,n}$ of all subspaces that contain resp. are contained in $F$, depending on whether $i > j$ or $i < j$. The transform is extended to measures and distributions using that the adjoint of $R_{i,j}$ is given by $R_{j,i}$.
		
	\begin{thm}[\cite{Schuster2015}*{Cor.~5.1}]
		\label{coro:translationGenMeasGenFct}
		Suppose that $\Phi_i$ is an even $i$-homogeneous Min\-kowski valuation such that $h(\Phi_i K, \cdot) = S_i(K, \cdot) \ast (C \nu)$, where $\nu$ is a signed measure on $\S^{n-1}$.
		Then the spherical Crofton measure $\mu$ of $\Phi_i$ is given by
		\begin{align*}
			\hat \mu = 2 \frac{\kappa_{n-1}}{\kappa_{i}} (R_{n-1,i} \nu^\perp).
		\end{align*}
	\end{thm}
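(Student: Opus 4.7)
The plan is to leverage the uniqueness statement of \cref{mthm:repEvenMinkVal} by rewriting the generating-function representation of $h(\Phi_i K,\cdot)$ into a convolution against $V_i(K|\,\cdot\,)$ and then reading off the spherical Crofton measure. Starting from $h(\Phi_i K,\cdot) = S_i(K,\cdot) \ast (C\nu)$, I would unfold the spherical convolution using the $\SO(n-1)$-invariance of $C\nu$ together with \eqref{eq:ConvSphMeasSph}, and then swap the order of integration via Fubini to obtain
\[
h(\Phi_i K,u) = \int_{\S^{n-1}}\!\!\int_{\S^{n-1}} |\pair{v}{\vartheta_u w}|\,dS_i(K,v)\,d\nu(w).
\]

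The crucial ingredient is the classical Cauchy--Kubota-type identity
\[
\int_{\S^{n-1}}|\pair{v}{u'}|\,dS_i(K,v) \;=\; \frac{2\kappa_{n-1}}{\kappa_i}\int_{\Gr_{i,n-1}((u')^\perp)} V_i(K|E)\,dE,
\]
which I would derive in two standard steps: first, rewrite the left-hand side as $n$ times the mixed volume $V([-u',u'],K[i],B^n[n-1-i])$ and apply the segment formula to obtain $2\,W^{(n-1)}_{n-1-i}(K|(u')^\perp)$; then convert this $(n-1)$-dimensional quermassintegral into an integral of projection functions by Kubota's formula within $(u')^\perp \cong \R^{n-1}$.

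Substituting back into the double integral, the inner Grassmannian integral is precisely the Radon transform value $R_{i,n-1}(V_i(K|\,\cdot\,))((u')^\perp)$. I would then transfer the integration in $w$ from $\S^{n-1}$ to $\Gr_{n-1,n}$ via the substitution $w\mapsto w^\perp$, turning $\nu$ (identified through its even part with a measure on $\Gr_{1,n}$) into $\nu^\perp$ on $\Gr_{n-1,n}$, and invoke the adjointness $\pair{R_{i,n-1}f}{\alpha}_{C^{-\infty}} = \pair{f}{R_{n-1,i}\alpha}_{C^{-\infty}}$ of the Radon transforms with swapped indices to rewrite the expression as
\[
h(\Phi_i K,u) = \frac{2\kappa_{n-1}}{\kappa_i}\int_{\Gr_{i,n}} V_i(K|\vartheta_u E)\,d(R_{n-1,i}\nu^\perp)(E).
\]
Comparing this with the Crofton representation $V_i(K|\,\cdot\,)\ast\widehat{\mu}$ unfolded via \eqref{eq:convGrSph}, the uniqueness statement of \cref{mthm:repEvenMinkVal} yields $\widehat{\mu} = 2\tfrac{\kappa_{n-1}}{\kappa_i}\,R_{n-1,i}\nu^\perp$ as claimed.

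The hard part, as so often with these identifications, will not be the analysis but the bookkeeping: one must verify that the identification of even signed measures on $\S^{n-1}$ with measures on $\Gr_{1,n}$, the involution $\widehat{\cdot}$ exchanging $\OO(i)\times\OO(n-i)$-invariant distributions on $\S^{n-1}$ with $\SO(n-1)$-invariant ones on $\Gr_{i,n}$, and the Haar normalizations built into the Radon transforms are all mutually compatible, so that the numerical constant comes out exactly to $2\kappa_{n-1}/\kappa_i$. Once this bookkeeping is set up properly, the only genuine integral-geometric input to the proof is the Cauchy--Kubota identity displayed above.
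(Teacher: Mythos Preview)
The paper does not prove this statement; it is quoted verbatim from \cite{Schuster2015}*{Cor.~5.1} and used as a black box. So there is no ``paper's own proof'' to compare against.

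Your argument is correct and is essentially the natural direct route. The Cauchy--Kubota identity you display is exactly right (segment formula for mixed volumes followed by Kubota in $(u')^\perp$), and once it is in hand the passage from $S_i(K,\cdot)\ast(C\nu)$ to $V_i(K|\,\cdot\,)\ast\widehat\mu$ is, as you say, bookkeeping plus Radon adjointness. Two small points worth tightening: first, the hypothesis only says $\nu$ is a signed measure, not that it is $\SO(n-1)$ invariant, but since $C\nu$ is the generating function (hence zonal) and $C$ is injective on even measures, you may replace $\nu$ by its zonal even part without loss; this is needed to justify writing the convolution via \eqref{eq:ConvSphMeasSph}. Second, rather than unfolding and re-folding, it is slightly cleaner to write $C\nu = \nu \ast |\pair{\,\cdot\,}{\bar e}|$, use associativity to get $S_i(K,\cdot)\ast(C\nu) = (CS_i(K,\cdot))\ast\nu$, and then recognise $CS_i(K,\cdot)$ directly as (a constant times) $R_{i,n-1}V_i(K|\,\cdot\,)$ evaluated at $(\,\cdot\,)^\perp$ via your Cauchy--Kubota identity --- this makes the appearance of $\nu^\perp$ and the Radon adjoint more transparent and reduces the number of places where a constant can slip.
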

	Clearly, \cref{coro:translationGenMeasGenFct} implies that if $\Phi_i$ is generated by the support function of a zonoid, then it admits a non-negative spherical Crofton distribution. In the next lemma, we use \cref{coro:translationGenMeasGenFct} to give a condition on a generating function of the form $Cg$ for some even $\SO(n-1)$ invariant function $g \in C(\S^{n-1})$. We will make use of cylinder coordinates, see, e.g., \cite{Groemer1996}*{Lem.~1.3.1},
	\begin{align}\label{eq:cylCoord}
		\int_{\S^{n-1}} \bar{f}(\pair{w}{u})du
		= \omega_{n-1} \int_{-1}^{1} \bar{f}(t) (1-t^2)^{\frac{n-3}{2}} dt,
	\end{align}
	for all $w \in \S^{n-1}$, where we define $\omega_{\alpha}:=2\pi^{\frac{\alpha}{2}}/\Gamma(\frac{\alpha}{2})$ for $\alpha>0$.
	\begin{lem}
		\label{lem:condMinkValPosGen}
		Suppose that $\Phi_i:\K^n \to \K^n$ is an even $i$-homogeneous Min\-kowski valuation such that $h(\Phi_i K, \cdot) = S_i(K, \cdot) \ast (C g)$, where $g=\bar{g}(\pair{\cdot}{\bar e})\in C(\S^{n-1})$ is even. Then the spherical Crofton measure of $\Phi_i$ is non-negative, if and only if
		\begin{align*}
			\int_0^1 \bar g (\alpha t) (1-t^2)^{\frac{n-i-3}{2}} d t \geq 0 \quad \forall \alpha \in [0,1].
		\end{align*}
	\end{lem}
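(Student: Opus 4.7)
The plan is to use Theorem \ref{coro:translationGenMeasGenFct} to translate the statement about the spherical Crofton measure $\mu$ of $\Phi_i$ into a statement about the Radon transform $R_{n-1,i}$ applied to the measure on $\mathrm{Gr}_{n-1,n}$ induced by the density $g$. Since $\nu := g\,du$ is $\mathrm{SO}(n-1)$ invariant and $g$ is even, its push-forward $\nu^\perp$ under $u \mapsto u^\perp$ is a well-defined signed measure on $\mathrm{Gr}_{n-1,n}$, and the theorem gives
\begin{align*}
\hat{\mu} = 2 \tfrac{\kappa_{n-1}}{\kappa_i}\, R_{n-1,i}\nu^\perp.
\end{align*}
Because inversion on $\mathrm{SO}(n)$ preserves positivity of measures, $\mu \geq 0$ if and only if $\hat{\mu} \geq 0$, so it suffices to analyze when the right-hand side is a non-negative measure on $\mathrm{Gr}_{i,n}$.

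Next I would compute $R_{n-1,i}\nu^\perp$ explicitly. Using the identification of $\mathrm{Gr}_{n-1,n}^F$ with the unit sphere in $F^\perp$ (via unit normal vectors), one obtains for each $F \in \mathrm{Gr}_{i,n}$
\begin{align*}
(R_{n-1,i}\nu^\perp)(F) = c \int_{\mathbb{S}(F^\perp)} g(u)\,du
\end{align*}
for a positive constant $c$ depending only on $n$ and $i$. Here we used that $g$ is even, so the identification of $\mathrm{Gr}_{n-1,n}$ with antipodal pairs in $\mathbb{S}^{n-1}$ causes no issues. Writing $\alpha = \alpha(F) := \|\bar e|F^\perp\| \in [0,1]$ and, when $\alpha > 0$, $\tilde e := (\bar e|F^\perp)/\alpha \in \mathbb{S}(F^\perp)$, we have $\langle u, \bar e\rangle = \alpha \langle u, \tilde e\rangle$ for every $u \in \mathbb{S}(F^\perp)$; the case $\alpha = 0$ (where $\bar e \perp F^\perp$, i.e., $\bar e \in F$) reduces to the constant $\bar g(0)$.

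Applying the cylinder coordinate formula \eqref{eq:cylCoord} on $\mathbb{S}(F^\perp) \cong \mathbb{S}^{n-i-1}$ with pole $\tilde e$, and then using that $\bar g$ is even, we get
\begin{align*}
\int_{\mathbb{S}(F^\perp)} g(u)\,du
= \omega_{n-i-1} \int_{-1}^{1} \bar g(\alpha t)(1-t^2)^{\frac{n-i-3}{2}}\,dt
= 2\,\omega_{n-i-1}\!\int_0^1 \bar g(\alpha t)(1-t^2)^{\frac{n-i-3}{2}}\,dt.
\end{align*}
Thus the density of $R_{n-1,i}\nu^\perp$ with respect to the invariant measure on $\mathrm{Gr}_{i,n}$ is, up to a positive multiplicative constant, a continuous function depending only on the parameter $\alpha(F) \in [0,1]$, namely $F \mapsto \int_0^1 \bar g(\alpha(F)\,t)(1-t^2)^{(n-i-3)/2}\,dt$.

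Finally, as $F$ ranges over $\mathrm{Gr}_{i,n}$ the value $\alpha(F)$ sweeps through the whole interval $[0,1]$. Since the density is continuous, the measure $\hat\mu$ is non-negative if and only if this density is non-negative at every such $\alpha$, which is precisely the stated condition. The main bookkeeping burden is tracking the normalising constants and confirming the correct exponent $(n-i-3)/2$ from the dimension of $\mathbb{S}(F^\perp)$; the conceptual step is the reduction via Theorem \ref{coro:translationGenMeasGenFct} and the observation that $\alpha(F)$ parametrises the $\mathrm{SO}(n-1)$-orbits in $\mathrm{Gr}_{i,n}$ on which the continuous density of $\hat\mu$ must be evaluated.
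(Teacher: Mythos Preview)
Your proposal is correct and follows essentially the same route as the paper: invoke Theorem~\ref{coro:translationGenMeasGenFct}, reduce non-negativity of $\hat\mu$ to non-negativity of the spherical integral $\int_{\mathbb{S}(F^\perp)} g(u)\,du$ for each $F$, and then evaluate this via cylinder coordinates on the $(n-i-1)$-sphere with pole along the projection of $\bar e$. The only cosmetic difference is that the paper parametrizes by $F \in \mathrm{Gr}_{n-i,n}$ (writing the integral as the dual Radon transform $(R_{1,n-i}g)(F)$ and using $\|\bar e|F\|$), whereas you parametrize by $F \in \mathrm{Gr}_{i,n}$ and use $\|\bar e|F^\perp\|$; these are the same computation.
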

	\begin{proof}
		By Theorem~\ref{coro:translationGenMeasGenFct}, the Crofton measure of $\Phi_i$ is positively generated, if and only if
		\begin{align}\label{eq:prfPosGenCond1}
			(R_{n-1,i} g^\perp)(F^\perp) = (R_{1,n-i} g)(F) = \int_{\S^{n-1} \cap F} g(w) d w \geq 0,
		\end{align}
		for every $F \in \Gr_{n-i,n}$. Noting that $\pair{w}{\bar e} = \pair{w}{\bar e | F}$ for every $w \in F$ and $\bar e|F \neq 0$ whenever $F \not \subset \bar e^\perp$, cylinder coordinates~\eqref{eq:cylCoord} yield
		\begin{align*}
			\int_{\S^{n-1} \cap F} g(w) d w = \omega_{n-i-1} \int_{-1}^{1} \bar{g}(t\|\bar e|F\|) (1-t^2)^{\frac{n-i-3}{2}} dt
		\end{align*}
		for almost all $F \in \Gr_{n-i,n}$. By continuity this holds for all $F$.
		
		As we vary $F \in \Gr_{n-i,n}$, $\|\bar e|F\|$ ranges from $0$ to $1$. Consequently, condition~\eqref{eq:prfPosGenCond1} is -- omitting the positive constants -- equivalent to
		\begin{align*}
			\int_{-1}^1 \tilde g (\alpha t) (1-t^2)^{\frac{n-i-3}{2}} d t \geq 0 \quad \forall \alpha \in [0,1].
		\end{align*}
		Since $g$ is assumed to be even, the claim follows.
	\end{proof}

	\medskip

	We are now ready to give the examples. It uses a generalization of an example from \cite{Gardner2006}*{Rem.~4.1.14} and \cite{Schneider1970}*{p.~69}, given in \cite{OrtegaMoreno2021} (see also \cite{Dorrek2017}*{Ex. 5.2(f)}). 
	
	\begin{exl}\label{ex:NonZonoidPosCrofton}
		Let $P_2^n(t) = \frac{1}{n-1}(nt^2 - 1)$ be the second Legendre polynomial and consider for $\alpha \in \R$ the function $h_\alpha(u)= 1 + \alpha P_2^n(\pair{u}{\bar e})$, $u \in \S^{n-1}$. By \cite{OrtegaMoreno2021}*{Prop.~5.4}, $h_\alpha \in C(\S^{n-1})$ is the support function of a convex body of revolution $L_\alpha$ if and only if
		\begin{align*}
			\alpha \in \left[-\frac{n-1}{2n-1}, \frac{n-1}{n+1}\right].
		\end{align*}
		For $\alpha$ in the given interval, we thus get a one-parametric family of convex bodies, where the support function is a sum of two (even) spherical harmonics. Hence, $h_\alpha$ lies in the image of the cosine transform, that is,
		\begin{align*}
			h_\alpha(u) = 1 + \alpha P_2^n(\langle u, \bar e \rangle) = C \left(\frac{1}{2 \kappa_{n-1}} (1 + (n+1) \alpha P_2^n(\langle \cdot, \bar e \rangle)\right),
		\end{align*}
		where we used the multipliers of the cosine transform (see, e.g., \cite{Groemer1996}*{Lemma~3.4.5}).
		
		Whenever $h_\alpha$ is a support function, the resulting convex body is therefore (by definition) a generalized zonoid. It is a zonoid, if the preimage of $h_\alpha$ under the cosine transform is non-negative, that is, by a direct calculation, exactly for $\alpha \geq -\frac{1}{n+1}$.
		
		By Lemma~\ref{lem:condMinkValPosGen}, the (generalized) zonoid with support function $h_\alpha$ generates a Minkowski valuation of degree $i$ with non-negative Crofton distribution, if and only if
		\begin{align*}
			\int_0^1 \left(1 + (n+1) \alpha P_2^n(\tau t) \right)(1-t^2)^{\frac{n-i-3}{2}} d t \geq 0 \quad \forall \tau \in [0,1].
		\end{align*}
		Calculating the integral using the Beta function yields that this condition is satisfied exactly for $\frac{n+1}{n-1}\alpha \in \left[ -\frac{n-i}{i}, 1 \right]$.
		
		Finally, intersecting the intervals we obtain that for all 
		\begin{align*}
			\alpha \in \left[\max\left\{- \frac{n-i}{i}\frac{n-1}{n+1}, \frac{n-1}{1-2n} \right\}, -\frac{1}{n+1}\right),
		\end{align*}
		$L_\alpha$ is a generalized zonoid (and not a zonoid) and the Minkowski valuation defined by $S_i(K,\cdot) \ast h_\alpha$ has a non-negative Crofton distribution. Let us point out that for a fixed $i$, we asymptotically get the interval $\left[ -\frac{n-1}{2n-1}, -\frac{1}{n+1}\right)$, while for fixed $n$ and $i<n-1$ the interval always has positive length, yielding an infinite family of examples.	\cref{mthm:ineqEvenMinkVal} therefore significantly extends the results from \cite{Berg2020}.
	\end{exl}

	\section{Existence of Extremals}\label{sec:exExtr}
	In this section, we first give a proof of \cref{mthm:existExtr}. Then we give an example of an even Minkowski valuation of degree $1$, which is not monotone, but satisfies the assumptions of \cref{mthm:existExtr}. This shows that (weak) monotonicity is not a necessary conditions for the existence of extremals. In the remainder of the section, we give a proof of the analogous problem for non-polar inequalities in the setting of convex bodies of revolution.
	
	First, let us recall the well-known isoperimetric inequality relating two different quermassintegrals (see, e.g., \cite{Schneider2014}*{Ch.~7}),
	\begin{align}
		\label{eq:IsoperimetricIneqTwoQuermass}
		\kappa_n^{-i} W_{n-j}(K)^{i} \geq \kappa_n^{-j} W_{n-i}(K)^{j},
	\end{align}
	where $0 < j < i \leq n$, $K \in \K^n$ with non-empty interior. Equality holds in \eqref{eq:IsoperimetricIneqTwoQuermass} if and only if $K$ is a Euclidean ball. Setting one parameter to be $n$, we obtain the isoperimetric inequality between volume and $W_{n-i}$, $1 \leq i \leq n - 1$. Letting $S(K)=nW_1(K)$ be the surface area of $K$, this reduces to the classical isoperimetric inequality, see also \eqref{WiVninequ},
	\begin{equation} \label{eq:isoperIneq}
		W_{n-i}(K)^n \geq \kappa_n^{n-i}V_n(K)^i \qquad \text{ and} \qquad \frac{S(K)^n}{(n\kappa_n)^n} \geq \frac{V_n(K)^{n-1}}{\kappa_n^{n-1}}.
	\end{equation}
	Equality holds if and only if $K$ is a Euclidean ball.
	
	In order to prove the essential \cref{lem:isoperRatioTendsInfty} in the proof of \cref{mthm:existExtr}, we need the following result from \cite{Gritzmann1987} (see also \cite{Gritzmann1988} or \cite{Esposito2005}*{Lem~4.1}), stating that
	\begin{align}\label{ineqDiamGritzmann}
		\mathrm{diam}\,  K \leq c(n) \frac{S(K)^{n-1}}{V_n(K)^{n-2}},
	\end{align}
	for $K \in \K^n$ with non-empty interior and some constant $c(n) > 0$ depending only on the dimension.
	
	\begin{lem}
		\label{lem:isoperRatioTendsInfty}
		Suppose that $(K_j)_{j \in \N}$ is a sequence of convex bodies with non-empty interior. If $(K_j)_{j \in \N}$ converges to $K_0 \in \K^n$ of dimension $1 \leq \dim K_0 \leq n-1$, then the $i$-th isoperimetric ratio, $0 \leq i \leq n-1$, tends to infinity,
		\begin{align*}
			\frac{W_{n-i}(K_j)^n}{V_n(K_j)^{i}} \to \infty, \qquad j \to \infty.
		\end{align*}
	\end{lem}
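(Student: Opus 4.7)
The plan is to first settle the co-degree one case $i = n-1$ by invoking Gritzmann's inequality~\eqref{ineqDiamGritzmann}, and then to bootstrap to all remaining degrees $1 \leq i \leq n-1$ by comparing to the $i = n-1$ case via the chain of isoperimetric inequalities~\eqref{eq:IsoperimetricIneqTwoQuermass}. (The case $i = 0$ reduces to the constant $W_n(K_j)^n / V_n(K_j)^0 = \kappa_n^n$, so I shall take the range to be $1 \leq i \leq n-1$.)

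For $i = n-1$, the assumption $\dim K_0 \geq 1$ ensures $\mathrm{diam}\,K_0 > 0$. Since the diameter is continuous with respect to Hausdorff convergence, there exist $j_0 \in \N$ and a constant $d > 0$ such that $\mathrm{diam}\,K_j \geq d$ for all $j \geq j_0$. Substituting into~\eqref{ineqDiamGritzmann} gives
\begin{equation*}
    S(K_j)^{n-1} \geq \frac{d}{c(n)}\, V_n(K_j)^{n-2}.
\end{equation*}
Raising both sides to the power $n/(n-1)$ and dividing by $V_n(K_j)^{n-1}$, the exponent $n(n-2)/(n-1) - (n-1) = -1/(n-1)$ appears, and a short calculation yields the key estimate
\begin{equation*}
    \frac{S(K_j)^n}{V_n(K_j)^{n-1}} \geq \left(\frac{d}{c(n)}\right)^{\!n/(n-1)} V_n(K_j)^{-1/(n-1)}.
\end{equation*}
Since $V_n(K_j) \to V_n(K_0) = 0$, the right hand side diverges. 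Using $W_1 = S/n$, this settles the case $i = n-1$.

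For $1 \leq i < n-1$, apply inequality~\eqref{eq:IsoperimetricIneqTwoQuermass} with the indices $j = i$ and $i_{\text{ineq}} = n-1$ to obtain $(W_{n-i}(K_j)/\kappa_n)^{1/i} \geq (W_1(K_j)/\kappa_n)^{1/(n-1)}$. Raising to the $n$-th power, rearranging, and dividing by $V_n(K_j)^i$ yields
\begin{equation*}
    \frac{W_{n-i}(K_j)^n}{V_n(K_j)^i} \geq \kappa_n^{\,n - in/(n-1)} \left(\frac{W_1(K_j)^n}{V_n(K_j)^{n-1}}\right)^{\!i/(n-1)},
\end{equation*}
which, by the first step and the positivity of the exponent $i/(n-1)$, tends to infinity. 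The main subtlety is the truly degenerate regime $\dim K_0 < i$, where both numerator and denominator collapse and no direct continuity argument is available; the above reduction circumvents this by always comparing to the surface area quotient, whose blow-up is guaranteed by Gritzmann's bound as soon as the limit body is merely one-dimensional.
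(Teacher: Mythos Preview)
Your proof is correct and follows essentially the same approach as the paper: first settle $i=n-1$ via Gritzmann's diameter bound~\eqref{ineqDiamGritzmann}, then reduce the general case to this one through the quermassintegral inequalities~\eqref{eq:IsoperimetricIneqTwoQuermass}. The only cosmetic difference is that in the first step the paper factors $S^n/V_n^{n-1} = (S/V_n)\cdot(S^{n-1}/V_n^{n-2})$ and applies the isoperimetric inequality to the first factor before invoking Gritzmann on the second, whereas you simply raise Gritzmann's bound to the power $n/(n-1)$; both routes arrive at a lower bound of the form $c\,V_n(K_j)^{-1/(n-1)}$ (up to constants), and your observation about the degenerate case $i=0$ is apt.
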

	\begin{proof}
		First, let $i=n-1$. By \eqref{ineqDiamGritzmann} and the isoperimetric inequality~\eqref{eq:isoperIneq},
		\begin{align}\label{eq:prfIsoperRatInf}
			\frac{S(K_j)^n}{V_n(K_j)^{n-1}} = \frac{S(K_j)}{V_n(K_j)}\frac{S(K_j)^{n-1}}{V_n(K_j)^{n-2}} \geq \frac{S(K_j)}{V_n(K_j)} \frac{\mathrm{diam}\,  K_j}{c(n)} \geq \frac{n\kappa_n^{\frac{1}{n}}}{c(n)} \frac{\mathrm{diam}\,  K_j}{V_n(K_j)^{\frac{1}{n}}}.
		\end{align}
		By the assumptions on $\dim K_0$, $\mathrm{diam}\,  K_j \to \mathrm{diam}\,  K_0 \neq 0$ and $V_n(K_j) \to V_n(K_0) = 0$. Hence, the right-hand side of \eqref{eq:prfIsoperRatInf} tends to infinity, concluding the proof for $i=n-1$ as $nW_1(K) = S(K)$ for all $K \in \K^n$.
		
		For general $0 \leq i < n-1$, by \eqref{eq:IsoperimetricIneqTwoQuermass},
		\begin{align*}
			\kappa_n^{-n+1} W_{n-i}(K)^{n-1} \geq \kappa_n^{-i} W_{1}(K)^i,
		\end{align*}
		and, thus,
		\begin{align*}
			\frac{W_{n-i}(K_j)^n}{V_n(K_j)^{i}} \geq \kappa_n^{\frac{(n-i-1)n}{n-1}} \left(\frac{W_1(K_j)^n}{V_n(K_j)^{n-1}}\right)^{\frac{i}{n-1}}.
		\end{align*}
		Consequently, the claim for general $i$ follows from the case $i=n-1$.
	\end{proof}

	To show the existence of Minkowski valuations for which the volume product admits maximizers different from balls, we need the following example from \cite{Hofstaetter2021}.
	\begin{thm}[\cite{Hofstaetter2021}*{Thm.~4.1}]\label{thm:Junbounded}
		For every $n \geq 2$, the volume product
		\begin{align*}
			V_n(\mathrm{J}^\circ K)V_n(K), \quad K \in \K^n,
		\end{align*}
		is unbounded for $\mathrm{J}: \K^n \to \K^n, \mathrm{J}K = K - s(K)$, where $s(K)$ denotes the Steiner point of $K$.
	\end{thm}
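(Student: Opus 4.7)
The plan is to exhibit an explicit family $(K_R)_{R\geq 1}\subset\K^n$ for which $V_n(\mathrm{J}^\circ K_R)\,V_n(K_R)\to\infty$ as $R\to\infty$. A natural candidate is the very flat cone
\[K_R \;=\; \mathrm{conv}\bigl((R B^{n-1})\times\{0\},\, e_n\bigr) \;\subset\; \R^n,\]
the intuition being that as $R\to\infty$ the Steiner point of $K_R$ is pulled towards the flat base $RB^{n-1}\times\{0\}$, so $\mathrm{J}K_R = K_R - s(K_R)$ carries the origin arbitrarily close to its own boundary; in turn its polar elongates without bound. Two ingredients are immediate: $V_n(K_R) = \kappa_{n-1} R^{n-1}/n$ by the cone volume formula, and by the $\mathrm{O}(n-1)$ symmetry in the first $n-1$ coordinates $s(K_R) = s_R e_n$ for some $s_R\in(0,1)$.

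The heart of the argument would be to show $s_R\to 0$. Starting from $\kappa_n s_R = \int_{\S^{n-1}} u_n\, h(K_R, u)\,du$ with $h(K_R, u) = \max(u_n,\, R\|u'\|)$, I would split the sphere at the latitude $u_n = R/\sqrt{1+R^2}$ where the max switches branches. The key observation is the parity cancellation
\[\int_{\S^{n-1}} u_n\,(R\|u'\|)\,du \;=\; 0\]
(by the reflection $u_n \mapsto -u_n$), so subtracting this null integral rewrites $\kappa_n s_R$ as the single-cap integral
\[\kappa_n s_R \;=\; \int_{\{u_n\,\geq\, R/\sqrt{1+R^2}\}} u_n\bigl(u_n - R\|u'\|\bigr)\,du,\]
supported on a spherical cap of polar radius $\arctan(1/R)$. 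A routine spherical-coordinates bound on this shrinking cap then yields $s_R \leq c_n/R^{n-1}$, and in particular $s_R \to 0$.

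Next I would bound the polar volume from below by an explicit polytope. The bipyramid
\[P_R \;:=\; \mathrm{conv}\bigl(\{\pm R^{-1} e_j\}_{j=1}^{n-1} \cup \{(1-s_R)^{-1} e_n,\, -s_R^{-1} e_n\}\bigr)\]
lies inside $(K_R - s_R e_n)^\circ$: this reduces to the vertex-wise check $h(K_R - s_R e_n, v)\leq 1$ for each vertex $v$ of $P_R$, a one-line calculation using $h(K_R,\cdot)$ together with $|w_j|\leq R$ for $w\in K_R$, $j<n$. A direct computation (a cross-polytope base of volume $2^{n-1}/((n-1)!\,R^{n-1})$ in $e_n^\perp$, with apex heights $1/(1-s_R)$ and $1/s_R$) gives
\[V_n(P_R) \;=\; \frac{2^{n-1}}{n!\, R^{n-1}\, s_R(1-s_R)} \;\geq\; \frac{2^{n-1}}{n!\, R^{n-1}\, s_R}.\]
Combining the three pieces concludes the proof:
\[V_n(\mathrm{J}^\circ K_R)\,V_n(K_R) \;\geq\; V_n(P_R)\, V_n(K_R) \;\geq\; \frac{2^{n-1}\,\kappa_{n-1}}{n\cdot n!\, s_R} \;\longrightarrow\; \infty.\]

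The main obstacle I foresee is the Steiner-point asymptotics $s_R\to 0$. A priori the integral defining $s_R$ looks of order $R$ from the dominant "base'' contribution, and it is only the parity cancellation of that contribution that reduces it to a shrinking-cap integral of order $R^{-(n-1)}$. Once this is in hand, the remaining steps are routine cone/bipyramid volume computations.
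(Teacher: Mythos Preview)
The paper does not contain a proof of this statement; it is quoted verbatim from \cite{Hofstaetter2021}*{Thm.~4.1} and only \emph{used} later in the proof of \cref{mthm:existExtr}. There is therefore no in-paper argument to compare against.

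That said, your proposal is a complete and correct proof. The computation of $h(K_R,u)=\max(u_n,R\|u'\|)$ is right, the parity cancellation does collapse $\kappa_n s_R$ to a cap integral of order $R^{-(n-1)}$ (your identification of the cap $\{u_n\geq R/\sqrt{1+R^2}\}$ is accurate), and the vertex check $h(K_R-s_R e_n,v)\leq 1$ for each vertex $v$ of $P_R$ goes through exactly as you describe. The bipyramid volume formula $V_n(P_R)=\dfrac{2^{n-1}}{n!\,R^{n-1}\,s_R(1-s_R)}$ is also correct, using $(1-s_R)^{-1}+s_R^{-1}=\bigl(s_R(1-s_R)\bigr)^{-1}$. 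The final product thus behaves like $c_n/s_R\to\infty$, and the argument works uniformly for all $n\geq 2$. One cosmetic point: you might note explicitly that checking the vertices of $P_R$ suffices because $(\mathrm{J}K_R)^\circ$ is convex, but this is implicit in what you wrote.
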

	
	We are now ready to prove \cref{mthm:existExtr}.
	\begin{proof}[Proof of \cref{mthm:existExtr}]
		Let $f \in L^1(\mathrm{S}^{n-1})$ denote the generating function of $\Phi_i$ and assume that $f = f_1 + f_2$, where $f_1$ and $f_2$ are generating functions of Minkowski valuations $\Phi_i^1$ and $\Phi_i^2$, respectively, and $f_1 > c > 0$ for some $c \in \mathbb{R}$.
		
		By the additivity of convolution and the fact that the Steiner point of $\Phi_i^2$ is at the origin, see, e.g., \cites{Alesker2011, Parapatits2012, Berg2014}, and is contained in $\mathrm{relint}\, \Phi_i^2K$,
		\begin{align*}
			\Phi_i K = \Phi_i^1 K + \Phi_i^2 K \supseteq \Phi_i^1 K.
		\end{align*}
		Moreover, for $u \in \S^{n-1}$ and $K \in \K^n$, a direct estimate shows that
		\begin{align*}
			h(\Phi_i^1 K, u) = \int_{\S^{n-1}} f_1(\vartheta_u^{-1}v) dS_i(K, v) \geq c n W_{n-i}(K),
		\end{align*}
		that is, $\Phi_i^1 K \supseteq c n W_{n-i}(K) B^n$. Hence, by the isoperimetric inequality~\eqref{eq:isoperIneq},
		\begin{align}\label{eq:prfExistEst}
			V_n(\Phi_i^{\circ}K)V_n(K)^i \leq V_n(\Phi_i^{1,\circ}K)V_n(K)^i \stackrel{(\ast)}{\leq} d_n \frac{V_n(K)^i}{W_{n-i}(K)^n} \leq d_n \kappa_n^{i-n} < \infty,
		\end{align}
		for every $K \in \K^n$ with non-empty interior, and where $d_n >0$ depends on $n$ only.
		
		Consequently, $V_n(\Phi_i^{\circ}K)V_n(K)^i$ is bounded from above and, therefore, we can choose a sequence $K_j \in \K^n$ of convex bodies with non-empty interior such that
		\begin{equation} \label{proofexist2}
			V_n(\Phi_i^{\circ}K_j)V_n(K_j)^i \to \sup_{\substack{K \in \mathcal{K}^n\\ \mathrm{int}\,K \neq \emptyset}} V_n(\Phi_i^{\circ}K)V_n(K)^i < \infty \quad \mbox{as $j \to \infty$.}
		\end{equation}
		Note that this supremum is positive, as can easily be confirmed by plugging in Euclidean balls. Since $V_n(\Phi_i^{\circ}K_j)V_n(K_j)^i$ is invariant under translations and scaling, we may assume that every $K_j$ is contained in $B^n$ and contains a segment of length one. Using Blaschke's selection theorem, we obtain a subsequence, which we again denote by $K_j$, converging to a convex body $K_0 \in \K^n$ with $\mathrm{dim}\,K_0 \geq 1$.
		
		Assume that $\mathrm{dim}\,K_0 < n$. Then, by $(\ast)$ in \eqref{eq:prfExistEst} and \cref{lem:isoperRatioTendsInfty}, we have
		\[V_n(\Phi_i^{\circ}K_j)V_n(K_j)^i \leq d_n \frac{V_n(K_j)^i}{W_{n-i}(K_j)^n} \to 0  \quad \mbox{as $j \to \infty$}, \]
		which is a contradiction to the convergence of $V_n(\Phi_i^{\circ}K_j)V_n(K_j)^i$ to the positive supremum~\eqref{proofexist2}. Hence, $K_0$ has non-empty interior and, by continuity, is a maximizer of $V_n(\Phi_i^{\circ}K)V_n(K)^i$.
		
		It remains to construct a Minkowski valuation $\Psi_1: \mathcal{K}^n \rightarrow \mathcal{K}^n$ of degree 1 such that the maximizers of $V_n(\Psi_1^{\circ}K)V_n(K)$ are different from Euclidean balls.
		To this end, let $\Phi_1: \mathcal{K}^n \rightarrow \mathcal{K}^n$ be a Minkowski valuation whose generating function $f$ is bounded away from zero; take, e.g., $f$ to be the support function of a convex body of revolution with non-empty interior or as in the proof of \cref{thm:monotExMax}. For $0 < \lambda < 1$, we define
		\begin{align*}
			\Psi_1^{\lambda}: \mathcal{K}^n \rightarrow \mathcal{K}^n, \quad \Psi_1^{\lambda} = \lambda\,\Phi_1 + (1-\lambda)\mathrm{J}.  		
		\end{align*}
		Then, $\Psi_1^{\lambda}$ satisfies the assumptions of the first part of the theorem and, therefore, $V_n(\Psi_1^{\lambda,\circ}K)V_n(K)$ admits maximizers for every $0 < \lambda < 1$. Moreover, since by \cref{lem:propNormMinkVal},
		\begin{align*}
			\Psi_1^{\lambda} B^n = (\lambda r_{\Phi_1} + 1 - \lambda) B^n \supseteq \min\{r_{\Phi_1},1\}B^n,  
		\end{align*}
		we see that $V_n(\Psi_1^{\lambda,\circ}B^n)V_n(B^n)$ is bounded by $\kappa_n^2\min\{r_{\Phi_1},1\}^{-n}$ for all $\lambda \in (0,1)$.
		Using that $\Psi_1^{\lambda} K \to \mathrm{J}K$ for every $K \in \K^n$ as $\lambda \to 0$, it follows from continuity and \cref{thm:Junbounded} that there exists a convex body $K \in \K^n$ with non-empty interior such that
		\begin{align*}
			V_n(\Psi_1^{\lambda,\circ}K)V_n(K) > \frac{\kappa_n^2}{\min\{r_{\Phi_1},1\}^n} \geq V_n(\Psi_1^{\lambda,\circ}B^n)V_n(B^n),
		\end{align*}
		whenever $\lambda > 0$ is small enough. Consequently, Euclidean balls cannot maximize $V_n(\Psi_1^{\lambda,\circ}K)V_n(K)$.
	\end{proof}

	\begin{rem}
		Let us point out that the proof of the second part of \cref{mthm:existExtr} does not use the specific form of $\mathrm{J}$, but rather that its volume product is unbounded. Indeed, the same arguments can be applied to any example of an $i$-homogeneous Minkowski valuation for which the quantity from \cref{mthm:existExtr} is unbounded. So far, $\mathrm{J}$, $-\mathrm{J}$ and their multiples are the only known examples. It is an interesting problem whether such examples exist for $2 \leq i \leq n-1$, and whether such examples exist for $i=1$ which are even.
		
		Moreover, the arguments of the proof show that the set of Minkowski valuations with unbounded volume product is neither open nor convex (take, e.g., $\frac{1}{2}J + \frac{1}{2}(-J)$, which admits maximizers).
	\end{rem}

	We turn now to Minkowski valuations of degree $1$. In this setting, it was proven in \cite{Hofstaetter2021} that the volume product $V_n(\Phi^\circ K)V_n(K)$ is maximized by Euclidean balls for all continuous, translation invariant and $\SO(n)$ equivariant Minkowski valuations which are montone with respect to set-inclusion. However, monotonicity is not a requirement for the existence of extremals, as the following theorem shows.
	\begin{thm} \label{thm:monotExMax}
		There exists an even continuous Minkowski valuation $\Phi_1: \K^n \rightarrow \K^n$ of degree $1$ which is translation invariant, $\SO(n)$ equivariant, and \emph{not monotone} such that
		\begin{align*}
			V_n(\Phi_1^\circ K)V_n(K)
		\end{align*}
		attains a maximum on convex bodies $K \in \K^n$ with non-empty interior.
	\end{thm}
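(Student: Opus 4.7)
The goal is to produce an example satisfying all the required properties, so the whole argument is an explicit construction. My plan is to build $\Phi_1$ as a sum $\Phi_1 = \Phi_1^{1} + \Phi_1^{2}$ of two even, continuous, translation invariant, $\SO(n)$ equivariant Minkowski valuations of degree $1$, in such a way that the hypothesis of \cref{mthm:existExtr} is directly fulfilled by the generating functions. Specifically, take $\Phi_1^{1}$ to have \emph{constant} generating function $f_1\equiv c>0$, which according to \cref{lem:propNormMinkVal} corresponds to the monotone ``mean-width ball'' valuation $\Phi_1^{1}K=cnW_{n-1}(K)B^{n}$. Take $\Phi_1^{2}$ to have generating function $f_2=h_L$ for an appropriately chosen origin-symmetric convex body of revolution $L\in\K^n$; as noted in the comments following \cref{mthm:existExtr}, every (strictly) positive support function of a body of revolution gives rise to an even Minkowski valuation of degree $1$, so $\Phi_1^{2}$ is well-defined, even, and of the right symmetry type.

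The key step is to choose $L$ so that $\Phi_1^{2}$ is \emph{not} monotone. The plan is to take $L$ to be a non-zonoid body of revolution, e.g. a member of the one-parameter family $L_\alpha$ from \cref{ex:NonZonoidPosCrofton} (or a suitable higher-order analogue built from Legendre polynomials) with the parameter pushed out of the zonoid range. Since Minkowski valuations generated by zonoids of revolution are monotone (they decompose as Minkowski integrals of projection-type operators), the idea is that the ``defect from being a zonoid'' forces monotonicity to fail: using the spherical harmonic expansion of $h_L$ and \eqref{eq:ConvSphMeasSph}, one evaluates $h(\Phi_1^2 K,u_0)$ on two concrete nested bodies $K\subseteq K'$ (for instance, a thin body of revolution and a slight transverse thickening of it, chosen so that the difference $S_1(K',\cdot)-S_1(K,\cdot)$ has significant mass in a band of latitudes where $\bar h_L$ dips low) and exhibits $u_0\in\S^{n-1}$ with $h(\Phi_1^{2}K,u_0)>h(\Phi_1^{2}K',u_0)$. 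This is the most technical step and is the main obstacle of the proof.

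Having $\Phi_1^{2}$ non-monotone with explicit witnesses $K\subseteq K'$ and $u_0$, with non-monotonicity defect $\delta:=h(\Phi_1^{2}K,u_0)-h(\Phi_1^{2}K',u_0)>0$, we now fix the constant $c>0$ small enough that $cn\bigl(W_{n-1}(K')-W_{n-1}(K)\bigr)<\delta$. Then
\[
 h(\Phi_1 K,u_0)-h(\Phi_1 K',u_0)=-cn\bigl(W_{n-1}(K')-W_{n-1}(K)\bigr)+\delta>0,
\]
so $\Phi_1=\Phi_1^{1}+\Phi_1^{2}$ is not monotone. It is manifestly even, continuous, translation invariant, $\SO(n)$ equivariant, and of degree $1$, with generating function $f=c+h_L$ which (being $\ge c>0$) is bounded from below by a positive constant.

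Finally, the decomposition $f=f_1+f_2$ with $f_1\equiv c$ bounded below by the positive constant $c$ and $f_2=h_L$ itself a generating function is precisely the hypothesis of \cref{mthm:existExtr}. Applying the first part of that theorem to $\Phi_1$ yields that $V_n(\Phi_1^{\circ}K)V_n(K)$ attains a maximum on convex bodies $K\in\K^n$ with non-empty interior, completing the proof. In summary, the only genuinely non-routine step is the explicit verification of non-monotonicity of a Minkowski valuation generated by a positive support function of a non-zonoid body of revolution; once secured, \cref{mthm:existExtr} closes the argument immediately.
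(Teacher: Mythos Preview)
Your approach contains a genuine gap at the ``key step'': the claim that choosing $f_2=h_L$ with $L$ a non-zonoid body of revolution produces a \emph{non-monotone} degree-$1$ Minkowski valuation is false. By \eqref{eq:boxConv} and \eqref{eq:boxSuppFct}, for any convex body $L$ of revolution one has
\[
S_1(K,\cdot)\ast h_L \;=\; h(K,\cdot)\ast \Box_n h_L \;=\; h(K,\cdot)\ast S_1(L,\cdot),
\]
and $S_1(L,\cdot)$ is always a \emph{non-negative} measure. Hence, by \cref{thm:kiderlenMonMinkEnd}, the valuation $\Phi_1^2$ with generating function $h_L$ is automatically monotone, regardless of whether $L$ is a zonoid. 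You are conflating two distinct positivity notions: the zonoid property of $L$ governs whether the \emph{spherical Crofton distribution} is non-negative (this is the content of \cref{coro:translationGenMeasGenFct} and \cref{ex:NonZonoidPosCrofton}), whereas monotonicity is governed by the sign of the measure $\mu=\Box_n f$ in the Kiderlen representation. These are unrelated; indeed the paper remarks explicitly that a positive generating function need not yield a non-negative Crofton distribution, but this says nothing about monotonicity failing.

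Consequently your construction $\Phi_1=\Phi_1^1+\Phi_1^2$ is a sum of two monotone valuations and hence monotone, so the explicit witnesses $K\subseteq K'$, $u_0$ you seek cannot exist. The paper's proof proceeds quite differently: it invokes Dorrek's construction to obtain an even valuation of the form $h(K,\cdot)\ast(1-h)$ with $1-h$ genuinely sign-changing (so non-monotonicity is immediate from \cref{thm:kiderlenMonMinkEnd}), and then the real work is to verify that the corresponding generating function $f=\Box_n^{-1}(1-h)$ is strictly positive, using fine properties of Berg's Green function $g_n$. The point is precisely that $f$ is \emph{not} a support function; any attempt that starts from $f=h_L$ is doomed in degree~$1$.
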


	For the proof of \cref{thm:monotExMax}, we will need the following characterization of $1$-homogeneous Minkowski valuations from \cite{Kiderlen2006}. Here, a map $\Phi: \K^n \to \K^n$ is called weakly monotone, if it is monotone on the subset of convex bodies with Steiner point at the origin.
	\begin{thm}[\cite{Kiderlen2006}]\label{thm:kiderlenMonMinkEnd}
		A continuous, translation invariant, and $\SO(n)$ equivariant Minkowski valuation $\Phi_1:\K^n\to\K^n$ of degree $1$ is weakly monotone, if and only if 
		\begin{align*}
			h(\Phi_1 K, \cdot) = h(K, \cdot) \ast \mu, \quad K \in \K^n,
		\end{align*}
		for a centered, $\SO(n-1)$-invariant measure $\mu$ on $\S^{n-1}$, which is non-negative up to addition of a linear measure $\alpha \pair{u}{\bar e}du$, $\alpha \in \R$. It is monotone, if and only if $\mu$ is non-negative. Moreover, the measure $\mu$ is uniquely determined by $\Phi_1$.
	\end{thm}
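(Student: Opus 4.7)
My plan is to produce $\Phi_1$ in the form $\Phi_1 = \Phi_1^{(1)} + \lambda\, \Phi_1^{(2)}$, where $\Phi_1^{(1)}$ is a monotone even Minkowski valuation whose generating function is bounded below by a positive constant and $\Phi_1^{(2)}$ is an even, non-monotone degree-$1$ Minkowski valuation; the scalar $\lambda > 0$ will be chosen so large that the non-monotonicity of $\Phi_1^{(2)}$ is inherited by $\Phi_1$. The positive generating function of $\Phi_1^{(1)}$ then allows \cref{mthm:existExtr} to be applied to $\Phi_1$, producing a maximizer. For $\Phi_1^{(1)}$ I take the Minkowski valuation with constant generating function $f_1 \equiv 1$: by \eqref{eq:ConvSphMeasSph}, $h(\Phi_1^{(1)} K, u) = S_1(K, \S^{n-1})$ is independent of $u$, so $\Phi_1^{(1)} K$ is a centered Euclidean ball of radius a positive (monotone, linear) functional of $K$; hence $\Phi_1^{(1)}$ is continuous, translation invariant, $\SO(n)$-equivariant, even, and monotone, with $f_1$ bounded below by a positive constant.

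The crux is the construction of an even, non-monotone $\Phi_1^{(2)}$. The key structural observation is drawn from \cref{thm:kiderlenMonMinkEnd}: any weakly monotone degree-$1$ MV has the form $h(\Phi_1 K, \cdot) = h(K,\cdot)\ast \mu$ for a centered, $\SO(n-1)$-invariant measure $\mu$ satisfying $\mu + \alpha\,\langle\cdot,\bar e\rangle\,du \geq 0$ for some $\alpha \in \R$, and is monotone precisely when $\mu \geq 0$. If $\Phi_1$ is additionally even then so is $\mu$, while the linear correction $\alpha\,\langle\cdot,\bar e\rangle\,du$ is odd. Evaluating the non-negativity condition at $u$ and $-u$ and averaging forces $\mu \geq 0$ on its own, so every even weakly-monotone MV of degree $1$ is already monotone. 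Consequently, any even non-monotone $\Phi_1^{(2)}$ must fail to be weakly monotone — it does not admit a Kiderlen representation. Such valuations do exist inside the infinite-dimensional cone of even degree-$1$ MVs parametrized by their spherical Crofton distributions from \cref{mthm:repEvenMinkVal}: the monotone ones correspond to non-negative Crofton measures, whereas the full cone also contains MVs generated by signed Crofton distributions. A concrete $\Phi_1^{(2)}$ can be produced by starting from a non-negative Crofton measure $\delta_0$ with a large positive density and subtracting a suitably chosen small non-negative measure, so that the result $\delta_2$ becomes signed but $V_1(K\,|\,\cdot)\ast \delta_2$ is still a support function for every $K$, verified uniformly by a perturbation argument around $\delta_0$. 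Verifying that the perturbed Crofton distribution still yields a valid Minkowski valuation is the main technical obstacle.

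With $\Phi_1^{(1)}$ and $\Phi_1^{(2)}$ in hand, set $\Phi_1 := \Phi_1^{(1)} + \lambda\, \Phi_1^{(2)}$. It inherits evenness, continuity, translation invariance, $\SO(n)$-equivariance, the degree-$1$ homogeneity, and the valuation property from its summands. Its generating function $1 + \lambda f_2$, where $f_2$ generates $\Phi_1^{(2)}$, is the sum of two generating functions, the first bounded below by $1 > 0$; hence \cref{mthm:existExtr} yields the existence of a maximizer of $V_n(\Phi_1^\circ K)\,V_n(K)$. To verify non-monotonicity for $\lambda$ large, use the non-monotonicity of $\Phi_1^{(2)}$ to pick $K_0 \subsetneq L_0 \in \K^n$ and $u \in \S^{n-1}$ with
\[
\Delta \;:=\; h(\Phi_1^{(2)} K_0, u) - h(\Phi_1^{(2)} L_0, u) \;>\; 0.
\]
Writing $\Phi_1^{(1)} K = \rho(K)\,B^n$ with $\rho(K) \geq 0$ monotone in $K$, one computes
\[
h(\Phi_1 K_0, u) - h(\Phi_1 L_0, u) \;=\; \bigl(\rho(K_0) - \rho(L_0)\bigr) + \lambda\,\Delta,
\]
which is strictly positive once $\lambda > (\rho(L_0) - \rho(K_0))/\Delta$. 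Thus $\Phi_1 K_0 \not\subseteq \Phi_1 L_0$, and $\Phi_1$ has all the required properties.
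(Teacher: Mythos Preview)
Your proposal does not address the stated theorem. \Cref{thm:kiderlenMonMinkEnd} is Kiderlen's characterization of (weakly) monotone degree-$1$ Minkowski valuations; it is a cited result from \cite{Kiderlen2006} and the paper does not prove it. What you have written is instead an attempt to construct a specific even, non-monotone $\Phi_1$ whose polar volume product has a maximizer --- that is the content of \cref{thm:monotExMax}, not of \cref{thm:kiderlenMonMinkEnd}. Worse, you explicitly \emph{invoke} \cref{thm:kiderlenMonMinkEnd} in your argument (``The key structural observation is drawn from \cref{thm:kiderlenMonMinkEnd}''), so as a proof of the stated theorem your argument would be circular.

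Even read as an attempt at \cref{thm:monotExMax}, there is a genuine gap. Your observation that an even, weakly monotone degree-$1$ valuation is automatically monotone is correct and useful: it shows that any even non-monotone $\Phi_1^{(2)}$ must fall outside Kiderlen's representation. But you never actually construct such a $\Phi_1^{(2)}$. Saying one can ``start from a non-negative Crofton measure \ldots and subtract a suitably chosen small non-negative measure'' so that the result is signed yet still yields a Minkowski valuation, ``verified uniformly by a perturbation argument'', is precisely the hard step --- and you also do not verify that a signed Crofton distribution forces non-monotonicity. The paper's proof of \cref{thm:monotExMax} handles exactly this point by invoking Dorrek's explicit construction of an even, non-monotone degree-$1$ Minkowski valuation, and then shows via the Berg function that its generating function is strictly positive, so that \cref{mthm:existExtr} applies directly without any additional summand or scaling parameter $\lambda$.
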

	As \cref{mthm:existExtr} is formulated in terms of generating functions, we need to translate between this representation and the representation from \cref{thm:kiderlenMonMinkEnd}. To this end, let $\Box_n$ be the differential operator given by
	\begin{align*}
		\Box_n h = h + \frac{1}{n-1} \Delta_{\S} h, \quad h \in C^\infty(\S^{n-1}),
	\end{align*}
	where $\Delta_{\S}$ denotes the spherical laplacian. It satisfies
	\begin{align}\label{eq:boxSuppFct}
		\Box_n h(K, \cdot) = S_1(K, \cdot), \quad K \in \K^n,
	\end{align}
	where the inequality has to be understood in a distributional sense. In particular, using its self-adjointness, we obtain for all $K \in \K^n$ and $f \in C^\infty(\S^{n-1})$ that
	\begin{align}\label{eq:boxConv}
		S_1(K, \cdot) \ast f = h(K, \cdot) \ast \Box_n f.
	\end{align}
	Consequently, we will need to apply the inverse transform of $\Box_n$, which was described in \cite{Berg1969} by
	\begin{align}\label{eq:invBox}
		\Box_n^{-1}h (u) = \int_{\S^{n-1}} g_n(\pair{u}{v}) h(v) d v, \quad u \in \S^{n-1}, h \in C^{\infty}(\S^{n-1}),
	\end{align}
	where $g_n \in C^\infty(-1,1)$ denotes the $n$th Berg function, defined in \cite{Berg1969}.
	
	\medskip
	
	We are now ready to prove \cref{thm:monotExMax}, based on a construction from \cite{Dorrek2017b}.
	\begin{proof}[Proof of \cref{thm:monotExMax}]
		Since, by \eqref{eq:boxConv},
		\begin{align*}
			S_1(K, \cdot) \ast f = h(K, \cdot) \ast \Box_n f,
		\end{align*}
		by \cref{mthm:existExtr} and \cref{thm:kiderlenMonMinkEnd}, it is sufficient to find a strictly positive generating function $f \in C^\infty(\S^{n-1})$ of a $1$-homogeneous Minkowski valuation $\Phi_1$ for which $\Box_n f$ takes negative values. In order to find such a generating function, we will make use of a construction by Dorrek~\cite{Dorrek2017b} of a non-monotone even Minkowski valuation of degree $1$.
		
		Indeed, denote by $C_\alpha = \{u \in \S^{n-1}: \, \pair{u}{\bar e} \geq 1-\alpha\}$ the spherical cap around the pole $\bar e$ of radius $\alpha > 0$. Then Dorrek showed the following (combining his proofs of \cite{Dorrek2017b}*{Lem.~3.4 and Thm.~3.5}): For every even, non-negative and $\SO(n-1)$ invariant $h \in C(\S^{n-1})$ which is supported in $C_\alpha \cup (-C_\alpha)$ for sufficiently small $\alpha > 0$ and attains its maximum at $\bar e$, the equation $h(\Phi_1 K, \cdot) = h(K, \cdot) \ast (1-h)$ defines a Minkowski valuation.
		
		Choosing $h(\bar e) = C > 1$, these Minkowski valuations will not be monotone (and also not weakly monotone since they are even), by \cref{thm:kiderlenMonMinkEnd}, since $(1-h)$ takes negative values. It therefore remains to show that we can choose $h \in C^\infty(\S^{n-1})$ within the above restrictions such that $f = \Box_n^{-1} (1-h)$ is strictly positive.
		
		To this end, note that since $\Box_n 1 = 1$, by \eqref{eq:invBox}, this is equivalent to finding $h$ such that
		\begin{align}\label{eq:prfExMaxNonMon}
			\Box_n^{-1}h(u) = \int_{\S^{n-1}} g_n(\pair{u}{v})h(v) dv < 1, \quad \forall u \in \S^{n-1}.
		\end{align}
		We will prove that \eqref{eq:prfExMaxNonMon} holds for all $h$ as above, such that the radius $\alpha$ of the cap $C_\alpha$ containing the support is small enough. Therefore, let $\alpha_j > 0$ be a monotone sequence converging to $0$ and let $h_j \in C^\infty(\S^{n-1})$, $j \in \N$, be even with $\mathrm{supp}\, h_j \subseteq C_\alpha \cap (-C_\alpha)$ and $\max_{v \in \S^{n-1}} h(v) = h(\bar{e}) = C > 1$ for some fixed $C$.
		
		Next, fix $0<t<1$ arbitrary and let $A_t = \{u \in \S^{n-1}:\, |\pair{u}{e_n}| \leq t\}$. Then there exists $j_0 \in \N$ such that for all $j \geq j_0$, the closed spherical cap of radius $\alpha_j$ around any $u \in A_t$ does not contain $\bar e$. Fix $j \geq j_0$. By \cite{Berg1969}*{Thm.~3.3(iii)}, $|g_n(\pair{\cdot}{\bar e})|$ is continuous on $\S^{n-1} \setminus \{\bar{e}\}$ and, thus, bounded on the compact set $B_{t,j} := \{u \in \S^{n-1}: \pair{u}{\bar e} \leq 0 \text{ or } \mathrm{dist}(u, A_t) \leq \alpha_j\}$ by some constant $M>0$.
		
		 As for any $\vartheta_u \in \SO(n)$ with $\vartheta_u \bar e = u$, $\vartheta_u^{-1} v$ is contained in the spherical caps of radius $\alpha_j$ around $\pm u$ whenever $v \in C_{\alpha_j} \cup (-C_{\alpha_j})$. For $u \in A_t$, these caps are contained in $B_{t,j}$, and by $\pair{u}{v} = \pair{\vartheta_u^{-1}v}{\bar e}$, we deduce that $|g_n(\pair{u}{v})| \leq M$ for all $u \in A_t$ and $v\in C_{\alpha_j} \cup (-C_{\alpha_j})$. Hence, 
		\begin{align*}
			|\Box_n^{-1} h_j(u)| & \leq \int_{\S^{n-1}} |g_n(\pair{u}{v})| h_j(v)\,dv = \int_{C_{\alpha_j} \cup (-C_{\alpha_j})} |g_n(\pair{u}{v})| h_j(v)\,dv \\
			& \leq \int_{C_{\alpha_j} \cup (-C_{\alpha_j})} MC \,dv = 2MC \sigma(C_{\alpha_j}),
		\end{align*}
		where $\sigma$ denotes the spherical Lebesgue measure. Since this bound does not depend on $u \in A_t$ anymore, we can choose $j$ large enough, such that $\Box_n^{-1}h_j \leq \frac{1}{2}$ on $A_t$.
		
		Using that this construction works for every $t<1$, we will now choose $t$ in such a way that we can control the behavior of $\Box_n^{-1}h_j$ on $\S^{n-1}\setminus A_t$. To this end, choose $t_n < 1$ such that $g_n(t) \leq 0$ for all $t > t_n$. Such $t_n$ exists by \cite{Berg1969}*{Thm.~3.3(ii)}. Next, choose $t_n<t<1$ arbitrary. By the convergence $\alpha_j \to 0$ and the previous step, there exists $j_1 \in \N$ such that, for all $j \geq j_1$, $\Box_n^{-1}h_{j} \leq \frac{1}{2}$ on $A_t$ and for every $u \in \S^{n-1}\setminus A_t$ the spherical cap of radius $\alpha_{j}$ around $u$ does not intersect $A_{t_n}$.
		
		Then we can estimate for $j \geq j_1$ and $u \in \S^{n-1} \setminus A_t$ with $\pm \pair{u}{\bar e} > 0$,
		\begin{align*}
			\Box_n^{-1} h_j(u) & = \int_{\S^{n-1}} g_n(\pair{u}{v}) h_j(v)\,dv = \int_{C_{\alpha_j} \cup (-C_{\alpha_j})} g_n(\pair{u}{v}) h_j(v)\,dv \\
			& \leq \int_{\mp C_{\alpha_j}} g_n(\pair{u}{v}) h_j(v) \,dv \leq MC\sigma(C_{\alpha_j}).
		\end{align*}
		By the choice of $j$ from above (for $u \in A_t$, ensuring that $2MC \sigma(C_{\alpha_j}) \leq \frac{1}{2}$), we conclude that $\Box_n^{-1}h_j \leq \frac{1}{4}$ on $\S^{n-1}\setminus A_t$. In total, we have obtained that $1 - \Box_n^{-1} h_j \geq \frac{1}{2}>0$ on $\S^{n-1}$ for all sufficiently large $j \in \N$, concluding the proof.	
	\end{proof}
	
	
	\medskip
	
	In the remainder of the section, we discuss non-polar inequalities. Here, the argument from \cref{mthm:existExtr} (with small adaptations) applies directly and yields the following existence result. As the proof is almost verbatim, we will omit it.
	\begin{thm} \label{thm:existExtrNonPolar} Suppose that $\Phi_i: \K^n \rightarrow \K^n$ is a continuous, translation invariant, and $\SO(n)$ equivariant Minkowski valuation of a given degree
		$i \in \{1, \ldots, n - 1\}$. If the generating function of $\Phi_i$ is a sum of two generating functions one of which is bounded from below by a positive constant, then
		\begin{align}\label{eq:thmExtNonPolar}
			V_n(\Phi_i K)/V_n(K)^i
		\end{align}
		attains a minimum on convex bodies $K \in \K^n$ with non-empty interior.
	\end{thm}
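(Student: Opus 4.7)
The plan is to mirror the proof of \cref{mthm:existExtr}, replacing the upper bound on $V_n(\Phi_i^\circ K) V_n(K)^i$ by a lower bound on $V_n(\Phi_i K)/V_n(K)^i$, and then carrying out the same compactness argument. Write the generating function of $\Phi_i$ as $f = f_1 + f_2$, where each $f_k$ generates a Minkowski valuation $\Phi_i^k$ and $f_1 \geq c > 0$. As in the proof of \cref{mthm:existExtr}, since the Steiner point of $\Phi_i^2 K$ lies at the origin we have $\Phi_i K = \Phi_i^1 K + \Phi_i^2 K \supseteq \Phi_i^1 K$. Using \cref{thm:repGenFct}, the pointwise estimate
\begin{align*}
	h(\Phi_i^1 K, u) = \int_{\S^{n-1}} f_1(\vartheta_u^{-1}v)\, dS_i(K,v) \geq c\, n\, W_{n-i}(K), \qquad u \in \S^{n-1},
\end{align*}
together with the isoperimetric inequality \eqref{eq:isoperIneq}, yields the coercivity bound
\begin{align*}
	\frac{V_n(\Phi_i K)}{V_n(K)^i} \geq (cn)^n \kappa_n\, \frac{W_{n-i}(K)^n}{V_n(K)^i} \geq (cn)^n \kappa_n^{n-i+1} > 0.
\end{align*}

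In particular, the infimum of $V_n(\Phi_i K)/V_n(K)^i$ over convex bodies with non-empty interior is strictly positive, and it is finite since, for instance, $B^n$ yields the value $r_{\Phi_i}^n \kappa_n^{1-i}$ by \cref{lem:propNormMinkVal}. Because the functional is invariant under translations and under positive scalings, I may then choose a minimizing sequence $(K_j)_{j \in \N}$ with each $K_j \subseteq B^n$ and containing a segment of length one. Blaschke's selection theorem produces a subsequence, still denoted $(K_j)$, converging to some $K_0 \in \K^n$ with $\dim K_0 \geq 1$.

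The decisive step is ruling out $\dim K_0 \leq n-1$. Were this the case, then $V_n(K_j) \to V_n(K_0) = 0$ while $\dim K_0 \geq 1$ combined with \cref{lem:isoperRatioTendsInfty} forces $W_{n-i}(K_j)^n/V_n(K_j)^i \to \infty$. The coercivity estimate above would then yield $V_n(\Phi_i K_j)/V_n(K_j)^i \to \infty$, contradicting the fact that $(K_j)$ is a minimizing sequence for a finite infimum. Hence $K_0$ has non-empty interior, and by the continuity of $\Phi_i$ and $V_n$ it realizes the minimum. No genuine obstacle is expected beyond this dimension-reduction argument; the only substantive modification from \cref{mthm:existExtr} is the reversal of the direction of the key estimate, with the hypothesis $f_1 \geq c > 0$ producing a lower rather than an upper bound on the ratio, which makes the coercivity needed for compactness fall out in exactly the same way.
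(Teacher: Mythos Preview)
Your proposal is correct and is precisely the adaptation the paper has in mind: the paper omits the proof of \cref{thm:existExtrNonPolar}, stating only that it is ``almost verbatim'' the proof of \cref{mthm:existExtr} with small adaptations, and your argument---replacing the upper bound $V_n(\Phi_i^\circ K)V_n(K)^i \leq d_n V_n(K)^i/W_{n-i}(K)^n$ by the lower bound $V_n(\Phi_i K)/V_n(K)^i \geq (cn)^n\kappa_n\, W_{n-i}(K)^n/V_n(K)^i$ and then invoking \cref{lem:isoperRatioTendsInfty} along a normalized minimizing sequence---is exactly that adaptation.
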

	
	Additionally, a short proof yields the minimizers of \eqref{eq:thmExtNonPolar} among all convex bodies of revolution for weakly monotone Minkowski valuations of degree $1$. Here, we denote again by $s(K)$ the Steiner point of $K \in \K^n$, see \cite{Schneider2014}*{Sec.~1.7}.
	\begin{thm}
		Suppose that $\Phi:\K^n \rightarrow \K^n$ is a continuous, translation invariant and $\SO(n)$ equivariant, weakly monotone, non-trivial Minkowski valuation of degree $1$. Among convex bodies $K \in \K^n$ of revolution with non-empty interior,
		\begin{align*}
			\frac{V_n(\Phi K)}{V_n(K)}
		\end{align*}
		is minimized by Euclidean balls. If $\Phi K = aK + b(-K) - \alpha s(K)$ for some $a, b \geq 0$ and $\alpha \in \R$, then $K$ is a minimizer if and only if $K$ is centrally symmetric, for $a \neq 0 \neq b$, or $K$ is any convex body of revolution, for $a=0$ or $b=0$. Otherwise, Euclidean balls are the only minimizers.
	\end{thm}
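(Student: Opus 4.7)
The plan is to apply \cref{thm:kiderlenMonMinkEnd} to obtain a Minkowski integral representation for $\Phi K$ as an average of rotations of $K$---crucially exploiting that $K$ is a body of revolution---and then to invoke the Brunn--Minkowski inequality for Minkowski integrals. By \cref{thm:kiderlenMonMinkEnd}, there is a centered, $\SO(n-1)$-invariant measure $\mu$ on $\S^{n-1}$ with $h(\Phi K, \cdot) = h(K, \cdot) \ast \mu$, and $\mu + \alpha \pair{\cdot}{\bar e}\,d\sigma$ is non-negative for some $\alpha \in \R$. The convolution of $h(K, \cdot)$ with the linear measure $\pair{\cdot}{\bar e}\,d\sigma$ is a multiple of $\pair{s(K)}{\cdot}$, so it only translates $\Phi K$ by a multiple of $s(K)$ and leaves $V_n(\Phi K)$ and $\Phi B^n$ unchanged. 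Hence we may replace $\mu$ by its non-negative counterpart and assume $\mu \geq 0$ throughout.

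For a body of revolution $K$ around $\bar e$, I claim that
\begin{equation*}
\Phi K = \int_{\S^{n-1}} \rho_v K\,d\mu(v),
\end{equation*}
where $\rho_v \in \SO(n)$ is any rotation sending $\bar e$ to $v$. Since $K$ is stabilized by $\SO(n-1)$, the body $\rho_v K$ does not depend on the choice of $\rho_v$, and writing $h(K, w) = \bar h(\pair{w}{\bar e})$ gives $h(\rho_v K, u) = h(K, \rho_v^{-1} u) = \bar h(\pair{u}{v})$. Thus the right-hand side has support function $\int \bar h(\pair{u}{v})\,d\mu(v)$. On the other hand, \eqref{eq:ConvSphSphMeas} yields $(h(K,\cdot)\ast \mu)(u) = \int \bar h(\pair{v}{\vartheta_u^{-1}\bar e})\,d\mu(v)$, and since $\vartheta_u^{-1}\bar e$ lies on the same $\SO(n-1)$-orbit as $u$, the invariance of $\mu$ permits replacing $\vartheta_u^{-1}\bar e$ by $u$ inside the integral, confirming the representation. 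The Brunn--Minkowski inequality for Minkowski integrals then gives
\begin{equation*}
V_n(\Phi K)^{1/n} \geq \int_{\S^{n-1}} V_n(\rho_v K)^{1/n}\,d\mu(v) = \mu(\S^{n-1})\, V_n(K)^{1/n},
\end{equation*}
which together with $\Phi B^n = \mu(\S^{n-1})\, B^n$ is precisely the desired bound $V_n(\Phi K)/V_n(K) \geq V_n(\Phi B^n)/V_n(B^n)$.

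For the equality discussion, equality in Brunn--Minkowski for Minkowski integrals forces the bodies $\rho_v K$, $v \in \mathrm{supp}(\mu)$, to be mutually translates. If $\mathrm{supp}(\mu) \subseteq \{\pm\bar e\}$, then $\mu = a\delta_{\bar e} + b\delta_{-\bar e}$ and $\Phi K = aK + b(-K) - \alpha' s(K)$ for a suitable $\alpha' \in \R$; the equality condition reduces to $aK$ and $b(-K)$ being translates, giving central symmetry when $ab > 0$ and no restriction when $a = 0$ or $b = 0$. Otherwise, the $\SO(n-1)$-invariance of $\mu$ guarantees that $\mathrm{supp}(\mu)$ contains an entire latitude distinct from $\{\pm\bar e\}$, on which one picks $v_1, v_2$ with $v_2 \neq \pm v_1$; the rotation $\sigma := \rho_{v_1}^{-1}\rho_{v_2}$ then sends $\bar e$ to a point other than $\pm\bar e$, and the translation-equivalence $\sigma K = K + c$ forces $K - s(K)$ to be invariant under $\sigma$, hence (together with $\SO(n-1)$) under the full closed subgroup they generate in $\SO(n)$. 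Since the only closed subgroup of $\SO(n)$ strictly containing the axis-stabilizer is $\SO(n)$ itself, $K - s(K)$ is $\SO(n)$-invariant and $K$ must be a Euclidean ball. The main obstacle is the verification of the Minkowski integral representation, where the $\SO(n-1)$-invariance of $\mu$ must be used carefully to identify the Kiderlen convolution with the intrinsic rotational average of $K$.
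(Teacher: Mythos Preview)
Your argument is correct and takes a different route from the paper. Both start from Kiderlen's representation and reduce to a non-negative measure, but instead of your Minkowski-integral representation $\Phi K = \int \rho_v K\,d\mu(v)$ together with the integral Brunn--Minkowski inequality, the paper exploits the commutativity $h(K,\cdot)\ast\widetilde\mu = \widetilde\mu\ast h(K,\cdot)$ (valid since $h(K,\cdot)$ is zonal) to write $V_n(\Phi K) = \int V(\vartheta_v K, \Phi K,\dots,\Phi K)\,d\widetilde\mu(v)$ and then applies Minkowski's first inequality termwise. The two are essentially equivalent; your version is more geometric, while the paper's yields the equality condition $\Phi K = \vartheta_v K$ for all $v \in \mathrm{supp}\,\widetilde\mu$ directly and then finishes the non-discrete case with a short support-function computation ($h(\vartheta_u K, v) = h(\vartheta_u K, w)$ for all $u$ and all $v,w \in \mathrm{supp}\,\widetilde\mu$, which contains a full sub-sphere), avoiding any group-theoretic classification.

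One imprecision in your equality analysis: the claim that ``the only closed subgroup of $\SO(n)$ strictly containing the axis-stabilizer is $\SO(n)$ itself'' is false as stated, since $\mathrm{S}(\OO(1)\times\OO(n-1))$, the stabilizer of the pair $\{\pm\bar e\}$, lies strictly between $\SO(n-1)$ and $\SO(n)$. Your argument survives because your $\sigma$ satisfies $\sigma\bar e \notin \{\pm\bar e\}$ and is therefore not in this intermediate group; the closed subgroups between $\SO(n-1)$ and $\SO(n)$ are exactly $\SO(n-1)$, $\mathrm{S}(\OO(1)\times\OO(n-1))$, and $\SO(n)$, so the group generated by $\SO(n-1)$ and $\sigma$ must be everything. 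You should either cite this classification or, more simply, replace the group argument by the paper's direct support-function computation.
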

	\begin{proof}
		By \cref{thm:kiderlenMonMinkEnd}, there exists an $\SO(n-1)$ invariant measure $\mu = \widetilde{\mu} + \alpha \pair{u}{\bar e}du$ on $\S^{n-1}$ such that $\widetilde{\mu}$ is non-negative and $\alpha \in \R$. As $h(K, \cdot) \ast (\pair{u}{\bar e}du)$ yields a constant multiple of the Steiner point, we therefore obtain
		\begin{align*}
			V_n(\Phi K) = \frac{1}{n} \int_{\S^{n-1}} (h(K, \cdot) \ast \widetilde{\mu})(u) dS_{n-1}(\Phi K, u)
		\end{align*}
		 for every $K \in \K^n$ with non-empty interior. If $K$ is additionally a body of revolution (without loss of generality we may assume that the axis is $\bar e$), then $h(K, \cdot)$ is $\SO(n-1)$ invariant. As trivially $\widehat{f} = f$ for every $\SO(n-1)$ invariant $f$, 
		 \begin{align*}
		 	h(K, \cdot) \ast \widetilde{\mu} = \widehat{h(K, \cdot)} \ast \widehat{(\widetilde{\mu})} = \widetilde{\mu} \ast h(K, \cdot),
		 \end{align*}
	 	 and thus, by Fubini's theorem and \eqref{eq:ConvSphMeasSph},
	 	 \begin{align*}
	 	 	V_n(\Phi K) &= \frac{1}{n}\int_{\S^{n-1}} \int_{\S^{n-1}} h(K, \vartheta_v^{-1}u)dS_{n-1}(\Phi K, u) d\widetilde{\mu}(v) \\
	 	 	&= \int_{\S^{n-1}} V(\vartheta_vK, \Phi K, \dots, \Phi K) d\widetilde{\mu}(v),
	 	 \end{align*}
 	 	 where $\Phi K$ appears $(n-1)$-times in the mixed volume. Applying Minkowski's inequality (see, e.g., \cite{Schneider2014}*{Ch.~7}), we obtain
	 	 \begin{align}\label{eq:prfMinkEndosZonalMinkIneq}
	 		V_n(\Phi K) \geq \widetilde{\mu}(\S^{n-1}) V_n(K)^{\frac{1}{n}} V_n(\Phi K)^{\frac{n-1}{n}}.
 		\end{align}
 		Since $\Phi B^n = \widetilde{\mu}(\S^{n-1})B^n$, the claimed inequality follows.
 		
 		For the equality cases, first note that we may assume that the Steiner points of $K$ (by translation-invariance) and $\Phi K$ (this is always true, see \cite{Kiderlen2006}) lie at the origin and that $\widetilde{\mu}$ is a probability measure (by scaling invariance). Inequality~\eqref{eq:prfMinkEndosZonalMinkIneq} therefore reads $V_n(\Phi K) \geq V_n(K)$ and the equality cases of Minkowski's inequality imply that $\Phi K = \lambda_v \vartheta_v K + x_v$ for $\widetilde{\mu}$-almost all $v \in \S^{n-1}$, and $\lambda_v > 0, x_v \in \R^n$. By above assumptions, $\lambda_v = 1$ and $x_v = 0$, that is, we have
 		\begin{align}\label{eq:prfMinkEndoZonalCondEq}
 			\Phi K = \vartheta_v K \quad \text{ for $\widetilde{\mu}$-a.e. } v \in \S^{n-1}.
 		\end{align}
 		As both sides are continuous in $v$, the statement holds for all $v \in \mathrm{supp}\, \widetilde{\mu}$.
 	
 		If $\widetilde{\mu}$ is discrete, by $\SO(n-1)$ invariance, it must be of the form $\widetilde{\mu} = a \delta_{\bar e} + b \delta_{-\bar e}$ with $a+b=1$, $a,b \geq 0$. Hence, \eqref{eq:prfMinkEndoZonalCondEq} reads for $a \neq 0 \neq b$
 		\begin{align*}
 			K = \vartheta_{\bar e} K = \Phi K = \vartheta_{-\bar e} = -K,
 		\end{align*}
 		that is, equality holds if and only if $K$ is origin-symmetric. Thus, by translation invariance, centrally symmetric bodies are the only minimizers.
 		
		For $a=0$ or $b=0$, we have that $\Phi K = \pm K$ for every $K \in \K^n$ and $V_n(\Phi K)/V_n(K)$ is constant.
		
		If $\widetilde{\mu}$ is not discrete, \eqref{eq:prfMinkEndoZonalCondEq} implies that $\vartheta_v K = \vartheta_w K$ for all $v,w \in \mathrm{supp}\, \widetilde{\mu}$. Consequently, by the $\SO(n-1)$ invariance of $h(K, \cdot)$,
		\begin{align*}
			h(\vartheta_u K, v) = h(\vartheta_v K, u) = h(\vartheta_w K, u) = h(\vartheta_u K, w), \quad u \in \S^{n-1},
		\end{align*}
		that is, $h(\vartheta_u K, \cdot)$ is constant on $\mathrm{supp}\,\widetilde{\mu}$ for all $u \in \S^{n-1}$. As $K$ is a body of revolution and, by $\SO(n-1)$ invariance, $\mathrm{supp}\,\widetilde{\mu}$ contains at least an (affine) sub-sphere, this is easily seen to imply that $K$ must be a ball, concluding the proof.
	\end{proof}
	\bigskip

	\subsection*{Acknowledgments}
	The authors were supported by the Austrian Science Fund (FWF), \href{https://doi.org/10.55776/P31448}{doi:10.55776/P31448}. The first-named author was additionally supported by the Austrian Science Fund (FWF), \href{https://doi.org/10.55776/P34446}{doi:10.55776/P34446}.
	
	\begingroup
	\let\itshape\upshape
	
	\bibliographystyle{abbrv}
	\bibliography{references}{}
	
	\endgroup

\end{document}